\documentclass[english]{article}
\title{A Solution Theory for a General Class of SPDEs}
\author{Andr\'e S{\"u}\ss{}\footnote{unaffiliated}\phantom{aa}\!\!and Marcus Waurick\footnote{Corresponding author, University of Bath, Claverton Down, Department of Mathematical Sciences, BA2 7AY, Bath, UK, m.waurick@bath.ac.uk, +44-1225-38-5483.}}
\date{Version of \today}

%Usepackages
\usepackage[utf8]{inputenc}
\usepackage{mathtools}
\usepackage{babel}
\usepackage{url}
\usepackage{a4wide}
\usepackage{xcolor}
\usepackage{amsmath,amsthm,amssymb,amsxtra}
\usepackage{mathrsfs}
\usepackage{enumitem}

\let\geq\geqslant
\let\leq\leqslant

\allowdisplaybreaks

%Definitions
\newcommand*{\scrA}{\ensuremath{\mathscr{A}}}
\newcommand*{\scrF}{\ensuremath{\mathscr{F}}} %sigma-field
 %sigma-field of null sets

	%Borel sets
	%space of continuous functions
	%simple functions/processes
	%Fourier transform
	%Hilbert space
	%random variable with normal distribution
	%Domain of stochastic integral
	%Schwarz functions
									%domain of Malliavin derivative

\newcommand*{\N}{\mathbb{N}}									%natural numbers
\newcommand*{\R}{\mathbb{R}}									%real numbers
\newcommand*{\C}{\mathbb{C}}									%complex numbers
\newcommand*{\Z}{\mathbb{Z}}									%entire numbers
\newcommand*{\bfS}{\mathbf{S}}									%entire numbers
\newcommand*{\Rd}{{\mathbb{R}^d}}							%R^d 

\newcommand*{\1}{\chi}									%natural numbers

\newcommand*{\eps}{\varepsilon}								%epsilon
\newcommand*{\ii}{\mathrm{i}}

\newcommand*{\E}{\mathbb{E}}									%expectation
\renewcommand*{\P}{\mathbb{P}}								%probability measure

							%indicator function

\DeclareMathAccent{\Circ}{\mathalpha}{operators}{"17}
\newcommand{\interior}[1]{\Circ{#1}}
\DeclareMathOperator*{\dom}{dom}
\DeclareMathOperator*{\ran}{ran}
\DeclareMathOperator*{\kar}{ker}
						%support
						%distance
						%distance
									%identity operator
\DeclareMathOperator{\lin}{lin}
\DeclareMathOperator{\spt}{spt}
\renewcommand*{\div}{\mathrm{div}}
\DeclareMathOperator{\grad}{grad}
\DeclareMathOperator{\curl}{curl}

%Options for numbering
\numberwithin{equation}{section}

\theoremstyle{plain}
\newtheorem{lem}{Lemma}[section]
\newtheorem{thm}[lem]{Theorem}
\newtheorem{prop}[lem]{Proposition}
\newtheorem{cor}[lem]{Corollary}

\theoremstyle{definition}
\newtheorem{defn}[lem]{Definition}

\theoremstyle{remark}
\newtheorem{rem}[lem]{Remark}
\newtheorem{ex}[lem]{Example}

%Special 
\definecolor{rot}{rgb}{1,0,0}

\begin{document}
\maketitle

\begin{abstract}
	In this article we present a way of treating stochastic partial differential equations with multiplicative noise by rewriting them as stochastically perturbed evolutionary equations in the sense of \cite{picardbook}, where a general solution theory for deterministic evolutionary equations has been developed. This allows us to present a unified solution theory for a general class of SPDEs which we believe has great potential for further generalizations. We will show that many standard stochastic PDEs fit into this class as well as many other SPDEs such as the stochastic Maxwell equation and time-fractional stochastic PDEs with multiplicative noise on sub-domains of $\Rd$. The approach is in spirit similar to the approach in \cite{dapratozabczyk}, but complementing it in the sense that it does not involve semi-group theory and allows for an effective treatment of coupled systems of SPDEs. In particular, the existence of a (regular) fundamental solution or Green's function is not required.
\end{abstract}

\textbf{2010 MSC:} Primary 60H15, 35R60, Secondary 35Q99, 35F46

\vspace*{0.125cm}

\textbf{Keywords:} stochastic partial differential equations, evolutionary equations, stochastic equations of mathematical physics, weak solutions

\section*{Acknowledgments}

The authors wish to thank Markus Kunze for very useful correspondence. A.~S.~was supported by the grant MTM 2012-31192 from the Direcci\'on General de Investigaci\'on Cient\'ifica y T\'ecnica, Ministerio de Econom\'ia y Competitividad, Spain. M.~W.~carried out this work with financial support of the EPSRC grant EP/L018802/2: ``Mathematical foundations of metamaterials: homogenisation, dissipation and operator theory''. This is gratefully acknowledged. We thank an anonymous referee for helpful comments und remarks, which helped to improve the manuscript in many respects.

\section{Introduction}
The study of stochastic partial differential equations (SPDEs) attracted a lot of interest in the recent years, with a wide range of equations already been investigated. A common theme in the study of these equations is to attack the problem of existence and uniqueness of solutions to SPDEs by taking solution approaches from the deterministic setting of PDEs and applying them to a setting that involves a stochastic perturbation. Examples for this are the random-field approach that uses the fundamental solution to the associated PDE in \cite{walsh,dalang,conusdalang}, the semi-group approach which treats evolution equations in Hilbert/Banach spaces via the semi-group generated by the differential operator of the associated PDE, see \cite{dapratozabczyk} or \cite{Kunze2011,Kunze2012} for a treatise, and the variational approach which involves evaluating the SPDE against test functions, which corresponds to the concept of weak solutions of PDEs, see \cite{sanzvuillermot,rozowskii,prevotroeckner}. 

In this article we aim to transfer yet another solution concept of PDEs to the case when the right-hand side of the PDE is perturbed by a stochastic noise term. This solution concept, see \cite{picardbook} for a comprehensive study and \cite{Waurick2014MMAS_Non,PTWW,Trostorff_nonl} for possible generalizations, is of operator-theoretic nature and takes place in an abstract Hilbert space setting. Its key features are establishing the time-derivative operator as a normal, continuously invertible operator on an appropriate Hilbert space and a positive definiteness constraint on the partial differential operator of the PDE (realized as an operator in space-time). Actually, this solution theory is a general recipe to solve a first-order (in time and space) system of coupled equations, and when solving a higher-order (S)PDE, it gets reduced to such a first-order system. In this sense the solution theory we will apply is roughly similar in spirit to the treatment of hyperbolic equations in \cite{hormander, kumano-go}, see also \cite{ascanellisuess} for an application to SPDEs. 

We shall illustrate the class of SPDEs we will investigate using this approach. Throughout this article let $H$ be a Hilbert space, that we think of as basis space for our investigation. We assume $A$ to be a skew-self-adjoint, unbounded linear operator on $H$ (i.e.\ $\ii A$ is a self-adjoint operator on $H$) which is thought of as containing the spatial derivatives. Furthermore, we denote by $\partial_0$ the time-derivative operator that will be constructed as a normal and continuously invertible operator in Section \ref{sec:partial0}. In particular, it can be shown that the spectrum of $\partial_0^{-1}$ is contained in a ball of the right half plane touching $0\in \mathbb{C}$.  Let for some $r>0$, $M:B(r,r)\to L(H)$ be an analytic function, where $B(r,r)$ is the open ball in $\mathbb{C}$ with radius $r>0$ centered at $r>0$, and $L(H)$ the set of bounded linear operators on $H$. Then one can define via a functional calculus the linear operator $M(\partial_0^{-1})$ as a function of the inverse operator $\partial_0^{-1}$, which will be specified below. The idea to define this operator is to use the Fourier--Laplace transform as explicit spectral 
representation as multiplication operator for $\partial_0$ yielding a functional calculus for both $\partial_0$ and its inverse. The role that $M(\partial_0^{-1})$ plays is coupling the equations in the first-order system. In applications, $M(\partial_0^{-1})$ also contains the information about the `constitutive relations' or the `material law'.

Throughout this article we consider the following (formal) system of coupled SPDEs
\begin{equation}\label{eq:SPDE}
	\big(\partial_0M(\partial_0^{-1})+A\big)u(t) = (B(u))(t)+\int_0^t \sigma(u(s))dW(s),
\end{equation}
subject to suitable initial conditions, where $u(t)$ admits values in a Hilbert space $H$, $\sigma$ and $B$ are Lipschitz-continuous (in some suitable norms) functions and $W$ is a cylindrical $G$-valued Wiener process for some separable Hilbert space $G$ (possibly different from $H$). Though seeming to represent first-order equations, only, it is possible to handle for instance the wave (or heat) equation with \eqref{eq:SPDE} as well, see below. Moreover, note that $M(\partial_0^{-1})$ is an operator acting in space-time and $\partial_0M(\partial_0^{-1})$ is the composition of time differentiation and the application of the operator $M(\partial_0^{-1})$.

We emphasize that in the formulation of \eqref{eq:SPDE}, $A$ does not admit the usual form of stochastic evolution equations as, for instance, in \cite{dapratozabczyk}. Furthermore, \eqref{eq:SPDE} should not be thought of being of a similar structure as the problems discussed in \cite{Gerencser2015,Lototsky2013}. In fact, the coercitivity is encoded in $M(\partial_0^{-1})$ rather than $A$.

In equation \eqref{eq:SPDE} possible boundary conditions are encoded in the domain of the (partial differential) operator $A$. The way of dealing with this issue will also be further specified below. The main achievement of this article is the development of a suitable functional analytic setting for the class of equations \eqref{eq:SPDE}, which allows us to discuss well-posedness issues of this class of equations, that is, existence, uniqueness and continuous dependence of solutions on the input data. 

Now we comment on a notational unfamiliarity in equation \eqref{eq:SPDE}. Note that in \eqref{eq:SPDE} a stochastic integral appears on the right-hand side of the equation instead of the more familiar formal product $\sigma(u(t))\dot W(t)$. We stress here that we are \emph{not} aiming at solving a different class of equations, but in fact we deal with a more general formulation of the common way to write an SPDE. Let us illustrate this point using two common examples, the stochastic heat equation and the stochastic wave equation. The former is usually expressed in the classic formulation in the following way
\[
   d u(t)= (\Delta + b(u(t)))dt+\sigma(u(t))d W(t),
\]
or -- formally dividing by $dt$ --
\[ \bigg(\frac{\partial}{\partial t}-\Delta\bigg)u(t) = b(u(t))+\sigma(u(t))\dot W(t), \]
where $\Delta=\div\grad$ is the Laplacian on the Euclidean space $\R^d$ with $d\in\N$, and $b,\sigma$ are linear or nonlinear mappings on some Hilbert space, for instance some $L^2$-space over $\R^d$. See \cite[Chapter 7]{dapratozabczyk} for more details on this formulation. We can reformulate this equation as a first-order system using the formal definition $v\coloneqq -\grad\partial_0^{-1}u$, where $\partial_0^{-1}$ denotes the inverse of the time derivative operator briefly mentioned above. Then the stochastic heat equation becomes
\[\left(\partial_0\begin{pmatrix} 0 & 0 \\ 0 & 1\end{pmatrix} + \begin{pmatrix} 1 & 0 \\ 0 & 0\end{pmatrix} + \begin{pmatrix} 0 & \div \\ \grad & 0\end{pmatrix} \right)\begin{pmatrix} u \\ v\end{pmatrix} = \begin{pmatrix} \partial_0^{-1}b(u) + \partial_0^{-1}(\sigma(u)\dot W) \\ 0\end{pmatrix}. \]
Thus, with $(\partial_0^{-1}b(u),0)=B(u)$ and if we interpret the term  $\partial_0^{-1}(\sigma(u)\dot W)$ as a stochastic integral, we immediately arrive at \eqref{eq:SPDE}. So the operator-valued function $M$ and the operator $A$ in \eqref{eq:SPDE} respectively equal
\[ M(z) = \begin{pmatrix} z & 0 \\ 0 & 1\end{pmatrix}\text{ and }A = \begin{pmatrix} 0 & \div \\ \grad & 0\end{pmatrix}. \]
Indeed, with these settings, we get
\[
 \partial_0M(\partial_0^{-1})+A = \partial_0\begin{pmatrix} \partial_0^{-1} & 0 \\ 0 & 1\end{pmatrix}+\begin{pmatrix} 0 & \div \\ \grad & 0\end{pmatrix}= \partial_0\begin{pmatrix} 0 & 0 \\ 0 & 1\end{pmatrix}+\begin{pmatrix} 1 & 0 \\ 0 & 0\end{pmatrix}+\begin{pmatrix} 0 & \div \\ \grad & 0\end{pmatrix}.
\]
In particular, $H=L^2(\lambda_{\mathbb{R}^d} )^{d+1}$, where $\lambda_{\mathbb{R}^d} $ denotes the Lebesgue measure on $\mathbb{R}^d$.

In a similar fashion, one can reformulate the stochastic wave equation, which in the classic formulation is given by 
\[ \bigg(\frac{\partial^2}{\partial t^2}-\Delta\bigg)u(t) = b(u(t))+\sigma(u(t))\dot W(t), \]
by using $v=-\grad \partial_0^{-1}u$ as
\[ \left(\partial_0\begin{pmatrix} 1 & 0 \\ 0 & 1\end{pmatrix} + \begin{pmatrix} 0 & \div \\ \grad & 0\end{pmatrix} \right)\begin{pmatrix} u \\ v\end{pmatrix} = \begin{pmatrix} \partial_0^{-1}b(u) + \partial_0^{-1}(\sigma(u)\dot W) \\ 0\end{pmatrix}, \]
where here
\[ M(z) = \begin{pmatrix} 1 & 0 \\ 0 & 1\end{pmatrix}. \]
In comparison to the example of the stochastic heat equation, this formulation in terms of a first-order system is already well-known and heavily used. The main advantage of the formulation \eqref{eq:SPDE} is that many more examples of PDEs in mathematical physics can be written in this form, see \cite{PicPhy}. The hand-waving arguments handling $\partial_0^{-1}$ that we have used in the reduction to first-order systems will be made rigorous in Section \ref{sec:deterministic}.

This paper is structured in the following way. In Section \ref{sec:deterministic} we present a brief overview over the solution theory for PDEs which will be used in this article, in particular we explain the construction of the time derivative operator and the concept of so-called Sobolev chains. 
We state the results and sketch the respective proofs referring to \cite{picardbook} for the details and highlight some further generalizations. 
In the subsequent Section \ref{sec:stochastics} we show how the solution theory for deterministic PDEs carries over to the case of SPDEs which we think of as random perturbations of PDEs. We clarify the way how to interpret the stochastic integral, and then present a solution theory to SPDEs with additive and multiplicative noise. 
In Section \ref{sec:examples} we show using concrete examples how this solution theory can be successfully applied to concrete SPDEs, some of which -- to the best of our knowledge -- have not yet been solved in this level of generality.  We conclude Section \ref{sec:examples} with a SPDE of mixed type, that is, an equation which is hyperbolic, parabolic and elliptic on different space-time regions. This demonstrates the versatility of the approach presented as for instance the semi-group method fails to work in this example for there is no semi-group to formulate the (non-homogeneous) Cauchy problem in the first place.
Furthermore, we provide some connections of this new solution concept to some already known approaches to solve SPDEs. More precisely, we draw the connection of variational solutions of the heat equation to the solutions obtained here. Further, for the stochastic wave equation, we show that the mild solution derived via the semi-group method coincides with the solution constructed in this exposition. We summarize our findings in Section \ref{s:con}.

In this article we denote the identity operator by $1$ or by $1_H$ and indicator functions by $\chi_K$ for some set $K$. The Lebesgue measure on a measurable subset ${D} \subseteq \mathbb{R}^d$ for some $d\in \mathbb{N}$ will be denoted by $\lambda_{D}$. All Hilbert spaces in this article are endowed with $\mathbb{C}$ as underlying scalar field. $L^2$-spaces of (equivalence classes of) scalar-valued square integrable functions over a measure space $(\Omega,\mathcal{A},\mu)$ are denoted by $L^2(\mu)$. The corresponding space of Hilbert space $H$-valued $L^2$-functions will be denoted by $L^2(\mu;H)$. $\mathbb{P}$ will always denote a probability measure.

\section{The deterministic solution theory}\label{sec:deterministic}
In this section we will review the solution theory for a class of linear partial differential equations developed in \cite[Chapter 6]{picardbook} or \cite{PicPhy}.  This solution theory of partial differential equations relies on 2 main observations: (1) to establish the time-derivative operator as a normal and continuously invertible operator on an appropriate Hilbert space and (2) a positive definiteness constraint on the partial differential operators realized as operators in space-time.

\subsection{Functional analytic ingredients}\label{sec:partial0}

Let throughout this article $\nu>0$. This is a free parameter which controls the growth of solutions to PDEs for large times. Consider the space
\[
   H_{\nu,0}(\R)\coloneqq \{ f\in L^2_{\text{loc}}(\lambda_\R); \left(x\mapsto e^{-\nu x}f(x) \right)\in L^2(\lambda_\R) \}
\]
of $L^2$-functions with respect to the exponentially weighted Lebesgue measure $\exp(-2\nu (\cdot))\lambda_\R$. The latter space becomes a Hilbert space if endowed with the scalar product
\begin{align*}
  \langle\cdot,\cdot\rangle_{\nu,0} \colon H_{\nu,0}(\R) \times  H_{\nu,0}(\R) &\to \C,\\
(f,g) &\mapsto \int_{\R} f(x)^*g(x)e^{-2\nu x} d x,
\end{align*}
where $^*$ denotes complex conjugation. Note that the operator $\exp(-\nu m)$ given by
\begin{equation}\label{e:exp}
  \exp(-\nu m)\colon H_{\nu,0}(\R)\to L^2(\lambda_\R),f\mapsto e^{-\nu \cdot}f(\cdot)
\end{equation}
of multiplying with the function $t\mapsto e^{-\nu t}$ is unitary from $H_{\nu,0}(\mathbb{R})$ to $H_{0,0}(\mathbb{R})(=L^2(\lambda_\mathbb{R}))$.

Define $\dom(\partial_{0,\nu})\coloneqq \{ f\in H_{\nu,0}(\R); f'\in H_{\nu,0}(\R)\}$, where $f'$ is the distributional derivative of $f\in L^1_{\text{loc}}(\lambda_\R)$, and
\begin{equation}\label{e:der}
   \partial_{0,\nu} \colon \dom(\partial_{0,\nu}) \subseteq H_{\nu,0}(\R) \to H_{\nu,0}(\R), f\mapsto f'.
\end{equation}
Then this operator has the following properties, see also \cite[Corollary 2.5]{Kalauch}.

\begin{lem}\label{lem:2.1}
  $\partial_{0,\nu}$ is a continuously invertible linear operator with $\lVert \partial_{0,\nu}^{-1}\rVert\leq \frac{1}{\nu}$ and $\Re \partial_{0,\nu}=\nu$.
\begin{proof}
Recall $\exp(-\nu m)$ from \eqref{e:exp} is unitary. By the product rule we deduce the equality
\begin{equation}\label{eq:pdnu}
   \partial_{0,\nu} = \exp(-\nu m)^{-1}(\partial +\nu)\exp(-\nu m),
\end{equation}
where $\partial\colon H^1(\R)\subseteq L^2(\lambda_\R)\to L^2(\lambda_\R)$ is the (usual) distributional derivative operator realized in $L^2(\lambda_\R)$.
Indeed, for a smooth compactly supported function $\phi$, we observe that
\begin{align*}
   &\left(\exp(-\nu m)^{-1}(\partial +\nu)\exp(-\nu m)\phi\right)(x)
   \\& =\exp(\nu x)\left((\partial +\nu)\exp(-\nu m)\phi\right)(x)
   \\& =\exp(\nu x)(-\nu e^{-\nu x}\phi(x)+e^{-\nu x}\phi'(x) +\nu\exp(-\nu x)\phi(x)) = \phi'(x) = \partial_{0,\nu}\phi(x)
\end{align*}
Since $\partial$ is skew-self-adjoint in $L^2(\lambda_\R)$ (\cite[Chapter V, Example 3.14]{Kato1980}), the spectrum of $\partial$ lies on the imaginary axis. Hence, the operator $\partial+\nu$ is continuously invertible. By \eqref{eq:pdnu}, the operators $\partial+\nu$ and $\partial_{0,\nu}$ are unitarily equivalent. Thus, the operator $\partial_{0,\nu}$ is continuously invertible as well. The norm estimate also follows from \eqref{eq:pdnu} as so does the formula $\Re\partial_{0,\nu}=\nu$ since $\Im(\partial_{0,\nu})=\exp(-\nu m)^{-1}((-i)\partial)\exp(-\nu m)$, by the skew-self-adjointness of $\partial$.
\end{proof}
\end{lem}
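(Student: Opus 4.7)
The plan is to establish the lemma by reducing to the well-known properties of the distributional derivative $\partial$ on $L^2(\lambda_\R)$ via the unitary multiplication operator $U \coloneqq \exp(-\nu m)$. The key identity to nail down is the similarity
\[
\partial_{0,\nu} = U^{-1}(\partial + \nu)U,
\]
as an equality of unbounded operators (including domains). Once this is established, everything else is a short chain of standard facts.

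First I would verify the identity formally on test functions $\phi \in C_c^\infty(\R)$ using the Leibniz rule, exactly as indicated in the excerpt: $U\phi$ is Schwartz, so $(\partial + \nu)U\phi$ makes sense classically, and a direct computation gives $U^{-1}(\partial + \nu)U\phi = \phi'$. To upgrade this to an operator identity, I would use the product rule for weak derivatives (multiplication by the smooth function $e^{\pm \nu x}$ preserves local integrability and commutes with distributional differentiation up to the extra $\nu$-term): this shows $f \in \dom(\partial_{0,\nu})$ iff $Uf \in H^1(\R) = \dom(\partial)$, and in that case $\partial_{0,\nu}f = U^{-1}(\partial + \nu)Uf$. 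Hence the two domains agree and the operators coincide.

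With the similarity in hand, I would invoke skew-self-adjointness of $\partial$ on $L^2(\lambda_\R)$ (as cited from Kato) to conclude $\sigma(\partial) \subseteq i\R$. Therefore $\partial + \nu$ is bijective from $H^1(\R)$ onto $L^2(\lambda_\R)$, continuously invertible, with norm bound $\|(\partial+\nu)^{-1}\| \leq 1/\nu$ (standard resolvent estimate for skew-self-adjoint operators: for $u \in H^1(\R)$, $\Re\langle (\partial+\nu)u, u\rangle = \nu\|u\|^2$, giving $\|(\partial + \nu)u\| \geq \nu\|u\|$). Unitary equivalence then transfers both continuous invertibility and the norm estimate verbatim to $\partial_{0,\nu}$.

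For the formula $\Re \partial_{0,\nu} = \nu$, I would simply rewrite $\partial_{0,\nu} = U^{-1}\partial U + \nu \cdot 1_{H_{\nu,0}(\R)}$. Since $U$ is unitary, $U^{-1}\partial U$ is again skew-self-adjoint on $H_{\nu,0}(\R)$, so its real part (in the sense of the symmetric part of a densely defined operator) vanishes, leaving $\Re \partial_{0,\nu} = \nu$ on $\dom(\partial_{0,\nu})$. The main (and only) obstacle I anticipate is the bookkeeping for the domain equality in the similarity identity, i.e.\ making sure the distributional-derivative / multiplication-by-$e^{-\nu x}$ interplay is correctly handled so that $\dom(\partial_{0,\nu})$ is precisely $U^{-1}(\dom(\partial))$; everything else is routine.
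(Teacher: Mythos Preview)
Your proposal is correct and follows essentially the same approach as the paper's own proof: both establish the unitary equivalence $\partial_{0,\nu} = U^{-1}(\partial+\nu)U$ via the product rule on test functions, invoke skew-self-adjointness of $\partial$ (citing Kato) for invertibility and the norm bound, and read off $\Re\partial_{0,\nu}=\nu$ from the decomposition. If anything, you are slightly more explicit about the domain bookkeeping and the resolvent estimate than the paper is.
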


\begin{rem}\label{r:pif} By \cite[Corollary 2.5 (d)]{Kalauch}, we have
\[
   \partial_{0,\nu}^{-1}f(t)=\int_{-\infty}^t f(\tau)d \tau \quad (t\in \mathbb{R})
\]
for all $f\in H_{\nu,0}(\mathbb{R})$.
\end{rem}

Note that for a Hilbert space $H$, there exists a canonical extension of $\partial_{0,\nu}$ to the space $H_{\nu,0}(\R;H)$ of corresponding $H$-valued functions by identifying $H_{\nu,0}(\R;H)$ with $H_{\nu,0}(\R)\otimes H$ and the extension of $\partial_{0,\nu}$ by $\partial_{0,\nu}\otimes 1_H$.

An important tool in this article is the (Hilbert space valued) Fourier transformation \[\mathcal{F}\colon L^2(\lambda_\R;H)\to L^2(\lambda_\R;H)\] defined by the unitary extension of
\[
   \mathcal{F}\phi(x) \coloneqq \frac{1}{\sqrt{2\pi}}\int_\R e^{-\ii xy}\phi(y)dy\quad (x\in \R,\phi\in L^1(\lambda_\R;H)\cap L^2(\lambda_\R;H)),
\]
to $L^2(\lambda_\R;H)$. In fact, the norm preservation is the same as saying that Plancherel's theorem also holds for the Hilbert space valued case. Recall that the inverse Fourier transform satisfies $(\mathcal{F}^{-1}\phi)(x)=(\mathcal{F}^*\phi)(x) = (\mathcal{F}\phi)(-x)$. 

Next, recall (\cite[Volume 1, p.161-163]{Akhiezer1981}) that for the derivative $\partial\colon H^1(\R)\subseteq L^2(\lambda_\R)\to L^2(\lambda_\R)$, the Fourier transformation realizes an explicit spectral representation for $\partial$ as multiplication operator in the Fourier space:
\[
    \partial = \mathcal{F}^* \ii m \mathcal{F},
\]
where $(mf)(x)\coloneqq xf(x)$ denotes the multiplication-by-argument-operator in $L^2(\lambda_\R;H)$. 

We define the \emph{Fourier--Laplace transformation} $\mathcal{L}_\nu \coloneqq \mathcal{F}\exp(-\nu m)$ with $\exp(-\nu m)$ given in \eqref{e:exp}. Then, $\mathcal{L}_\nu$ defines a spectral representation for $\partial_{0,\nu}$ given in \eqref{e:der} (and hence also for $\partial_{0,\nu}^{-1}$). Indeed, we get $\partial_{0,\nu} = \mathcal{L}_\nu^* (\ii m+\nu)\mathcal{L}_\nu$ and
\[
    \partial_{0,\nu}^{-1} = \mathcal{L}_\nu^* \left( \frac{1}{\ii m+\nu}\right)\mathcal{L}_\nu.
\]
The latter formula carries over to \emph{(operator-valued)-functions} of $\partial_{0,\nu}^{-1}$, that is, we set up a functional calculus for $\partial_{0,\nu}^{-1}$. We define
\begin{equation}\label{eq:defM}
  M(\partial_{0,\nu}^{-1})\coloneqq \mathcal{L}_\nu^* M\left( \frac{1}{\ii m+\nu}\right)\mathcal{L}_\nu,
\end{equation}
where $M\colon B(r,r)\to L(H)$ is analytic and bounded, $r>\frac{1}{2\nu}$, as well as for all $x\in \R$ and $\phi\in C_c(\R;H)$
\[
   M\left( \frac{1}{\ii m+\nu}\right)\phi(x)\coloneqq M\left( \frac{1}{ \ii x+\nu}\right)\phi(x).
\]
Note that the right hand side is the application of the bounded linear operator $M\left( \frac{1}{\ii x+\nu}\right)\in L(H)$ to the Hilbert space element $\phi(x)\in H$.

In principle, one could cope with (operator-valued) functions $M$ being defined on $\partial B(r',r')\setminus\{0\}$ with $r'\coloneqq 1/(2\nu)$, only. In fact, \eqref{eq:defM} is still possible. However, in the solution theory to be developed in the next section, we want to establish causality for the solution operator, that is, the solution vanishes up to time $t$ if the data do (see below for the details). But, vanishing up to time $0$ is intimately related to analyticity:

We denote the open complex right half plane by $\C_{>0}=\{ \ii t+\nu; t\in \mathbb{R}, \nu>0\}$.

\begin{thm}[{{Paley--Wiener, cf.\ \cite[Chapter 19]{Rudin1987} and \cite[Corollary 2.7]{PicPhy}}}]\label{thm:PW} Let $H$ be a Hilbert space, $u\in L^2(\lambda_\mathbb{R};H)$. Then the following properties are equivalent:
\begin{enumerate}
 \item $\C_{>0} \ni \ii t+\nu \mapsto (\mathcal{L}_{\nu} u)(t) \in H$ belongs to the \emph{Hardy--Lebesgue} space 
\begin{multline*}
    \mathcal{H}^2(H)\coloneqq \{ f\colon \C_{>0}\to H; f\text{ analytic,}\\ f(\ii \cdot+\nu)\in L^2(\lambda_\mathbb{R};H)\, (\nu>0), \sup_{\nu>0}\|f(\ii \cdot +\nu)\|_{L^2}<\infty\}                                                                                                            
                                                                                                           \end{multline*}
 \item $u=0$ on $(-\infty,0)$.                                                                                                
\end{enumerate} 
\end{thm}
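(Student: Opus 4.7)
My plan is to identify, for $z = \nu + \ii t \in \C_{>0}$, the map in question as a vector-valued Laplace-type integral
\[
    F(z) = \frac{1}{\sqrt{2\pi}} \int_\R e^{-zx} u(x)\, dx
\]
(wherever the integrand is absolutely integrable), using the definition $\mathcal{L}_\nu = \mathcal{F}\exp(-\nu m)$ together with Plancherel's theorem for the Hilbert-space-valued Fourier transform recalled just before the statement. With this identification both implications follow by direct analytic arguments.

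For (ii) $\Rightarrow$ (i), I assume $u = 0$ on $(-\infty, 0)$. Then $x \mapsto \chi_{(0,\infty)}(x)\, e^{-\nu x} u(x)$ lies in $L^1(\lambda_\R;H) \cap L^2(\lambda_\R;H)$ for every $\nu > 0$ by Cauchy--Schwarz, so $F(z) = \frac{1}{\sqrt{2\pi}} \int_0^\infty e^{-zx} u(x)\, dx$ is well defined on $\C_{>0}$ and coincides with $(\mathcal{L}_\nu u)(t)$ on each line $\Re z = \nu$. Strong holomorphy on $\C_{>0}$ follows from a standard dominated-convergence argument applied to the difference quotients: on any compact subset of $\{\Re z \geq \nu_1 > 0\}$ the integrand and its formal $z$-derivative $-x\, e^{-zx} u(x)$ are dominated by multiples of $e^{-\nu_1 x}\|u(x)\|$ and $xe^{-\nu_1 x}\|u(x)\|$ respectively, both of which are integrable on $(0,\infty)$ by Cauchy--Schwarz. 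Plancherel's theorem then yields
\[
    \|F(\ii \cdot + \nu)\|_{L^2}^2 = \|\chi_{(0,\infty)} e^{-\nu \cdot} u\|_{L^2}^2 = \int_0^\infty e^{-2\nu x}\|u(x)\|^2\, dx \leq \|u\|_{L^2}^2
\]
uniformly in $\nu > 0$, so $F \in \mathcal{H}^2(H)$.

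For (i) $\Rightarrow$ (ii), remarkably the analyticity of $F$ is not needed; only the uniform $L^2$ bound on horizontal lines enters. Plancherel's theorem gives
\[
    \int_\R e^{-2\nu x} \|u(x)\|^2\, dx = \|\exp(-\nu m) u\|_{L^2}^2 = \|\mathcal{L}_\nu u\|_{L^2}^2 = \|F(\ii \cdot + \nu)\|_{L^2}^2 \leq C^2
\]
uniformly in $\nu > 0$ for some $C$. Isolating the integral over $(-\infty, 0)$ and applying Fatou's lemma, I note that $e^{-2\nu x} \to \infty$ as $\nu \to \infty$ for each $x < 0$; hence if $u$ were non-zero on a set of positive Lebesgue measure inside $(-\infty, 0)$, then $\liminf_{\nu\to\infty}\int_{-\infty}^0 e^{-2\nu x}\|u(x)\|^2\,dx = +\infty$, contradicting the uniform bound $C^2$. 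Therefore $u = 0$ almost everywhere on $(-\infty, 0)$.

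The only mildly technical point I anticipate is the rigorous verification of strong holomorphy of the vector-valued Laplace integral in the first direction; this is routine via dominated convergence on the difference quotients, or equivalently via the weak-to-strong equivalence of holomorphy in Hilbert spaces after noting that $z \mapsto \langle F(z), h\rangle_H$ is scalar analytic for each $h \in H$ by the classical scalar dominated-convergence argument. Every remaining step reduces to an elementary consequence of Plancherel's theorem together with Fatou's lemma.
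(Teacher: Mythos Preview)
The paper does not supply its own proof of this theorem; it is stated with citations to Rudin, Chapter~19, and to \cite[Corollary~2.7]{PicPhy}, and the text immediately moves on to Sobolev chains. So there is no ``paper's proof'' to compare against in the strict sense.

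Your argument is correct. One remark on the direction (i)$\Rightarrow$(ii): the reason your Fatou argument succeeds without ever invoking analyticity is that the hypothesis here is not merely ``some $F\in\mathcal{H}^2(H)$ has boundary value $\mathcal{F}u$'' but the stronger statement that $F(\ii t+\nu)=(\mathcal{L}_\nu u)(t)$ on \emph{every} vertical line $\Re z=\nu>0$. This already forces $e^{-\nu\cdot}u\in L^2(\lambda_\R;H)$ for all $\nu>0$ with uniformly bounded norm, and your monotone/Fatou argument on $(-\infty,0)$ then finishes. The classical Paley--Wiener route in the cited references instead starts from an abstract $F\in\mathcal{H}^2$, uses analyticity and the uniform $L^2$ bound to produce boundary values as $\nu\to 0^+$, and only then identifies the support of the inverse transform; that machinery is genuinely needed when the link between $F$ and $u$ is given only at the boundary. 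For the formulation in this paper your shortcut is legitimate and more elementary. The (ii)$\Rightarrow$(i) direction is the standard Laplace-integral computation and is handled correctly.
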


We introduce Sobolev chains, which may be needed in the later investigation, see \cite[Chapter 2]{picardbook}, or \cite{PicSoLa}. These concepts are the natural generalizations of Gelfand triples to an infinite chain of rigged Hilbert spaces. We shall also refer to similar concepts developed in \cite{KPY,GelVile} or, more recently, \cite{EngNag}.

\begin{defn}
  Let $C:\dom(C)\subseteq H\to H$ be densely defined and closed. If $C$ is continuously invertible, then we define $H_k(C)$ to be the completion of $(\dom(C^{|k|}),\lVert C^{k}\cdot \rVert_{H})$ for all $k\in\Z$. The sequence $(H_k(C))_k$ is called \emph{Sobolev chain} associated with $C$.
\end{defn}

Obviously, $H_k(C)$ is a Hilbert space for each $k\in\Z$. Moreover, it is possible to extend the operator $C$ unitarily to an operator from $H_k(C)$ to $H_{k-1}(C)$. We will use these extensions throughout and use the same notation. It can be shown that $H_{k}(C^*)^*$ can be identified with $H_{-k}(C)$ via the dual pairing
\[
   H_k(C^*)\times H_{-k}(C) \ni (\phi,\psi)\mapsto \left\langle \left(C^*\right)^k\phi,C^{-k}\psi \right\rangle_H
\]
for all $k\in\Z$, where we identify $H$ with its dual space. Further, note that $H_k\hookrightarrow H_m$ as long as $k\geq m$. Hence, the name ``chain''.

\begin{ex}
(a) A particular example for such operators $C$ is the time-derivative $\partial_{0,\nu}$. We denote $H_{\nu,k}(\R)\coloneqq H_k(\partial_{0,\nu})$ for all $k\in\Z$ and correspondingly for the Hilbert-space-valued case. 

(b) A second important example to be used later on is the case of a skew-self-adjoint operator $A$ in some Hilbert space $H$. We build the Sobolev chain associated with $C=A+1$.
\end{ex}

\subsection{The solution theory}\label{sec:soltheory}

The solution theory which we will apply covers a large class of partial differential equations in mathematical physics. We will summarize it in this section, and for convenience, we shall also provide outlines of the proofs. For the whole arguments, the reader is referred to \cite{picardbook} and \cite{PTWW,Waurick2014MMAS_Non}. The following observation, a variant of coercitivity, provides the functional analytic foundation. 

\begin{lem}\label{lem:pos_def}
Let $G$ be a Hilbert space, $B\colon \dom(B)\subseteq G\to G$ a densely defined, closed, linear operator. Assume there exists $c>0$ with the property that 
\begin{equation}\label{eq:coerc1}
	\Re \langle B\phi,\phi\rangle\geq c\langle \phi,\phi\rangle,
\end{equation}
and
\begin{equation}\label{eq:coerc2}
	\Re \langle B^*\psi,\psi\rangle\geq c\langle \psi,\psi\rangle,
\end{equation}
for all $\phi\in \dom(B)$ and $\psi\in \dom(B^*)$. Then $B^{-1}$ exists as an element of $L(G)$, the space of bounded linear operators on $G$ and $\|B^{-1}\|\leq 1/c$. 
\end{lem}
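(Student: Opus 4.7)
The plan is to derive a two-sided bound on $B$ and $B^*$ first, and then combine surjectivity of $B$ (from density of $\ran(B)$) with a coercive lower bound (giving closed range) to conclude bounded invertibility.

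First I would use Cauchy--Schwarz on \eqref{eq:coerc1}: for every $\phi \in \dom(B)$,
\[
c\|\phi\|^2 \leq \Re\langle B\phi,\phi\rangle \leq |\langle B\phi,\phi\rangle| \leq \|B\phi\|\|\phi\|,
\]
which immediately yields the bound $\|B\phi\| \geq c\|\phi\|$. In particular $B$ is injective and $B^{-1}$ is bounded by $1/c$ on its range $\ran(B)$. The same reasoning applied to \eqref{eq:coerc2} gives $\|B^*\psi\| \geq c\|\psi\|$ for $\psi \in \dom(B^*)$, so $\ker(B^*) = \{0\}$.

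Next I would show $\ran(B) = G$. Since $B$ is densely defined and closed, the standard identity $\ran(B)^\perp = \ker(B^*)$ holds, and combined with the previous step gives $\ran(B)^\perp = \{0\}$, hence $\ran(B)$ is dense in $G$. It remains to prove $\ran(B)$ is closed: given a sequence $(\phi_n)$ in $\dom(B)$ with $B\phi_n \to y$ in $G$, the estimate $\|B(\phi_n - \phi_m)\| \geq c\|\phi_n - \phi_m\|$ shows $(\phi_n)$ is Cauchy, hence convergent to some $\phi$. Closedness of $B$ then yields $\phi \in \dom(B)$ and $B\phi = y$, so $y \in \ran(B)$.

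Putting these facts together, $B\colon \dom(B) \to G$ is a bijection with $\|B^{-1}y\| \leq (1/c)\|y\|$ for $y \in \ran(B) = G$, so $B^{-1} \in L(G)$ with the claimed norm bound. No real obstacle is anticipated; the only thing to be slightly careful about is invoking $\ker(B^*) = \ran(B)^\perp$, which requires $B$ to be densely defined and closed — both of which are hypotheses.
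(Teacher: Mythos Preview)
Your proof is correct and follows essentially the same approach as the paper: both derive the lower bound $\|B\phi\|\geq c\|\phi\|$ via Cauchy--Schwarz, use the analogous bound for $B^*$ to obtain $\ker(B^*)=\{0\}$ and hence density of $\ran(B)$, and combine this with closedness of $\ran(B)$ (via the Cauchy-sequence argument and closedness of $B$) to conclude surjectivity and the norm estimate. The only difference is the order in which the steps are presented.
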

\begin{proof}
Using the Cauchy-Schwarz-inequality, we can read off from the first inequality \eqref{eq:coerc1} that $B$ is one-to-one. More precisely, we have for all $\phi\in \dom(B)$
 \begin{equation}\label{eq:inequality}
    c\|\phi\|\leq \|B\phi\|.
 \end{equation}
Thus, $B^{-1}$ is well-defined on $\ran(B)$, the latter being a closed subset of $G$. In fact, take $(\psi_n)_n$ in $\ran(B)$ converging to some $\psi\in G$. We find $(\phi_n)_n$ in $\dom(B)$ with $B\phi_n=\psi_n$. Then, again relying on the inequality involving $B$, we get
 \[
    c\| \phi_n -\phi_m\|\leq \| B\phi_n - B\phi_m\|=\| \psi_n - \psi_m\|\quad (n,m\in \N),
 \]
 which shows that $(\phi_n)_n$ is a Cauchy-sequence in $G$, and, thus, convergent to some $\phi\in G$. The closedness of $B$ gives that $\phi\in \dom(B)$ and $B\phi=\psi\in \ran(B)$ as desired. 
 
Next, again by the Cauchy-Schwarz inequality, we deduce that also $B^*$ is one-to-one, or expressed differently $\kar(B^*)=\{0\}$. Thus, by the projection theorem, $G=\kar(B^*)\oplus \overline{\ran(B)}=\{0\}\oplus \ran(B)$ yielding that $B$ is onto. The inequality for the norm of $B^{-1}$ can be read off from \eqref{eq:inequality} by setting $\phi := B^{-1}g$ for any $g\in \ran(B)=G$:
\[ c\|B^{-1}g\| \leq \|BB^{-1}g\| = \|g\|. \]
This finishes the proof.
\end{proof}

\begin{rem}\label{rem:canext}
Given a densely defined closed linear operator $A_0:\dom(A_0)\subseteq H\to H$, there exists a closed, densely defined (canonical) extension $A$ to $H_{\nu,0}(\R;H)$ in the way that $(Au)(t)\coloneqq A_0u(t)$ for $t\in\R$ and $u\in C_c (\R;\dom(A_0))$. Indeed, the construction can be done similarly to the extension of the time-derivative by setting $A\coloneqq 1_{H_{\nu,0}(\R)}\otimes A_0$. Then, if $A_0$ is continuously invertible, then so is $A$. The adjoint of $A$ is the extension of the adjoint of $A_0$. Due to these similarities there is little use in distinguishing notationally $A_0$ from its extension $A$. Hence, we will use throughout the same notation for $A_0$ and its extension.
\end{rem} 

The next result is the main existence and uniqueness theorem in the deterministic setting.

\begin{thm}[{\cite[Theorem 6.2.5]{picardbook}, \cite[Solution Theory]{PicPhy}}]\label{thm:solth}
Let $H$ be a Hilbert space, $A:\dom(A)\subseteq H\to H$ a skew-self-adjoint linear operator. For some $r>0$, let $M\colon B(r,r)\to L(H)$ be a bounded and analytic mapping. Assume that there exists $c>0$ such that
\begin{equation}\label{eq:PDofM}
  \Re \langle z^{-1} M(z )\phi,\phi\rangle_H  \geq c \|\phi\|_H^2 \quad (z\in B(r,r),\, \phi\in H).
\end{equation}
Then for all $\nu> 1/(2r)$ the operator
\[
   \partial_{0,\nu}M(\partial_{0,\nu}^{-1})+A \colon \dom(A)\cap \dom(\partial_{0,\nu}) \subseteq  H_{\nu,0}(\R;H) \to H_{\nu,0}(\R;H)
\]
is closable with continuously invertible closure. Denoting $S_\nu$ to be the inverse of the closure, 
\[ S_\nu := \big(\overline{\partial_{0,\nu}M(\partial_{0,\nu}^{-1})+A}\big)^{-1}, \]
we get that $S_\nu$ is \emph{causal}, that is, for $a\in \R$ and $f,g\in H_{\nu,0}(\R;H)$ the implication
\begin{equation}\label{eq:defa}
  f=g\text{ on }(-\infty,a) \Rightarrow S_\nu f = S_\nu g \text{ on }(-\infty,a)
\end{equation}
holds true. Moreover, $\|S_\nu\|\leq c^{-1}$.
\end{thm}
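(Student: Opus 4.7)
The plan is to apply Lemma \ref{lem:pos_def} to (the closure of) the operator $B_0:=\partial_{0,\nu}M(\partial_{0,\nu}^{-1})+A$ defined on the core $\dom(A)\cap\dom(\partial_{0,\nu})$, which amounts to establishing the coercivity estimates \eqref{eq:coerc1} and \eqref{eq:coerc2} with the constant $c$ of \eqref{eq:PDofM}. The key technical tool is the Fourier--Laplace spectral representation: using \eqref{eq:defM} and $\partial_{0,\nu}=\mathcal{L}_\nu^*(\ii m+\nu)\mathcal{L}_\nu$, the operator $\partial_{0,\nu}M(\partial_{0,\nu}^{-1})$ becomes, on $\dom(\partial_{0,\nu})$, the multiplication with the operator-valued symbol $t\mapsto(\ii t+\nu)M(1/(\ii t+\nu))$ in the Fourier--Laplace picture. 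Since $\nu>1/(2r)$, the image of $t\mapsto 1/(\ii t+\nu)$ is $\partial B(1/(2\nu),1/(2\nu))\subseteq B(r,r)$, so assumption \eqref{eq:PDofM} applies pointwise with $z=1/(\ii t+\nu)$ and $z^{-1}=\ii t+\nu$.

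Setting $\hat\phi:=\mathcal{L}_\nu\phi$, integration of the pointwise bound together with unitarity of $\mathcal{L}_\nu$ yields
\[
\Re\langle\partial_{0,\nu}M(\partial_{0,\nu}^{-1})\phi,\phi\rangle_{\nu,0}=\int_\R\Re\langle(\ii t+\nu)M(1/(\ii t+\nu))\hat\phi(t),\hat\phi(t)\rangle_H\,dt\geq c\|\phi\|_{\nu,0}^2
\]
for every $\phi\in\dom(\partial_{0,\nu})$. Skew-self-adjointness of $A$ gives $\Re\langle A\phi,\phi\rangle=0$ on $\dom(A)$, which establishes \eqref{eq:coerc1} for $B_0$. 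On the same core, $B_0^*$ restricts to $M(\partial_{0,\nu}^{-1})^*\partial_{0,\nu}^*-A$ (using $A^*=-A$), and $\partial_{0,\nu}^*$ has Fourier--Laplace symbol $-\ii t+\nu$. Using $\overline{-\ii t+\nu}=\ii t+\nu$ and $\Re\overline{w}=\Re w$, the inner product $\Re\langle M(1/(\ii t+\nu))^*(-\ii t+\nu)\hat\psi,\hat\psi\rangle_H$ equals $\Re\langle(\ii t+\nu)M(1/(\ii t+\nu))\hat\psi,\hat\psi\rangle_H$, so the same lower bound $c\|\hat\psi\|_H^2$ applies. Hence \eqref{eq:coerc2} holds with the same $c$, and Lemma \ref{lem:pos_def} produces a continuous inverse $S_\nu$ of the closure $\overline{B_0}$ satisfying $\|S_\nu\|\leq 1/c$.

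For causality I would invoke the Paley--Wiener Theorem \ref{thm:PW} together with the uniform-in-$\nu$ analyticity of the resolvent symbol. In the Fourier--Laplace picture $S_\nu$ acts as multiplication with the operator-valued symbol $T(z):=(zM(1/z)+A)^{-1}$, which by the same coercivity applied to the spectral parameter is analytic on the half-plane $\{\Re z>1/(2r)\}$ with $\sup_{\Re z>1/(2r)}\|T(z)\|_{L(H)}\leq 1/c$. Given $a\in\R$ and $f\in H_{\nu,0}(\R;H)$ vanishing on $(-\infty,a)$, a time translation reduces to $a=0$; Theorem \ref{thm:PW} (applied to $e^{-\nu\cdot}f$) then gives that $\mathcal{L}_\nu f$ extends as an $\mathcal{H}^2(H)$-function on $\C_{>0}$. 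Multiplication by the bounded, operator-valued analytic symbol $T$ preserves $\mathcal{H}^2(H)$, so the converse direction of Theorem \ref{thm:PW} shows that $S_\nu f$ vanishes on $(-\infty,0)$, establishing \eqref{eq:defa}.

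The main obstacle I anticipate is the careful identification of the formal adjoint on the core and of the commutation $\partial_{0,\nu}M(\partial_{0,\nu}^{-1})=M(\partial_{0,\nu}^{-1})\partial_{0,\nu}$ on $\dom(\partial_{0,\nu})$, which is needed to justify passing to the Fourier--Laplace symbol for a general $\phi$ in the core rather than a smooth one; once that is in place the coercivity estimates reduce mechanically to \eqref{eq:PDofM}, and the causality argument is essentially bookkeeping with Paley--Wiener and the half-plane of analyticity.
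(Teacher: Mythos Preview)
Your overall strategy matches the paper's: establish the coercivity estimates \eqref{eq:coerc1}--\eqref{eq:coerc2} via the Fourier--Laplace representation and \eqref{eq:PDofM}, invoke Lemma~\ref{lem:pos_def}, and then derive causality from Paley--Wiener and analyticity of the resolvent symbol. The estimate for $B=\overline{B_0}$ and the causality argument are essentially the paper's.

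There is, however, a genuine gap in your treatment of \eqref{eq:coerc2}. You verify the lower bound for the \emph{formal} adjoint $\partial_{0,\nu}^*M(\partial_{0,\nu}^{-1})^*-A$ on the core $\dom(A)\cap\dom(\partial_{0,\nu})$ and implicitly pass this to $B^*$. But Lemma~\ref{lem:pos_def} requires the inequality for \emph{all} $\psi\in\dom(B^*)$, and there is no a~priori reason why $\dom(A)\cap\dom(\partial_{0,\nu})$ is a core for $B^*$ (equivalently, why $B^*$ is contained in the closure of the formal adjoint restricted to that domain). Knowing the inequality on a subspace that is merely dense in $H_{\nu,0}$, rather than graph-dense in $\dom(B^*)$, does not suffice. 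Your final paragraph flags ``identification of the formal adjoint on the core'' as an obstacle, but the real difficulty is the opposite inclusion: showing that every $f\in\dom(B^*)$ can be reached from the core.

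The paper closes this gap with an explicit regularization: for $f\in\dom(B^*)$ one sets $f_\epsilon:=(1+\epsilon\partial_{0,\nu}^*)^{-1}f$, uses that $(1+\epsilon\partial_{0,\nu})^{-1}$ commutes with $B_0$ and maps into $\dom(B_0)$, and computes directly that $f_\epsilon\in\dom(A)$ with
\[
(1+\epsilon\partial_{0,\nu}^*)^{-1}B^*f=(\partial_{0,\nu}^*M(\partial_{0,\nu}^{-1})^*-A)f_\epsilon.
\]
Letting $\epsilon\to0$ (strong convergence of $(1+\epsilon\partial_{0,\nu}^*)^{-1}$ to the identity) yields $B^*\subseteq\overline{(\partial_{0,\nu}^*M(\partial_{0,\nu}^{-1})^*-A)|_{\dom(B_0)}}$, from which \eqref{eq:coerc2} on all of $\dom(B^*)$ follows. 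This step is not bookkeeping; it is the substantive point your sketch is missing.
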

\begin{proof} 
At first we show the existence and uniqueness of solutions to the equation
\[
   \overline{(\partial_{0,\nu}M(\partial_{0,\nu}^{-1})+A)}u = f
\]
for given $f\in H_{\nu,0}(\mathbb{R};H)$, which boils down to (closability and) continuous invertibility of the (closure of the) partial differential operator $B_0 \coloneqq \partial_{0,\nu}M(\partial_{0,\nu}^{-1})+A$. 

At first, we observe that $B_0$ with $\dom(B_0)=\dom(A)\cap \dom(\partial_{0,\nu})$ is closable. Indeed, it is easy to check that $\partial_{0,\nu}^*M(\partial_{0,\nu}^{-1})^*-A$ with dense domain $\dom(B_0)$ is a formal adjoint. Hence, $B_0$ is closable. For the proof of the continuous invertibility of $B\coloneqq \overline{B_0}$, we apply Lemma \ref{lem:pos_def} with $G\coloneqq H_{\nu,0}(\R;H)$. To this end, take $\phi\in \dom(B_0)$ and compute
\begin{align*}
  \Re \langle (\partial_{0,\nu}M(\partial_{0,\nu}^{-1})+A)\phi,\phi\rangle & = \Re \langle \partial_{0,\nu}M(\partial_{0,\nu}^{-1})\phi,\phi\rangle+ \Re\langle A\phi,\phi\rangle \\
  & =\Re \langle \partial_{0,\nu}M(\partial_{0,\nu}^{-1})\phi,\phi\rangle\\
  & =\Re \langle (\ii m+\nu)M\left(\frac{1}{\ii m+\nu}\right)\mathcal{L}_\nu\phi,\mathcal{L}_\nu\phi\rangle\\
  & \geq c \langle \mathcal{L}_\nu\phi,\mathcal{L}_\nu\phi\rangle= c \langle \phi,\phi\rangle,  
\end{align*}
where we have used that $A$ is skew-self-adjoint (hence, $\Re \langle A\phi,\phi\rangle=-\Re \langle \phi,A\phi\rangle=-\Re \langle A\phi,\phi\rangle$), \eqref{eq:defM}, \eqref{eq:PDofM} as well as Plancherel's identity, that is, the unitarity of $\mathcal{L}_\nu$. This inequality carries over to all $\phi\in \dom(B)$. 

In order to use Lemma \ref{lem:pos_def}, we need to compute the adjoint of $B$. For this, note that $(1+\epsilon\partial_{0,\nu}^*)^{-1}$ converges strongly to the identity as $\epsilon\to 0$. So, fix $f\in \dom(B^*)$ and $\epsilon>0$. Observe that $(1+\epsilon\partial_{0,\nu})^{-1}$ commutes with $B_0$ and leaves the space $\dom(B_0)$ invariant. Then we compute for $\phi\in \dom(B_0)$
\begin{align*}
 \langle \phi,(1+\epsilon\partial_{0,\nu}^*)^{-1}B^*f\rangle & = \langle (1+\epsilon\partial_{0,\nu})^{-1}\phi,B^*f\rangle
 \\ & = \langle B_0 (1+\epsilon\partial_{0,\nu})^{-1}\phi,f\rangle
 \\ & = \langle (1+\epsilon\partial_{0,\nu})^{-1}B_0\phi,f\rangle
 \\ & = \langle B_0 \phi,(1+\epsilon\partial_{0,\nu}^*)^{-1}f\rangle
 \\ & = \langle \partial_{0,\nu}M(\partial_{0,\nu}^{-1}) \phi,(1+\epsilon\partial_{0,\nu}^*)^{-1}f\rangle+ \langle A \phi,(1+\epsilon\partial_{0,\nu}^*)^{-1}f\rangle
 \\ & = \langle  \phi,M(\partial_{0,\nu}^{-1})^*\partial_{0,\nu}^*(1+\epsilon\partial_{0,\nu}^*)^{-1}f\rangle+ \langle A \phi,(1+\epsilon\partial_{0,\nu}^*)^{-1}f\rangle
 \\ & = \langle  \phi,\partial_{0,\nu}^*M(\partial_{0,\nu}^{-1})^*(1+\epsilon\partial_{0,\nu}^*)^{-1}f\rangle+ \langle A \phi,(1+\epsilon\partial_{0,\nu}^*)^{-1}f\rangle.
\end{align*}
Hence, as $\dom(B_0)$ is a core for $A$, we infer that $(1+\epsilon\partial_{0,\nu}^*)^{-1}f\in \dom(A^*)$ and that 
\[
  A^*(1+\epsilon\partial_{0,\nu}^*)^{-1}f=-A(1+\epsilon\partial_{0,\nu}^*)^{-1}f=(1+\epsilon\partial_{0,\nu}^*)^{-1}B^*f-\partial_{0,\nu}^*M(\partial_{0,\nu}^{-1})^*(1+\epsilon\partial_{0,\nu}^*)^{-1}f,
\]
or, equivalently, 
 \[
  (1+\epsilon\partial_{0,\nu}^*)^{-1}B^*f=(\partial_{0,\nu}^*M(\partial_{0,\nu}^{-1})^*-A)(1+\epsilon\partial_{0,\nu}^*)^{-1}f.
 \]
Note that also $\dom(B_0) =  \dom(\partial_{0,\nu}^*)\cap \dom(A)$, since $\dom(\partial_{0,\nu})=\dom(\partial_{0,\nu}^*)$. Letting $\epsilon\to 0$ in the last equality, we infer that
 \[
    B^* \subseteq \overline{(\partial_{0,\nu}^*M(\partial_{0,\nu}^{-1})^*-A)|_{\dom(B_0)}}.
 \]
But as $\Re \langle (\partial_{0,\nu}^*M(\partial_{0,\nu}^{-1})^*-A)\psi,\psi\rangle\geq c\langle\psi,\psi\rangle$ for all $\psi\in \dom(B_0)$, we conclude that for all $\psi\in \dom(B^*)$
 \[
   \Re \langle B^*\psi,\psi\rangle \geq c\langle \psi,\psi\rangle. 
 \]
Hence, Lemma \ref{lem:pos_def} implies that $B$ is continuously invertible, and we denote $S_\nu := B^{-1}$. The norm estimate for $\|S_{\nu}\|$ follows from Lemma \ref{lem:pos_def}.
 
The next step is to show causality, and here we only sketch the arguments and we refer to \cite[Section 2.2, Theorem 2.10]{PicPhy} for the details. First of all, note that $B$ commutes with time-translation $\tau_hf\coloneqq f(\cdot+h)$ as it is also a function of $\partial_{0,\nu}$. In fact, one has $\tau_h =\mathcal{L}_\nu^* e^{(\ii m +\nu)h}\mathcal{L}_\nu$. Hence, causality needs only being checked for $a=0$ in \eqref{eq:defa}. Moreover, by the linearity of $S_\nu$, it suffices to verify the implication in \eqref{eq:defa} for $g=0$.
So, take $f\in H_{\nu,0}(\R;H)$ vanishing on $(-\infty,0]$. We have to show that $S_\nu f$ also vanishes on $(-\infty,0]$. Observe that $e^{-\nu m}f\in L^2(\lambda_{[0,\infty)};H)$. Hence, by the Paley--Wiener theorem $\mathcal{L}_\nu f =\mathcal{F}e^{-\nu m} f$ belongs to the Hardy--Lebesgue space of analytic functions on the half plane being uniformly in $L^2(\lambda_\R;H)$ on any line parallel to the coordinate axis, see Theorem \ref{thm:PW}.

\noindent
Next, $\overline{((\ii m +\nu)M(\frac{1}{\ii m +\nu})+A)}^{-1}$ as multiplication operator on the Hardy--Lebesgue space leaves the Hardy-Lebesgue space invariant, by the boundedness of the inverse and the analyticity of both the resolvent map and the mapping $M$. Thus, $\overline{((\ii m +\nu)M(\frac{1}{\ii m +\nu})+A)}^{-1}\mathcal{L}_\nu f$ belongs to the Hardy--Lebesgue space. Thus, $\mathcal{F}^*\overline{((\ii m +\nu)M(\frac{1}{\ii m +\nu})+A)}^{-1}\mathcal{L}_\nu f$ is supported on $[0,\infty)$, by the Paley--Wiener theorem. Hence,
 \[
    S_\nu f = \mathcal{L}_\nu^*\overline{((\ii m +\nu)M(\frac{1}{\ii m +\nu})+A)}^{-1}\mathcal{L}_\nu f
 \]
is also supported on $[0,\infty)$ only, yielding the assertion.
\end{proof}

The operator $S_\nu$ defined in the previous theorem is also denoted as \emph{solution operator} to the PDE. The concept of causality is an action-reaction principle, i.e.\ only if there is some non-zero action on the right-hand side of the equation, the solution can become non-zero.

\begin{rem}\label{rem:integratedPDE} (a)
As it was pointed out in \cite[p.\ 494]{picardbook}, we can freely work with $\partial_{0,\nu}$ in the PDE so that instead of solving $(\partial_{0,\nu}M(\partial_{0,\nu}^{-1})+A)u=f$, we could also solve 
\begin{equation}\label{eq:intpde}
  (\partial_{0,\nu}M(\partial_{0,\nu}^{-1})+A)v=\partial_{0,\nu}^{-1}f,
\end{equation}
and obtain the original solution $u=\partial_{0,\nu} v$. This will be advantageous when dealing with irregular right-hand sides, especially stochastic ones. In particular, $\partial_{0,\nu}^{-1}$ (and scalar functions thereof) commute with the solution operator $S_\nu$ given in Theorem \ref{thm:solth}. Thus (see also \cite[Theorem 6.2.5]{picardbook}), the solution theory obtained in Theorem \ref{thm:solth} carries over to $H_{\nu,k}(\mathbb{R};H)$, that is, the solution operator $S_\nu$ admits a continuous linear extension to all $H_{\nu,k}$-spaces:
\[
   S_\nu \in L(H_{\nu,k}(\mathbb{R};H)) \quad(k\in \mathbb{Z}).
\]

(b) It can be shown that for all $\eps>0$ and $u\in \dom(\overline{\partial_{0,\nu} M(\partial_{0,\nu}^{-1})+A})$ we have that $(1+\eps\partial_{0,\nu})^{-1}u\in \dom(\partial_{0,\nu})\cap \dom(A)$, see \cite[Theorem 6.2.5]{picardbook} or \cite[Lemma 5.2]{Waurick2014MMAS_Non}.

(c) With the notion of Sobolev chains as introduced in the previous section, we may neglect the closure bar in 
\begin{equation}\label{eq:epo}
  \overline{(\partial_{0,\nu}M(\partial_{0,\nu}^{-1})+A)}u=f.
\end{equation}
Indeed, the latter equation holds in $H_{\nu,0}(\mathbb{R};H)$, but, since \[H_{\nu,0}(\mathbb{R};H))\hookrightarrow H_{\nu,-1}(\mathbb{R};H)\cap H_{\nu,0}(\mathbb{R};H_{-1}(A+1))\text{ continuously,}\] we obtain equality \eqref{eq:epo} also in the space $H_{\nu,-1}(\mathbb{R};H)\cap H_{\nu,0}(\mathbb{R};H_{-1}(A+1))$. Moreover, for $u\in H_{\nu,0}(\mathbb{R};H)$, we have $\partial_{0,\nu}M(\partial_{0,\nu}^{-1})u\in H_{\nu,-1}(\mathbb{R};H)$ and $Au\in H_{\nu,0}(\mathbb{R};H_{-1}(A+1))$. Thus, 
\[
   \overline{(\partial_{0,\nu}M(\partial_{0,\nu}^{-1})+A)}u = \partial_{0,\nu}M(\partial_{0,\nu}^{-1})u+Au.
\]
In fact, in the proof of Theorem \ref{thm:solth} we have shown that $\dom(\partial_{0,\nu})\cap \dom(A)$ is dense in \[\dom(\overline{(\partial_{0,\nu}M(\partial_{0,\nu}^{-1})+A)})\] with respect to the graph norm of $\overline{(\partial_{0,\nu}M(\partial_{0,\nu}^{-1})+A)}$. But, a sequence $(u_n)_n$ converging to $u$ in $\dom(\overline{(\partial_{0,\nu}M(\partial_{0,\nu}^{-1})+A)})$ converges to $u$ particularly in $H_{\nu,0}(\mathbb{R};H)$. So, since $\partial_{0,\nu}\colon H_{\nu,0}(\mathbb{R};H)\to H_{\nu,-1}(\mathbb{R};H)$ and $A\colon H_{\nu,0}(\mathbb{R};H)\to H_{\nu,0}(\mathbb{R};H_{-1}(A+1))$ are continuous, we obtain
\[
   f=\lim_{n\to \infty} (\partial_{0,\nu}M(\partial_{0,\nu}^{-1})+A)u_n = \lim_{n\to\infty}\partial_{0,\nu}M(\partial_{0,\nu}^{-1})u_n+\lim_{n\to\infty}Au_n=\partial_{0,\nu}M(\partial_{0,\nu}^{-1})u+Au
\]
with limits computed in $H_{\nu,-1}(\mathbb{R};H)\cap H_{\nu,0}(\mathbb{R};H_{-1}(A+1))$.
\end{rem}

We shall now sketch how to deal with initial value problems. In fact, until now we have only considered equations like 
\[ (\partial_{0,\nu}M(\partial_{0,\nu}^{-1})+A)u=f, \]
that is, equations with a source term on the right-hand side, and some boundary conditions encoded in the domain of the (partial differential) operator $A$, but no initial conditions. In fact, we will show now, how to incorporate them into the right-hand side of the PDE. For a simple case, we rephrase the arguments in \cite[Section 6.2.5, Theorem 6.2.9]{picardbook}.

Take $u_0\in \dom(A)$, and $f\in H_{\nu,0}(\R;H)$ with $f$ vanishing on $(-\infty,0]$. Then, our formulation for initial value problems is as follows. For the sake of presentation, we let $M(\partial_{0,\nu}^{-1})=M_0+\partial_{0,\nu}^{-1}M_1$ for some self-adjoint, non-negative $M_0\in L(H)$ and some $M_1\in L(H)$ satisfying $\nu M_0+ \Re M_1\geq c$. An example for this would be $M_0=1_H$ and $M_1=0$. Consider
\[
   (\partial_{0,\nu}M_0 + M_1 +A ) v = f- \chi_{[0,\infty)}M_1 u_0 - \chi_{[0,\infty)}Au_0.
\]
Note that due to the exponential weight, we have $\chi_{[0,\infty)}M_1 u_0 + \chi_{[0,\infty)}Au_0\in H_{\nu,0}(\R;H)$. The solution theory in Theorem \ref{thm:solth} gives us a unique solution $v\in H_{\nu,0}(\R;H)$. Moreover, $v$ is supported on $[0,\infty)$, due to causality. 

\begin{lem}\label{lem:IVP}
  With the notation above, $u\coloneqq v+\chi_{[0,\infty)}u_0$ solves the initial value problem 
\[
     \begin{cases}
           \overline{(\partial_{0,\nu}M_0 + M_1 +A )} u = f & \text{ on }(0,\infty)\\
           (M_0u)(0+)=M_0u_0.&\text{ in }H_{-1}(A+1)
     \end{cases}
\]
\end{lem}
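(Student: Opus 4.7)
The plan is to split $u=v+\chi_{[0,\infty)}u_0$, apply the extended operator $\overline{(\partial_{0,\nu}M_0+M_1+A)}$ in the larger Sobolev chain space $H_{\nu,-1}(\mathbb{R};H)\cap H_{\nu,0}(\mathbb{R};H_{-1}(A+1))$ in the spirit of Remark \ref{rem:integratedPDE}(c), verify that the contribution of the jump $\chi_{[0,\infty)}u_0$ is the distributional term $\delta_0\otimes M_0 u_0$, and finally read off both the PDE on $(0,\infty)$ and the initial condition from this identity combined with a Sobolev embedding.

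First I would compute, termwise, $(\partial_{0,\nu}M_0+M_1+A)u$. Since $u_0\in\dom(A)$, both $A(\chi_{[0,\infty)}u_0)=\chi_{[0,\infty)}Au_0$ and $M_1(\chi_{[0,\infty)}u_0)=\chi_{[0,\infty)}M_1 u_0$ are genuine elements of $H_{\nu,0}(\mathbb{R};H)$. The only delicate term is $\partial_{0,\nu}M_0\chi_{[0,\infty)}u_0$; since $\chi_{[0,\infty)}M_0u_0\in H_{\nu,0}(\mathbb{R};H)$, its time-derivative in the Sobolev chain equals $\delta_0\otimes M_0u_0\in H_{\nu,-1}(\mathbb{R};H)$. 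Inserting the equation satisfied by $v$ in Theorem \ref{thm:solth} and cancelling the two $\chi_{[0,\infty)}$-terms, I get
\[
\overline{(\partial_{0,\nu}M_0+M_1+A)}\,u = f+\delta_0\otimes M_0 u_0,
\]
with the identity holding in $H_{\nu,-1}(\mathbb{R};H)\cap H_{\nu,0}(\mathbb{R};H_{-1}(A+1))$. Restricting to $(0,\infty)$ kills the Dirac term and yields $\overline{(\partial_{0,\nu}M_0+M_1+A)}\,u=f$ there.

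For the initial condition, I would note that the previous identity can be rewritten as
\[
\partial_{0,\nu}\bigl(M_0 u-\chi_{[0,\infty)}M_0 u_0\bigr)=f-M_1 u-A u,
\]
whose right-hand side lies in $H_{\nu,0}(\mathbb{R};H_{-1}(A+1))$. Consequently $M_0 u-\chi_{[0,\infty)}M_0 u_0$ belongs to $H_{\nu,1}(\mathbb{R};H_{-1}(A+1))$, and a standard Sobolev embedding on the line (for the Sobolev chain associated with $\partial_{0,\nu}$) gives a continuous representative with values in $H_{-1}(A+1)$. Since $v$ and $\chi_{[0,\infty)}u_0$ both vanish on $(-\infty,0)$ by causality, this continuous function vanishes on $(-\infty,0]$, so evaluating the right-sided limit at $0$ yields $(M_0 u)(0+)-M_0 u_0=0$ in $H_{-1}(A+1)$, which is exactly the initial condition.

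The main obstacle I expect is the bookkeeping of the various Sobolev-chain spaces: one has to justify that $\partial_{0,\nu}$ applied to the jump function $\chi_{[0,\infty)}M_0 u_0$ really produces the Dirac distribution $\delta_0\otimes M_0 u_0$ as a bona fide element of $H_{\nu,-1}(\mathbb{R};H)$, and that evaluating $M_0 u-\chi_{[0,\infty)}M_0 u_0$ pointwise at $t=0$ is legitimate via the $H_{\nu,1}\hookrightarrow C$ embedding in the $H_{-1}(A+1)$-valued setting. Once these functional-analytic identifications are in place (both of which follow from the construction of the Sobolev chains and the explicit formula for $\partial_{0,\nu}^{-1}$ in Remark \ref{r:pif}), the proof reduces to the termwise computation outlined above.
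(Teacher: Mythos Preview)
Your proposal is correct and follows essentially the same route as the paper's proof. The paper also works in $H_{\nu,-1}(\mathbb{R};H)\cap H_{\nu,0}(\mathbb{R};H_{-1}(A+1))$, notes that $\partial_{0,\nu}M_0\chi_{[0,\infty)}u_0$ vanishes on $(0,\infty)$ (your Dirac observation, phrased differently), and then obtains the initial condition by showing $M_0 v\in H_{\nu,1}(\mathbb{R};H_{-1}(A+1))$---which is exactly your function $M_0 u-\chi_{[0,\infty)}M_0 u_0$---and invoking the Sobolev embedding together with causality to conclude $(M_0 v)(0-)=(M_0 v)(0+)=0$.
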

\begin{proof}
Note that on $(0,\infty)$ we get
\begin{align*}
 f- \chi_{[0,\infty)}M_1 u_0 - \chi_{[0,\infty)}Au_0 & =(\partial_{0,\nu}M_0 + M_1 +A )(u-\chi_{[0,\infty)}u_0)
 \\ & = \partial_{0,\nu}M_0(u-\chi_{[0,\infty)}u_0) + (M_1 +A )(u-\chi_{[0,\infty)}u_0),
\end{align*}
where these equalities hold in $H_{\nu,-1}(\R;H)\cap H_{\nu,0}(\R;H_{-1}(A+1))$. Hence, as $\partial_{0,\nu}M_0\chi_{[0,\infty)}u_0$ vanishes on $(0,\infty)$, we arrive at
\[
  f = (\partial_{0,\nu}M_0+ M_1 +A )u\text{ on }(0,\infty).
\]
It remains to check whether the initial datum is attained. From the equation
\[(\partial_{0,\nu}M_0 + M_1 +A ) v = f- \chi_{[0,\infty)}M_1 u_0 - \chi_{[0,\infty)}Au_0 \]
we see that $\partial_{0,\nu}M_0v\in H_{\nu,0}( \R;H_{-1}(A+1))$. Thus, $M_0v\in H_{\nu,1}( \R;H_{-1}(A+1))$. By the Sobolev embedding theorem (see e.g.~\cite[Lemma 5.2]{Kalauch}), we infer $M_0v \in C(\R;H_{-1}(A+1))$. In particular, we get
\[
   (M_0v)(0-)=(M_0v)(0+)
\]
with limits in $H_{-1}(A+1)$. By causality, $M_0v(0-)=0$ and, thus, we arrive at
\[
   0=M_0(u-\chi_{[0,\infty)}u_0)(0+),
\]
which gives $(M_0u)(0+)=M_0u_0$, that is, the initial value is attained in $H_{-1}(A+1)$.
\end{proof}

The results above enable us to solve linear partial differential equations with initial conditions just by looking at non-homogeneous problems with $H_{\nu,0}$ right-hand sides. A few comments are in order.

\begin{rem}\label{rem:indep_of_solop}
  (a) The solution operator in Theorem \ref{thm:solth} is independent of $\nu$, in the following sense: let $\nu,\mu$ be sufficiently large and denote the corresponding solution operators by $S_\nu$ and $S_\mu$ respectively. Then for $f\in H_{\nu,0}(\R;H)\cap H_{\mu,0}(\R;H)$ we have $S_\nu f= S_\mu f$, see e.g.\ \cite[Theorem 6.1.4]{picardbook} or \cite[Lemma 3.6]{Trostorffexpstab} for a detailed proof. Therefore we shall occasionally drop the index $\nu$ in the time-derivative or the solution operator if there is no risk of confusion.
  
  (b) For the sake of presentation, we state the above treatment of the deterministic PDEs in a rather restricted way. In fact the solution theory mentioned in Theorem \ref{thm:solth} can be generalized to maximal monotone relations $A$, see e.g.\ \cite{Trostorff_nonl}, or to non-autonomous coefficients, see \cite{PTWW,Waurick2014MMAS_Non}. For our purposes of investigating random right-hand sides however, Theorem \ref{thm:solth} is sufficient.
\end{rem}

Now we present the last ingredient before turning to stochastic PDEs. We shall present a perturbation result which will help us to deduce well-posedness of stochastic partial differential equations, where we interpret the stochastic part as a nonlinear perturbation on the right-hand side of the PDE. In order to do so, we give a definition of so-called evolutionary mappings, which is a slight variant of the notions presented in \cite[Definition 2.1]{WaurickGcon} and \cite[Definition 4.7]{Kalauch}.

\begin{defn}\label{def:evolutionary} Let $H,G$ Hilbert spaces, $\nu_0>0$.
Let
\[ F: \dom(F)\subseteq \bigcap_{\nu\geq0} H_{\nu,0}(\R;H)\to \bigcap_{\nu\geq\nu_0}H_{\nu,0}(\R;G),\] 
where $\dom(F)$ is supposed to be a vector space. We call $F$ \emph{evolutionary (at $\nu_0$)}, if for all $\nu\geq\nu_0$, $F$ satisfies the following properties
\begin{enumerate}[label=(\roman{enumi}),ref=(\roman{enumi})]
 \item $F$ is Lipschitz-continuous as a mapping 
 \[ F_{0,\nu} \colon \dom(F)\subseteq H_{\nu,0}(\R;H)\to H_{\nu,0}(\R;G),\,\phi\mapsto F(\phi), \]
 \item $\|F\|_{\textrm{ev},\textrm{Lip}}\coloneqq \limsup_{\nu\to \infty} \|F_\nu\|_{\textrm{Lip}}<\infty$, with $F_\nu \coloneqq \overline{F_{0,\nu}}$.
\end{enumerate}
The non-negative number $\|F\|_{\textrm{ev},\textrm{Lip}}$ is called the \emph{the eventual Lipschitz constant of $F$}.

If, in addition, $F_\nu$ leaves $\dom(F_\nu)=\overline{\dom(F)}^{H_{\nu,0}}$ invariant, then we call $F$ \emph{invariant evolutionary (at $\nu_0$)}. 
\end{defn}
 
Similar to the solution operator $S_\nu$ to certain partial differential equations (see Remark \ref{rem:indep_of_solop}), evolutionary mappings are independent of $\nu$ in the following sense:
 
\begin{lem}\label{rem:indep_of_evo}
Let $F$ be evolutionary at $\nu_0>0$. Assume that multiplication by the cut-off function $\chi_{(-\infty,a]}$ leaves the space $\dom(F)$ invariant for all $a\in \R$, that is, for all $a\in \mathbb{R}$, $\phi\in \dom(F)$
\begin{equation}\label{eq:inv}
   \chi_{(-\infty,a]}\phi\subseteq \dom(F).
\end{equation}
Then $F_\nu|_{\dom(F_\nu)\cap \dom(F_\mu)}=F_\mu|_{\dom(F_\nu)\cap \dom(F_\mu)}$ for all $\nu\geq\mu\geq\nu_0$. 
\begin{proof}
Take $u\in \dom(F_\nu)\cap \dom(F_\mu)$ and assume as a first step, that $\chi_{(-\infty,a]}u=0$ for some $a\in \R$. By definition, there exists $(\phi_n)_n$ in $\dom(F)$ such that $\phi_n\to u$ in $H_{\mu,0}(\R;H)$. As $\dom(F)$ is a vector space and by being left invariant by multiplication by the cut-off function, we also have that $\psi_n\coloneqq\chi_{(a,\infty)}\phi_n\in \dom(F)$ as well as $\psi_n\to u$ in $H_{\mu,0}(\R;H)$. From $\nu>\mu$, we infer that $\psi_n\to u$ in $H_{\nu,0}(\R;H)$. Hence, as $H_{\nu,0}(\R;G)$ and $H_{\mu,0}(\R;G)$ are continuously embedded in $L^2_{\textrm{loc}}(\lambda_\R;G)$,
\[
   F_\mu(u)=\lim_{n\to\infty} F_\mu(\psi_n)=\lim_{n\to\infty} F_\nu(\psi_n)= F_\nu(u).
\]
For general $u\in \dom(F_\nu)\cap \dom(F_\mu)$, note that the sequence $(u_n)_{n\in\N}\coloneqq (\chi_{[-n,\infty)}u)_{n\in\N}$ converges in both spaces $H_{\nu,0}(\R;H)$ and $H_{\mu,0}(\R;H)$ by dominated convergence. The continuity of $F_\nu$ and $F_\mu$ implies convergence of $(F_\nu(u_n))_{n\in\N}$ and $(F_\mu(u_n))_{n\in\N}$ in $H_{\nu,0}(\R;G)$ and $H_{\mu,0}(\R;G)$, respectively. Therefore we get equality of the respective limits by $F_\nu(u_n)=F_\mu(u_n)$ by the arguments in the first step of this proof, again by the fact that both spaces $H_{\nu,0}(\R;G)$ and $H_{\mu,0}(\R;G)$ are continuously embedded in $L^2_{\textrm{loc}}(\lambda_\R;G)$.  
\end{proof}   
\end{lem}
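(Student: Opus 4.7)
The plan is to reduce the general case to the case of a function vanishing on a left half-line, where a cut-off argument together with a simple weight comparison transfers an approximating sequence from $H_{\mu,0}$ into $H_{\nu,0}$. Both $F_\mu$ and $F_\nu$ are closures of restrictions of the same underlying map $F$, so once a common approximating sequence in $\dom(F)$ is produced, equality of the extensions on the common element follows from continuity into the natural ambient space $L^2_{\mathrm{loc}}(\lambda_\mathbb R;G)$.

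The key quantitative observation I would isolate first is that for $\nu\geq\mu$ and $\phi\in H_{\mu,0}(\mathbb R;H)$ supported in $[a,\infty)$,
\[
  \|\phi\|_{\nu,0}^2 \;=\;\int_a^\infty \|\phi(x)\|_H^2 e^{-2\nu x}\,dx\;\leq\; e^{-2(\nu-\mu)a}\,\|\phi\|_{\mu,0}^2,
\]
so that on the subspace of functions vanishing on $(-\infty,a]$, convergence in $H_{\mu,0}$ automatically entails convergence in $H_{\nu,0}$. This is the mechanism to move approximations from the weaker to the stronger weight.

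Now take $u\in \dom(F_\nu)\cap\dom(F_\mu)$ and first assume $\chi_{(-\infty,a]}u=0$ for some $a\in\mathbb R$. By definition of $F_\mu$ pick $(\phi_n)_n$ in $\dom(F)$ with $\phi_n\to u$ in $H_{\mu,0}(\mathbb R;H)$. Set $\psi_n\coloneqq \chi_{(a,\infty)}\phi_n$; these lie in $\dom(F)$ by hypothesis \eqref{eq:inv} together with the vector space property of $\dom(F)$. Since multiplication by $\chi_{(a,\infty)}$ is a contraction on $H_{\mu,0}$ and $u=\chi_{(a,\infty)}u$, we still have $\psi_n\to u$ in $H_{\mu,0}$, and by the weight estimate above also in $H_{\nu,0}$. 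Continuity of $F_\mu$ and $F_\nu$ and the fact that $F_\mu(\psi_n)=F(\psi_n)=F_\nu(\psi_n)$ for each $n$ — viewed as elements of $L^2_{\mathrm{loc}}(\lambda_\mathbb R;G)$, into which both $H_{\nu,0}(\mathbb R;G)$ and $H_{\mu,0}(\mathbb R;G)$ embed continuously — let us pass to the limit and conclude $F_\mu(u)=F_\nu(u)$.

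For general $u\in\dom(F_\nu)\cap\dom(F_\mu)$, I would set $u_n\coloneqq \chi_{[-n,\infty)}u$. Each $u_n$ is left-bounded, so the previous step yields $F_\mu(u_n)=F_\nu(u_n)$. Dominated convergence (applied separately with the dominants furnished by $u\in H_{\mu,0}\cap H_{\nu,0}$) gives $u_n\to u$ in both $H_{\mu,0}$ and $H_{\nu,0}$, and continuity of both $F_\mu$ and $F_\nu$ transfers the equality to the limit. The main subtlety, and really the only nonroutine input, is ensuring that the cut-off $\chi_{(a,\infty)}$ preserves membership in $\dom(F)$ — which is precisely what hypothesis \eqref{eq:inv} buys us — and that the identification of $F_\mu(u)$ with $F_\nu(u)$ takes place in a topology common to both weighted spaces, for which $L^2_{\mathrm{loc}}$ suffices.
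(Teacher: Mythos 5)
Your argument is correct and follows essentially the same route as the paper's own proof: cut off an approximating sequence from $\dom(F)$ with $\chi_{(a,\infty)}$ to handle the left-bounded case, then reduce the general case via $u_n=\chi_{[-n,\infty)}u$ and dominated convergence, identifying the limits in $L^2_{\textrm{loc}}(\lambda_\R;G)$. Your explicit weight estimate $\|\phi\|_{\nu,0}^2\leq e^{-2(\nu-\mu)a}\|\phi\|_{\mu,0}^2$ for $\phi$ supported in $[a,\infty)$ is a nice quantitative version of the step the paper states only qualitatively.
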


\begin{rem}
	In both articles \cite[Definition 2.1]{WaurickGcon} and \cite[Definition 4.7]{Kalauch}, where the notion of evolutionary mappings was used, we assumed that the mappings under considerations are densely defined (and linear). Hence, the invariance condition is superfluous. But in the context of stochastic partial differential equations, one should think of $F$ to be a stochastic integral. This, however, is only a Lipschitz continuous mapping, if the processes to be integrated are adapted to the filtration given by the integrating process. The adapted processes form a closed subspace of all stochastic processes, and they will play the role of $\dom(F_\nu)$. This shall be specified in the next section. 
\end{rem}
 
As the final statement of this section, we provide the perturbation result which is applicable to stochastic partial differential equations.

\begin{cor}\label{cor:perres} Let $H$ be a Hilbert space, $\nu_0>0$. Assume that $F$ is invariant evolutionary (at $\nu_0$) as in Definition \ref{def:evolutionary} for $G=H$. Let $r>\frac{1}{2\nu_0}$, and suppose that $M\colon B(r,r)\to L(H)$ is analytic and bounded, satisfying 
\[
   \Re \langle z^{-1} M(z )\phi,\phi\rangle_H  \geq c \|\phi\|_H^2,
\]
for all $z\in B(r,r)$, all $\phi\in H$ and some $c>0$. Assume that $\|F\|_{\textnormal{ev},\textnormal{Lip}}< c$ and that $F_\nu$ is causal for all $\nu>\nu_0$. Furthermore suppose that for all $\nu>\nu_0$, we have $S_\nu\phi\subseteq \dom(F_\nu)$, for all $\phi\in \dom(F_\nu)$ with $S_\nu$ from Theorem \ref{thm:solth}.

Then the mapping
\begin{align*}
   \Phi_\nu \colon \dom(\Phi_\nu)\subseteq \dom(F_\nu) & \to \dom(F_\nu)\\
        u & \mapsto \overline{(\partial_{0,\nu}M(\partial_{0,\nu}^{-1})+A)}u+F_\nu(u)
\end{align*}
with domain 
\[
   \dom(\Phi_\nu) = \left\{ u\in \dom(F_\nu); \overline{(\partial_{0,\nu}M(\partial_{0,\nu}^{-1})+A)}u+F_\nu(u) \in \dom(F_\nu)\right\} 
\]
admits a Lipschitz-continuous inverse mapping defined on the whole of $\dom(F_\nu)$ for all $\nu>\nu_0$ large enough. Moreover, $\Phi_\nu^{-1}$ is causal.
\end{cor}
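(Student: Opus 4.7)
The plan is to solve $\Phi_\nu(u)=f$ by Banach's fixed point theorem, using that the linear part has already been inverted in Theorem \ref{thm:solth}. Rewriting $\Phi_\nu(u)=f$ by applying $S_\nu$ on both sides, the equation becomes the fixed point problem $u=T_f(u)$, where
\[
  T_f(u) := S_\nu\bigl(f-F_\nu(u)\bigr).
\]
The first issue is to check that $T_f$ maps $\dom(F_\nu)$ into itself. Since $\dom(F_\nu)=\overline{\dom(F)}^{H_{\nu,0}}$ is a closed vector subspace of $H_{\nu,0}(\R;H)$, the invariance hypothesis gives $F_\nu(u)\in\dom(F_\nu)$, so $f-F_\nu(u)\in\dom(F_\nu)$ for $f\in\dom(F_\nu)$; the explicit hypothesis that $S_\nu$ preserves $\dom(F_\nu)$ then gives $T_f(u)\in\dom(F_\nu)$.

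Next, I would establish that $T_f$ is a contraction on the complete metric space $\dom(F_\nu)$. Using $\|S_\nu\|\leq 1/c$ from Theorem \ref{thm:solth} together with the Lipschitz continuity of $F_\nu$,
\[
  \|T_f(u)-T_f(v)\|_{H_{\nu,0}} \leq \frac{\|F_\nu\|_{\mathrm{Lip}}}{c}\,\|u-v\|_{H_{\nu,0}}.
\]
By definition of the eventual Lipschitz constant, $\|F_\nu\|_{\mathrm{Lip}}\to\|F\|_{\mathrm{ev},\mathrm{Lip}}$ as $\nu\to\infty$, and since $\|F\|_{\mathrm{ev},\mathrm{Lip}}<c$ by hypothesis, the factor $\|F_\nu\|_{\mathrm{Lip}}/c$ is strictly less than $1$ for all $\nu>\nu_0$ sufficiently large. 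Banach's fixed point theorem then produces, for every $f\in\dom(F_\nu)$, a unique $u\in\dom(F_\nu)$ with $u=T_f(u)$, which is the required $\Phi_\nu^{-1}(f)$. The Lipschitz continuity of $\Phi_\nu^{-1}$ follows from subtracting the two fixed-point equations for $f_1,f_2$ and absorbing the $F_\nu$-term to the left-hand side, yielding
\[
  \|\Phi_\nu^{-1}(f_1)-\Phi_\nu^{-1}(f_2)\|_{H_{\nu,0}} \leq \frac{1}{c-\|F_\nu\|_{\mathrm{Lip}}}\,\|f_1-f_2\|_{H_{\nu,0}}.
\]

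For causality, which I expect to be the most delicate step, I would run the Picard iteration $u_i^{(0)}:=0$, $u_i^{(n+1)}:=T_{f_i}(u_i^{(n)})$ for $i=1,2$. Assuming $f_1=f_2$ on $(-\infty,a)$, a short induction using causality of $F_\nu$ (so $F_\nu(u_1^{(n)})=F_\nu(u_2^{(n)})$ on $(-\infty,a)$) and of $S_\nu$ (Theorem \ref{thm:solth}) gives $u_1^{(n)}=u_2^{(n)}$ on $(-\infty,a)$ for every $n$. Passing to the limit and using $H_{\nu,0}(\R;H)\hookrightarrow L^2_{\mathrm{loc}}(\lambda_\R;H)$ continuously (so that agreement on $(-\infty,a)$ is preserved under $H_{\nu,0}$-limits) yields $\Phi_\nu^{-1}(f_1)=\Phi_\nu^{-1}(f_2)$ on $(-\infty,a)$.

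The main obstacle, and indeed the reason why both the invariance of $F_\nu$ on $\dom(F_\nu)$ and the preservation of $\dom(F_\nu)$ under $S_\nu$ must be imposed as hypotheses, is to keep the Picard iterates inside $\dom(F_\nu)$ throughout; once this is secured, both the contraction estimate and the inductive causality argument are routine consequences of the linear theory.
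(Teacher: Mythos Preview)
Your proposal is correct and follows essentially the same approach as the paper: reformulate $\Phi_\nu(u)=f$ as the fixed-point equation $u=S_\nu(f-F_\nu(u))$, verify that this map is a self-map and a strict contraction on $\dom(F_\nu)$ for $\nu$ large, and deduce causality from the causality of $S_\nu$ and $F_\nu$. Two minor remarks: the eventual Lipschitz constant is a $\limsup$, not a limit, so one should only say $\|F_\nu\|_{\mathrm{Lip}}<c$ for $\nu$ sufficiently large (your conclusion is still correct); and the paper handles causality by simply noting that $T_f$ is a composition of causal maps, which is exactly what your Picard-iteration argument unpacks.
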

\begin{proof}
  Choose $\nu>\nu_0$ so large such that $\|F_\nu\|_{\textrm{Lip}}<c$ and let $f\in \dom(F_\nu)$. Now, $u\in \dom(\Phi_\nu)$ satisfies 
\[
 \overline{(\partial_{0,\nu}M(\partial_{0,\nu}^{-1})+A)}u+F_\nu(u) =f
\]
if and only if $u$ is a fixed point of the mapping
\[
  \Psi:\dom(F_\nu)\to \dom(F_\nu), x\mapsto \overline{(\partial_{0,\nu}M(\partial_{0,\nu}^{-1})+A)}^{-1}\left(f-F_\nu(x)\right).
\]
Note that, since $\dom(F_\nu)$ is a vector space, $\Psi$ is in fact well-defined. Moreover, $\Psi$ is a contraction, by the choice of $\nu$. Indeed, let $u,v\in \dom(F_\nu)$ then
\begin{align*}
   \|\Psi(u)-\Psi(v)\|& = \|\overline{(\partial_{0,\nu}M(\partial_{0,\nu}^{-1})+A)}^{-1}\left(f-F_\nu(u)\right)-\overline{(\partial_{0,\nu}M(\partial_{0,\nu}^{-1})+A)}^{-1}\left(f-F_\nu(v)\right)\|
   \\  & = \|\overline{(\partial_{0,\nu}M(\partial_{0,\nu}^{-1})+A)}^{-1}\left(F_\nu(u)\right)-\overline{(\partial_{0,\nu}M(\partial_{0,\nu}^{-1})+A)}^{-1}\left(F_\nu(v)\right)\|
   \\  & \leq \|\overline{(\partial_{0,\nu}M(\partial_{0,\nu}^{-1})+A)}^{-1}\|\|\left(F_\nu(u)\right)-\left(F_\nu(v)\right)\|
   \\  & \leq \frac{1}{c}\|\left(F_\nu(u)\right)-\left(F_\nu(v)\right)\|\leq \frac{1}{c}\|F_\nu\|_{\textrm{Lip}}\|u-v\|,
\end{align*}
so $\Psi$ is strictly contractive as $\|F_\nu\|_{\textrm{Lip}}<c$. Hence, the inverse of $\Phi_\nu$ is a well-defined Lipschitz continuous mapping, by the contraction mapping principle. 

Next, we show causality of the solution operator. For this it suffices to observe that $\Psi$ is causal. But, by Theorem \ref{thm:solth}, $\Psi$ is a composition of causal mappings, yielding the causality for $\Psi$ and, hence, the same for the solution mapping of the equation under consideration in the present corollary.
\end{proof}

As already mentioned, we use the above perturbation result to conclude well-posedness of stochastically perturbed partial differential equations. In the application, we have in mind, the invariance of $\dom(F_\nu)$ under $S_\nu$ is a consequence of causality. In fact, we will have that $\dom(F_\nu)$ is the restriction of $H_{\nu,0}$-functions to the class of predictable processes. A remark on the dependence of $\Phi^{-1}_\nu$ on $\nu$ is in order.

\begin{rem}\label{rem:indi}
In order to show independence of $\nu$, that is, $\Phi_\nu^{-1}f=\Phi_\mu^{-1}f$ for $f\in \dom(F_\nu)\cap \dom(F_\mu)$ for $\nu,\mu$ chosen large enough, we need to assume condition \eqref{eq:inv} in addition. Indeed, take $f\in \dom(F_\nu)\cap \dom(F_\mu)$ for $\nu,\mu$ sufficiently large as in Corollary \ref{cor:perres}.  Then, with $\Psi_\nu \coloneqq \overline{(\partial_{0,\nu}M(\partial_{0,\nu}^{-1})+A)}^{-1}\left(f-F_\nu(\cdot)\right)$ (and similarly for $\Psi_\mu$), the proof of Corollary \ref{cor:perres} shows that $\Phi_\nu^{-1}(f)=\lim_{n\to\infty} \Psi_\nu^n(f)$.  But, as $f\in \dom(F_\nu)\cap \dom(F_\mu)$, we get 
\[
   \Psi_\nu (f)= S_\nu (f-F_\nu(f))=S_\nu(f-F_\mu(f))=S_\mu(f-F_\mu(f))=\Psi_\mu(f),
\]
by Remark \ref{rem:indep_of_solop}(a) and Lemma \ref{rem:indep_of_evo}. In particular, $\Psi_\nu (f)\in \dom(F_\nu)\cap \dom(F_\mu)$. In the same way, one infers that $\Psi_\nu^n(f)=\Psi_\mu^n(f)$ for all $n\in \N$. Consequently, $\Phi_\nu^{-1}(f)$ and $\Phi_\mu^{-1}(f)$ are limits of the same sequence in $\dom(F_\nu)$ and $\dom(F_\mu)$, respectively. Thus, as both the latter spaces are continuously embedded into $L^2_{\textrm{loc}}(\lambda_\R;H)$ these limits coincide.
\end{rem}

Due to Remark \ref{rem:indi}, in what follows, we will not keep track on the value of $\nu>0$ in the notation of the operators involved as it will be clear from the context in which Hilbert space the operators are established in.

\section{Application to SPDEs}\label{sec:stochastics}
In this section we show how to apply the solution theory from Section \ref{sec:soltheory} to an SPDE of the form \eqref{eq:SPDE}. The basic idea is to replace the Hilbert space $H$ in Section \ref{sec:soltheory} by $L^2(\P)\otimes H(\cong L^2(\P;H))$, where $L^2(\P) = L^2(\Omega,\scrA,\P)$ is the $L^2$-space of a probability space $(\Omega,\scrA,\P)$ and $H$ is the Hilbert space where the (unbounded) operator $A$ is thought of as being initially defined. A typical choice would be $H=L^2(\lambda_D )^{d+1}$, for some open $D\subseteq\Rd$, and $A$ being some differential operator, but also more general operator equations are possible.

As already mentioned in the introduction, we consider the stochastic integral on the right-hand side as a perturbation of the deterministic partial differential equation. Therefore we need to make sense of the term 
\[ \big(\partial_0M(\partial_0^{-1})+A\big)^{-1}\bigg(\int_0^t \sigma(u(s))dW(s)\bigg). \]
In Section \ref{sec:SI} we establish that this term is well-defined, and after that, in Section \ref{sec:multiplicative}, we can use this to treat SPDEs with multiplicative noise using the fixed-point argument carried out in Corollary \ref{cor:perres}. In principle, using this idea, one can also treat SPDEs with additive noise, but a slightly different analysis in Section \ref{sec:additive} also gives us a result on SPDEs with additive noise.

\subsection{Treatment of the stochastic integral}\label{sec:SI}
The concept of stochastic integration we use in the following is the same as in \cite{dapratozabczyk}, and we repeat the most important points here.

\begin{defn}[Wiener process]\label{defn:Bm}
  Let $G$ be a separable Hilbert space, $(e_k)_{k\in\N}$ an orthonormal basis of $G$, $(\lambda_k)_{k\in\N}\in \ell_1(\N)$ with $\lambda_k\geq0$ for all $k\in \N$, and let $(W_k)_{k\in \N}$ be a sequence of independent real-valued Brownian motions. For $t\in [0,\infty)$ we define the \emph{$G$-valued Wiener process}, by
  \[
     W(t) =\sum_{k=1}^\infty \sqrt{\lambda_k} W_k(t)e_k,
  \]
  and we set $W(t)=0$ for all negative times $t\in (-\infty,0)$.
\end{defn}

In order to make sense of stochastic integration we reinterpret the notion of a filtration in an operator-theoretic way. We will use the notation $A\leq B$ for two bounded linear operators on a Hilbert space $H$ if $\langle Ax,x\rangle\leq \langle Bx,x\rangle$ for all $x\in H$. 

\begin{defn}[Filtration and predictable processes] (a) Let $H$ be a Hilbert space, $P=(P_t)_{t\geq 0}$ is called a \emph{filtration on $H$}, if for all $t\in \R_+$ the operator $P_t$ is an orthogonal projection, $P_s\leq P_t\leq 1_H$ for all $s\leq t$.

(b) Let $\nu>0$, $Z\colon\R\to H$. We call $Z$ \emph{predictable (with respect to $P$)}, if 
\[
   Z\in H_{\nu,0}(\R;P)\coloneqq \overline{S_P},
\]
where
\begin{equation}\label{eq:simple_proc}
 S_P\coloneqq \lin\{ \chi_{(s,t]}\phi;\phi\in \ran(P_s),s,t\in \R, s<t\}
\end{equation}
and the closure is taken in $H_{\nu,0}(\R;H)$.

(c) Let $G$ be a Hilbert space. We say that $Z\colon \R \to H\otimes G$ is \emph{predictable (with respect to $P$) with values in $G$}, if $Z$ is predictable with respect to $P\otimes 1_G\coloneqq (P_t\otimes 1_G)_{t\in \R}$.
\end{defn}

\begin{rem}\label{rem:filtisfilter} In applications, $H=L^2(\P)$ for some probability space $(\Omega,\scrA,\P)$ and $(P_t)_{t\in\R}$ is given by a family of nested $\sigma$-algebras $(\mathcal{F}_t)_{t\in \R}$. More precisely, 
\begin{equation}\label{eq:depexp}
   P_t \colon L^2(\P) \to L^2(\P), X\mapsto \E(X|\mathcal{F}_t)\quad (t\in \mathbb{R}).
\end{equation}
In particular, if we are given a  $G$-valued Wiener process $W$ with underlying probability space $(\Omega,\scrA,\P)$ as in Definition \ref{defn:Bm}, the natural filtration is given by $\mathcal{F}_t \coloneqq \sigma(W_k(s);k\in\N,-\infty < s\leq t)$, $t\in \R$. The corresponding family of projections $P_W=(P_t)_t$ is then given as in \eqref{eq:depexp}. Hence, $S_{P_W}$ (see also \eqref{eq:simple_proc}) reads
\begin{align*}
   S_{P_W} & = \lin\{ \chi_{(s,t]}\phi;\phi\in \ran(P_s),s,t\in \R, s<t\}
   \\ &      = \lin\{ \chi_{(s,t]}\phi;P_s\phi = \phi,s,t\in \R, s<t\}
   \\ &      = \lin\{ \chi_{(s,t]}\phi;\E(\phi|\mathcal{F}_s) = \phi,s,t\in \R, s<t\}
   \\ &      = \lin\{ \chi_{(s,t]}\phi;\phi\text{ is }\mathcal{F}_s\text{-measurable, }s,t\in \R, s<t\}.
\end{align*}
Note that $S_{P_W}$ are also called simple \emph{predictable processes}. In this case, one could also take $\scrA = \scrF_\infty \coloneqq \sigma\left(\bigcup_{t\geq0} \scrF_t\right)$.
\end{rem}

For later use, we also have to show that the solution map as defined in Section \ref{sec:soltheory} does not destroy the predictability. This is however a direct consequence of the causality of the solution map stated in Theorem \ref{thm:solth}. 
For ease of presentation, we will freely identify $H\otimes H_1$ with $H_1\otimes H$ and $H_{\nu,0}(\R;H)$ with $H_{\nu,0}(\R)\otimes H$. In particular, this effects the following loose notation: a continuous operator $M$ on $H$ is then extended to a continuous linear operator on $H_1\otimes H$ by $M\otimes 1_{H_1}$ (and of course by $1_{H_1}\otimes M$), if we want to stress that it is extended at all (and not simply write $M$).

\begin{thm}\label{thm:causal,adap} Let $H,G$ be Hilbert spaces, $P$ a filtration on $H$. Let $M\colon H_{\nu,0}(\R;G)\to H_{\nu,0}(\R;G)$ be a causal, continuous linear operator. Then the canonical extension of $M$ to $H_{\nu,0}(\R;G)\otimes H$ leaves the space of predictable processes invariant, that is,
\[
  M\otimes 1_H \left[H_{\nu,0}(\R;P\otimes 1_G)\right]\subseteq H_{\nu,0}(\R;P\otimes 1_G).
\]
\end{thm}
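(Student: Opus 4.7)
The plan is to exploit the closedness of $H_{\nu,0}(\R;P\otimes 1_G)$ inside $H_{\nu,0}(\R;H\otimes G)$ together with the continuity of $M\otimes 1_H$, so that it suffices to show $(M\otimes 1_H)[S_{P\otimes 1_G}]\subseteq H_{\nu,0}(\R;P\otimes 1_G)$. By linearity, the job reduces to processes of the form $\chi_{(s,t]}\phi$ with $\phi\in \ran(P_s\otimes 1_G)$, and since $\ran(P_s)\otimes G$ is total in $\ran(P_s\otimes 1_G)$ (as $P_s\otimes 1_G$ is an orthogonal projection whose range is the tensor product of ranges), continuity of multiplication by $\chi_{(s,t]}$ and of $M\otimes 1_H$ further reduces the problem to elementary tensors $\chi_{(s,t]}(h\otimes g)$ with $h\in\ran(P_s)$ and $g\in G$.

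For such an elementary tensor, I would rewrite the element via the canonical isomorphism $H_{\nu,0}(\R;H\otimes G)\cong H\otimes H_{\nu,0}(\R;G)$ as $h\otimes(\chi_{(s,t]}g)$. Under this identification $M\otimes 1_H$ acts as $1_H\otimes M$, giving
\[
  (M\otimes 1_H)\bigl(\chi_{(s,t]}(h\otimes g)\bigr) = h\otimes M(\chi_{(s,t]}g).
\]
Causality of $M$ and the fact that $\chi_{(s,t]}g$ vanishes on $(-\infty,s]$ imply that $f:=M(\chi_{(s,t]}g)\in H_{\nu,0}(\R;G)$ vanishes on $(-\infty,s]$ as well.

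The crucial step is then a density argument. I would approximate $f$ in $H_{\nu,0}(\R;G)$ by step functions of the form $f_n=\sum_{k} \chi_{(s_k^n,s_{k+1}^n]}g_k^n$ with $g_k^n\in G$ and $s\le s_0^n<s_1^n<\cdots$; this is possible because $G$-valued simple functions are dense in $H_{\nu,0}(\R;G)$ and one may first approximate by arbitrary simple functions and then multiply by $\chi_{(s,\infty)}$ (which is a bounded operator on $H_{\nu,0}(\R;G)$ and leaves $f$ unchanged). Then $h\otimes f_n = \sum_k \chi_{(s_k^n,s_{k+1}^n]}(h\otimes g_k^n)$ converges to $h\otimes f$ in $H_{\nu,0}(\R;H\otimes G)$. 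Because $s\le s_k^n$ gives $\ran(P_s)\subseteq \ran(P_{s_k^n})$, we have $h\otimes g_k^n\in \ran(P_{s_k^n}\otimes 1_G)$, so each summand lies in $S_{P\otimes 1_G}$ and hence $h\otimes f_n\in S_{P\otimes 1_G}$. Passing to the limit places $(M\otimes 1_H)(\chi_{(s,t]}(h\otimes g))$ in the closure $H_{\nu,0}(\R;P\otimes 1_G)$, completing the proof.

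The bulk of the argument is routine; the expected main obstacle is keeping the tensor product bookkeeping straight, especially the identification $H_{\nu,0}(\R;H\otimes G)\cong H\otimes H_{\nu,0}(\R;G)$ under which $M\otimes 1_H$ and $1_H\otimes M$ agree. Causality of $M$ is used in exactly one place—to ensure that the approximating partition can start at $s$ rather than at some earlier time—which is what allows the range condition $h\otimes g_k^n\in \ran(P_{s_k^n}\otimes 1_G)$ to be satisfied via the monotonicity $P_s\le P_{s_k^n}$.
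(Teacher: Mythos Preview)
Your argument is correct and follows essentially the same route as the paper's proof: reduce to simple predictable processes by continuity, then to elementary tensors $\chi_{(s,t]}(h\otimes g)$ with $h\in\ran(P_s)$, apply $M$ to the $G$-valued factor, use causality to get support in $[s,\infty)$, and approximate the result by $G$-valued step functions with partition points $\ge s$ so that tensoring back with $h$ lands in $S_{P\otimes 1_G}$. Your bookkeeping with the identification $H_{\nu,0}(\R;H\otimes G)\cong H\otimes H_{\nu,0}(\R;G)$ is exactly what the paper does implicitly when it writes $(M\otimes 1_H)(\chi_{(s,t]}\phi\otimes\psi)=(M(\chi_{(s,t]}\psi))\otimes\phi$.
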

\begin{proof}
 By continuity of $M$, it suffices to prove $(M\otimes 1_H)[S_{P\otimes 1_{G}}]\subseteq H_{\nu,0}(\R;P\otimes 1_G)$. Let $f\in S_{P\otimes 1_{G}}$. By linearity of $M\otimes 1_H$, we may assume without loss of generality that $f=\chi_{(s,t]}\eta$ for some $\eta=(P_s\otimes 1_G)(\eta)\in H\otimes G$ and $s,t\in \mathbb{R}$. Next, by the density of the algebraic tensor product of $H$ and $G$, we find sequences $(\phi_n)_n$ in $H$ and $(\psi_n)_n$ in $G$ with the property
 \[
    \sum_{n=1}^\infty \phi_n\otimes \psi_n =\eta\in H \otimes G.
 \]
 But, 
 \[
    \eta = (P_s\otimes 1_G)(\eta) = (P_s\otimes 1_G) \sum_{n=1}^\infty \phi_n\otimes \psi_n = \sum_{n=1}^\infty (P_s\phi_n)\otimes \psi_n.
 \]
 Thus, without restriction, we may assume that $\phi_n=P_s\phi_n$ for all $n\in\mathbb{N}$. Since, by definition, the predictable mappings form a closed subset of $H_{\nu,0}(\R;H\otimes G)$ and $M\otimes 1_H$ is continuous, it suffices to prove that for all $N\in \mathbb{N}$, 
 \[
    (M\otimes 1_H) \left(\chi_{(s,t]}\sum_{n=1}^N \phi_n\otimes \psi_n\right)
 \]
 is predictable. By linearity of $M\otimes 1_H$, we are left with showing that $(M\otimes 1_H) \left(\chi_{(s,t]}\phi\otimes \psi\right)$ is predictable for all $\phi\in \ran(P_s)$ and $\psi\in G$. Note that
 \[
    (M\otimes 1_H) \left(\chi_{(s,t]}\phi\otimes \psi\right) = (M(\chi_{(s,t]}\psi))\otimes \phi.
 \]
Next, causality of $M$ implies that $\spt M(\chi_{(s,t]}\psi) \subseteq [s,\infty)$, where $\spt$ denotes the support of a function. We conclude with the observation that $M(\chi_{(s,t]}\psi)$ can be approximated by simple functions in $H_{\nu,0}(\R;G)$ supported on $(s,\infty)$ only. Hence, $(M(\chi_{(s,t]}\psi))\otimes \phi$ is predictable.\end{proof}

Next, we come to the discussion of the stochastic integral involved:

\begin{defn}[stochastic integral]\label{defn:stoch_int}
  Let $H$, $G$ be separable Hilbert spaces\[W=\sum_{k=1}^\infty \sqrt{\lambda_k} W_k(\cdot)e_k\] a $G$-valued Wiener process. Let $Z$ be a predictable stochastic process with respect to the natural filtration induced by $W$ as in Remark \ref{rem:filtisfilter} with values in $L_2(G,H)$, the space of Hilbert--Schmidt operators from $G$ to $H$. Then we define \emph{the stochastic integral of $Z$ with respect to $W$} for all $t\in[0,\infty)$ as follows
  \[
    \int_0^t Z(s) dW(s)\coloneqq \sum_{k\in\N} \lambda_k^{1/2}\int_0^t Z(s)e_k dW_k(s).
  \] 
  We put $\int_0^t Z(s) dW(s)\coloneqq 0$ for all $t<0$.
\end{defn}

\begin{rem}[It\^o isometry]
  In the situation of Definition \ref{defn:stoch_int}, the following It\^o isometry holds
  \[ \E\bigg[\bigg\|\int_0^t Z(s)dW(s)\bigg\|_H^2\bigg] = \E\bigg[\int_0^t \|Z(s)\|^2_{L_2(G,H)}ds\bigg]. \]
  Moreover, the stochastic integral seen as a process in $t\in\R$ is continuous and predictable with values in $H$, see \cite[Chapter 4]{dapratozabczyk} for details.
\end{rem}

Next, we will show the assumptions in Corollary \ref{cor:perres} applied to $F\colon u\mapsto \int_0^{(\cdot)}\sigma(u)dW$ with suitable Lipschitz continuous $\sigma$. For this we need the following key observation; we recall also Remark \ref{rem:filtisfilter}. We denote the Hilbert--Schmidt norm also by $\|\cdot\|_{L_2}$.

\begin{thm}\label{thm:stoch_int_evo} 
	Let $G$ be a separable Hilbert space, and $W$ a $G$-valued Wiener process, $(\Omega,\scrA,\P)$ as its underlying probability space and $(\mathcal{F}_t)_t$ the natural filtration induced by $W$ and corresponding filtration $P_W=(\E(\cdot|\mathcal{F}_t))_t$ on $L^2(\P)$. Then the mapping
	\[ F\colon S_{P_W\otimes 1_{L_2(G,H)}} \subseteq H_{\nu,0}(\R;P_{W}\otimes 1_{L_2(G,H)})\to H_{\nu,0}(\R;P_W\otimes 1_H), \] 
	where $F$ is given by 
  \[
    F(Z)=\left(t\mapsto \int_0^t Z(s) dW(s)\right)
   \]
  is evolutionary at $\nu$ for all $\nu>0$ with eventual Lipschitz constant $0$.
\end{thm}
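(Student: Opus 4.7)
The plan is to exploit the linearity of the stochastic integral together with the Itô isometry in order to produce an explicit Lipschitz estimate for $F_{0,\nu}$ whose constant decays to zero as $\nu\to\infty$. Since $F$ is linear on the simple predictable processes $S_{P_W\otimes 1_{L_2(G,H)}}$, both the Lipschitz continuity required in Definition \ref{def:evolutionary}\,(i) and the quantitative statement on the eventual Lipschitz constant in (ii) reduce to bounding $\|F(Z)\|_{H_{\nu,0}(\R;L^2(\P;H))}$ by a $\nu$-dependent multiple of $\|Z\|_{H_{\nu,0}(\R;L^2(\P;L_2(G,H)))}$ that tends to zero.

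For the core computation I would fix $Z\in S_{P_W\otimes 1_{L_2(G,H)}}$, use that $F(Z)(t)=0$ for $t<0$ by convention, and apply the Hilbert-space-valued Itô isometry to get
\begin{align*}
\|F(Z)\|^2_{H_{\nu,0}(\R;L^2(\P;H))}
&=\int_0^\infty \E\!\left[\left\|\int_0^t Z(s)\,dW(s)\right\|_H^2\right] e^{-2\nu t}\,dt \\
&=\int_0^\infty\!\int_0^t \E\!\left[\|Z(s)\|_{L_2(G,H)}^2\right]\,ds\; e^{-2\nu t}\,dt.
\end{align*}
Interchanging the order of integration via Tonelli's theorem (the integrand is non-negative) and evaluating $\int_s^\infty e^{-2\nu t}\,dt=\tfrac{1}{2\nu}e^{-2\nu s}$ yields
\[
\|F(Z)\|^2_{H_{\nu,0}(\R;L^2(\P;H))}
\le \frac{1}{2\nu}\,\|Z\|^2_{H_{\nu,0}(\R;L^2(\P;L_2(G,H)))},
\]
so that $\|F_{0,\nu}\|_{\textrm{Lip}}\le 1/\sqrt{2\nu}$.

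This single inequality delivers both parts of the statement simultaneously: condition (i) of Definition \ref{def:evolutionary} holds for every $\nu>0$, and since $1/\sqrt{2\nu}\to 0$ as $\nu\to\infty$, the eventual Lipschitz constant $\|F\|_{\textrm{ev},\textrm{Lip}}$ equals zero. The remaining verifications are routine. One needs to check that $F(Z)$ actually lies in the target space $H_{\nu,0}(\R;P_W\otimes 1_H)$, which follows because the stochastic integral of a simple predictable process is continuous and adapted, hence predictable, and the computation above guarantees finiteness of the weighted $L^2$-norm. The closure $F_\nu=\overline{F_{0,\nu}}$ then extends $F$ to all of $H_{\nu,0}(\R;P_W\otimes 1_{L_2(G,H)})$ while preserving the same Lipschitz bound. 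No step poses a serious obstacle; the only mild care required is invoking the Itô isometry in its Hilbert-space-valued form for $L_2(G,H)$-valued simple integrands, which is standard and detailed in \cite[Chapter 4]{dapratozabczyk}.
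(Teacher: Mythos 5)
Your proposal is correct and follows essentially the same route as the paper's own proof: linearity reduces the Lipschitz estimate to a norm bound, which is obtained by combining the It\^o isometry with Fubini/Tonelli to evaluate the weighted time integral, yielding the factor $\tfrac{1}{2\nu}$ and hence eventual Lipschitz constant $0$. The only cosmetic difference is that the paper records the chain of identities as an exact equality $\|F(Z)\|^2_{\nu,0}=\tfrac{1}{2\nu}\|Z\|^2_{\nu,0}$ on simple predictable processes, whereas you state it as an inequality, which is immaterial.
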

\begin{proof}
  Since $F$ is linear and maps simple predictable processes to predictable processes, it suffices to prove boundedness of $F$. In order to do so, let $Z\in S_{P_W\otimes 1_{L_2(G,H)}}$. Then, we get using Fubini's Theorem and the It\^o isometry,
\begin{align}\label{eq:LCSI}
	& \E\bigg[\bigg\|\int_0^\cdot Z(s) dW(s)\bigg\|^2_{\nu,0}\bigg] \notag\\
	& = \int_{\R} \E\bigg[\bigg\|\int_0^t Z(s) dW(s)\bigg\|^2_{H}\bigg]\exp(-2\nu t)d t \notag\\
	& = \int_{\R} \E\bigg[\int_0^t\big\|Z(s)\big\|_{L_2}^2 ds\bigg]\exp(-2\nu t)d t \notag\\
	& = \E\bigg[\int_\R\big\|Z(s)\big\|_{L_2}^2 \int_s^\infty\exp(-2\nu t)d td s\bigg] \notag\\
	& = \frac{1}{2\nu} \E\bigg[\int_\R\big\|Z(s)\big\|_{L_2}^2 \exp(-2\nu s) ds\bigg] \notag\\	
	& = \frac{1}{2\nu} \E\big[\|Z\|^2_{\nu,0}\big].
\end{align}
From this we see that $F$ is Lipschitz continuous, and that its Lipschitz constant goes to zero as $\nu\to\infty$.
\end{proof}

\begin{rem}[on space-time white noise]
  Note that in the proof of the previous theorem, the crucial ingredients are Fubini's Theorem and the It\^o isometry. The It\^o isometry is true also for the stochastic integral with the Wiener process attaining values in a possibly larger Hilbert space $G'\supseteq G$. Hence, the latter theorem remains true, if we consider space-time white noise instead of the white noise discussed in this exposition, see (in particular) \cite[formula (3.16)]{H09}.
\end{rem}

\subsection{SPDEs with multiplicative noise}\label{sec:multiplicative}
In this section we apply the solution theory presented in Section \ref{sec:soltheory} to equations with a stochastic integral. As already mentioned in the introduction, we consider equations of the form \eqref{eq:SPDE} with a stochastic integral instead of the more common random noise term $\sigma(u(t))\dot W(t)$. This formulation is however in line with the usual way of formulating an SPDE, since in some sense we consider ``a once integrated SPDE'' and we interpret the noise term $\partial_0^{-1}(\sigma(u(t))\dot W(t))$ as the stochastic integral in Hilbert spaces with respect to a cylindrical Wiener process denoted by 
\[ \partial_0^{-1}\big(\sigma(u(t))\dot W(t)\big) \coloneqq \int_0^t \sigma(u(s))\dot W(s)ds\coloneqq \int_0^t \sigma(u(s)) dW(s). \]
As a matter of convenience, we treat the case of zero initial conditions first. In Remark \ref{rem:different_Lipsch}(b) we shall comment on how non-vanishing initial data can be incorporated into our formulation.

\begin{thm}[Solution theory for (abstract) stochastic differential equations]\label{thm:spdes}
  Let $H$, $G$ be separable Hilbert spaces, and let $W$ be a $G$-valued Wiener process with underlying probability space $(\Omega,\scrA,\P)$. Assume that the filtration $P_W=(P_t)_{t}$ on $L^2(\P)$ is generated by $W$ (see Remark \ref{rem:filtisfilter}). Let $r>0$, and assume that $M\colon B(r,r)\to L(H)$ is an analytic and bounded function, satisfying 
  \begin{equation}\label{eq:condition_M}
    \Re\langle (z^{-1}M(z))\phi,\phi\rangle_H  \geq c\|\phi\|_H^2,
  \end{equation}
  for all $z\in B(r,r)$, $\phi\in H$ and some $c>0$. Let $A\colon\dom(A)\subseteq H\to H$ be skew-self-adjoint, and $\sigma\colon H\to L_2(G,H)$ with
  \[
     \|\sigma(u)-\sigma(v)\|_{L_2}\leq L\|u-v\|_H\quad(u,v\in H)
  \]
  for some $L\geq0$.

  Then there exists $\nu_1\geq0$ such that for all $\nu>\nu_1$, and $f\in H_{\nu,0}(\R;P_W\otimes 1_H)$ the equation
  \begin{equation}\label{eq:thm_spde}
    \overline{(\partial_{0,\nu}M(\partial_{0,\nu}^{-1})+A)}u= f+ \int_0^{\cdot}\sigma(u(s))dW(s)
  \end{equation}
  admits a unique solution $u\in H_{\nu,0}(\R;P_W\otimes 1_H)$. The solution does not depend on $\nu$ in the sense of Remark \ref{rem:indep_of_solop}.
  \end{thm}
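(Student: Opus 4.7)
The plan is to obtain this theorem as a direct application of the perturbation result in Corollary \ref{cor:perres} with the nonlinearity
\[
   F\colon u \mapsto \left(t\mapsto \int_0^t \sigma(u(s))\,dW(s)\right),
\]
interpreted as an invariant evolutionary mapping on the space $\dom(F_\nu)\coloneqq H_{\nu,0}(\R;P_W\otimes 1_H)$ of $H$-valued predictable processes. Once $F$ is shown to fit the framework of Corollary \ref{cor:perres}, the fixed-point argument there produces the unique solution to the equation
\[
   \overline{(\partial_{0,\nu}M(\partial_{0,\nu}^{-1})+A)}u + (-F(u)) = f,
\]
which is exactly \eqref{eq:thm_spde}; $\nu$-independence follows from Remarks \ref{rem:indep_of_solop} and \ref{rem:indi} once invariance of the test space under cut-offs is noted.

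The first main step is to make $F$ into a well-defined, invariant evolutionary mapping with eventual Lipschitz constant strictly less than $c$. To that end, I first observe that the composition $u\mapsto \sigma\circ u$ takes $H_{\nu,0}(\R;P_W\otimes 1_H)$ to $H_{\nu,0}(\R;P_W\otimes 1_{L_2(G,H)})$: on simple predictable processes this follows from measurability of $\sigma$ and Lipschitz continuity, and it extends to all predictable processes by the estimate $\|\sigma(u)-\sigma(v)\|_{\nu,0}\le L\|u-v\|_{\nu,0}$ combined with the closedness of the predictable subspace. Next, the linear map sending a predictable $L_2(G,H)$-valued integrand to the process $t\mapsto \int_0^t \cdot\,dW(s)$ is, by Theorem \ref{thm:stoch_int_evo}, bounded from $S_{P_W\otimes 1_{L_2(G,H)}}$ into $H_{\nu,0}(\R;P_W\otimes 1_H)$ with operator norm $\le (2\nu)^{-1/2}$, and therefore extends by continuity to the closure, which is all of $H_{\nu,0}(\R;P_W\otimes 1_{L_2(G,H)})$. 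Composing the two steps yields
\[
   \|F(u)-F(v)\|_{\nu,0} \le \tfrac{L}{\sqrt{2\nu}}\,\|u-v\|_{\nu,0},
\]
so $F$ is Lipschitz continuous from $\dom(F_\nu)$ into itself (hence invariant) and its eventual Lipschitz constant equals zero; in particular it is eventually smaller than $c$.

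The second main step is to verify the remaining structural hypotheses of Corollary \ref{cor:perres}. Causality of $F$ is immediate from the definition of the stochastic integral, which depends only on the past of the integrand; more formally, for $Z\in \dom(F_\nu)$ supported in $(-\infty,a]$ we have $F(Z)=0$ on $(-\infty,0]$ and one reduces the general case to this via time translation and linearity, as in the causality argument of Theorem \ref{thm:solth}. Invariance of the space of predictable processes under the deterministic solution operator $S_\nu$ is provided by Theorem \ref{thm:causal,adap}, since $S_\nu$ is causal by Theorem \ref{thm:solth}. With these ingredients, Corollary \ref{cor:perres} applies for all $\nu$ larger than some $\nu_1$ and furnishes a causal Lipschitz inverse $\Phi_\nu^{-1}$ defined on all of $\dom(F_\nu)$, and $u\coloneqq \Phi_\nu^{-1}(f)$ is the desired unique predictable solution; $\nu$-independence follows from Remark \ref{rem:indi}, using that multiplication by $\chi_{(-\infty,a]}$ preserves predictability.

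I expect the most delicate point of the argument to be checking that the Nemytski operator $u\mapsto \sigma\circ u$ genuinely maps predictable $H$-valued processes to predictable $L_2(G,H)$-valued processes, rather than just yielding adapted and measurable processes. One handles this by approximating a predictable $u$ in $H_{\nu,0}$-norm by elements of $S_{P_W\otimes 1_H}$, applying $\sigma$ pointwise to such simple processes (which stays within $S_{P_W\otimes 1_{L_2(G,H)}}$ up to approximation by simple $L_2(G,H)$-valued tensors), and passing to the limit using the Lipschitz estimate and the closedness of the predictable subspace in $H_{\nu,0}$. All other steps, in particular the Itô-isometry bound on $F$ and the causality of the stochastic integral, are essentially direct consequences of the preceding sections.
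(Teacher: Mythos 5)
Your proposal follows essentially the same route as the paper: apply Corollary \ref{cor:perres} to $F(u)=\int_0^{\cdot}\sigma(u(s))\,dW(s)$, using Theorem \ref{thm:stoch_int_evo} for the It\^o-isometry bound, the Lipschitz continuity of the Nemytskii map for the eventual Lipschitz constant $0$, Theorem \ref{thm:causal,adap} for invariance of the predictable processes under $S_\nu$, and Remark \ref{rem:indi} for $\nu$-independence. One small correction: as literally stated, the claim that $u\mapsto\sigma\circ u$ maps $H_{\nu,0}(\R;P_W\otimes 1_H)$ into $H_{\nu,0}(\R;P_W\otimes 1_{L_2(G,H)})$ fails when $\sigma(0)\neq 0$, since the constant function $t\mapsto\sigma(0)$ is not square-integrable against $e^{-2\nu t}\,dt$ on $(-\infty,0)$; the paper repairs this by replacing the integrand with $\chi_{[0,\infty)}(\cdot)\sigma(u(\cdot))$, which leaves the stochastic integral unchanged because $W$ vanishes for negative times, and your argument should incorporate this cut-off.
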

\begin{proof}
We apply Corollary \ref{cor:perres} for $F\colon Z\mapsto \int_0^{\cdot}\sigma(Z)dW(s)$. By Theorem \ref{thm:stoch_int_evo} and the Lipschitz continuity of $\sigma$, we infer that $F$ is invariant evolutionary with eventual Lipschitz constant being $0$. Indeed, since $W(t)=0$ for $t<0$, we may write
\[
   \int_0^{\cdot}\sigma(u(s))dW(s) = \int_0^{\cdot}\chi_{[0,\infty)}(s)\sigma(u(s))dW(s).
\]
But, for all $\nu>0$, $s\mapsto \chi_{[0,\infty)}(s)\sigma(0) \in H_{\nu,0}(\R;P_W\otimes 1_{L_2(G;H)})$ and, therefore, we get for $u\in H_{\nu,0}(\R;P_W\otimes 1_{H})$
\begin{align*}
   & \|\chi_{[0,\infty)}(\cdot)\sigma(u(\cdot))\|_{H_{\nu,0}(\R;P_W\otimes 1_{L_2(G;H)})}
   \\& \leq \|\chi_{[0,\infty)}(\cdot)\sigma(u(\cdot))-\chi_{[0,\infty)}(\cdot)\sigma(0)\|_{H_{\nu,0}(\R;P_W\otimes 1_{L_2(G;H)})}+\|\chi_{[0,\infty)}(\cdot)\sigma(0)\|_{H_{\nu,0}(\R;P_W\otimes 1_{L_2(G;H)})} \\
    & \leq L\|\chi_{[0,\infty)}(\cdot)u(\cdot)\|_{H_{\nu,0}(\R;P_W\otimes 1_{H})}+\|\chi_{[0,\infty)}(\cdot)\sigma(0)\|_{H_{\nu,0}(\R;P_W\otimes 1_{L_2(G;H)})}<\infty.
\end{align*}
Moreover, it is equally easy to see that 
\[
   \chi_{[0,\infty)}\sigma \colon H_{\nu,0}(\R;P_W\otimes 1_{H}) \to  H_{\nu,0}(\R;P_W\otimes 1_{L_2(G;H)}),  u\mapsto \chi_{[0,\infty)}(\cdot)\sigma(u(\cdot))
\]
is Lipschitz continuous with Lipschitz constant bounded by $L$. Hence, by Theorem \ref{thm:stoch_int_evo}, we obtain that
\[
    H_{\nu,0}(\R;P_W\otimes 1_{H}) \ni u \mapsto \int_0^{\cdot} \sigma(u(s))dW(s)\in H_{\nu,0}(\R;P_W\otimes 1_{H})
\]
is Lipschitz continuous with eventual Lipschitz constant $0$. By Theorem \ref{thm:causal,adap}, we obtain that 
\[
   S_\nu [\dom(F_\nu)]=S_\nu[H_{\nu,0}(\R;P_W\otimes 1_{H})]\subseteq H_{\nu,0}(\R;P_W\otimes 1_{H})
\]
with $S_\nu$ from Theorem \ref{thm:solth}. Hence, the assertion follows from Corollary \ref{cor:perres}. (The independence of the solution of the parameter $\nu$ follows from Lemma \ref{rem:indep_of_evo} because the  multiplication with a cut-off function leaves the space of predictable processes invariant.)
\end{proof}

\begin{rem}\label{rem:different_Lipsch}(a) The above result is of course stable under Lipschitz continuous perturbations of the right-hand side. Indeed, let $B$ be an invariant evolutionary mapping, leaving the space of predictable processes invariant, with $B$ being causal and with the property that the eventual Lipschitz constant of $u\mapsto B(u)+\int_0^{(\cdot)}\sigma(u)dW(s)$ is strictly less than $c>0$, then the assertion of Theorem \ref{thm:spdes} remains the same, if one considers the equation
  \begin{equation}\label{eq:rem_spde}
    \overline{(\partial_{0,\nu}M(\partial_{0,\nu}^{-1})+A)}u= f+ \int_0^{(\cdot)}\sigma(u(s))dW(s)+B(u)
  \end{equation}
instead of \eqref{eq:thm_spde}. 
 
 (b) (Initial value problems) Similarly to the deterministic case treated in Lemma \ref{lem:IVP}, we can also formulate initial value problems for the special case $M(\partial_{0,\nu}^{-1})=M_0 +\partial_{0,\nu}^{-1}M_1$. Indeed the following initial value problem 
 \[
     \begin{cases}
         \overline{(\partial_{0,\nu}M_0 + M_1 +A)}u=f+ \int_0^{\cdot}\sigma(u(s))dW(s),& \text{ on }(0,\infty)\\
         M_0 u (0+)=M_0u_0,&\text{ in  }H_{-1}(A+1)
     \end{cases}
 \]
with given adapted $H$-valued process $f$ vanishing on $(-\infty,0]$ can be rephrased identifying $M_0u_0\in H\otimes L^2(\P)$. With this notation, the initial value problem above can be reformulated as 
 \[
   (\partial_{0,\nu}M_0 +M_1 +A)v = f+\int_0^{\cdot}\sigma(v(s)+\chi_{[0,\infty)}(s)u_0)dW(s)-\chi_{[0,\infty)}M_1u_0-\chi_{[0,\infty)}Au_0
 \]
as our appropriate realization of the initial value problem. Note that the map 
\[ v\mapsto \int_0^{\cdot}\sigma(v(s)+\chi_{[0,\infty)}(s)u_0)dW(s) \]
is still invariant evolutionary. Thus, solving for $v\in H_{\nu,0}(\R;H\otimes L^2(\P))$ gives, follow the lines of Lemma \ref{lem:IVP}, that $M_0v(0-)=0=M_0v(0+)\in H_{-1}(A+1)\otimes L^2(\P)$, which eventually leads to the attainment of the initial value $M_0 u (0+)=M_0u_0$ in $H_{-1}(A+1)\otimes L^2(\P)$.
\end{rem}

\begin{ex}
  As a particular example for Remark \ref{rem:different_Lipsch}(a), any deterministic Lipschitz continuous mapping from $H$ with values in $H$, is an eligible right-hand side in \eqref{eq:rem_spde}. These mappings have been used in \cite[Chapter 7]{dapratozabczyk}. 
  \end{ex}

\subsection{SPDEs with additive noise}\label{sec:additive}
In this section we investigate the solution theory of equations with additive noise, that is, the stochastic integral on the right-hand side in \eqref{eq:SPDE} is replaced by a stochastic process $X$: Let, in this section, $X$ be any $H$-valued stochastic process, more specifically, the map $(t,\omega)\mapsto X(t,\omega)$ belongs to $H_{\nu,0}(\R;H\otimes L^2(\P))$. This includes in particular stochastic processes on Hilbert spaces that have continuous or c\`adl\`ag paths, and in particular L\'evy processes and fractional Brownian motions. Hence, the equation to be solved is given by 
  \[
    \overline{(\partial_{0,\nu}M(\partial_{0,\nu}^{-1})+A)}u= f+X
  \]
Then we can apply Theorem \ref{thm:solth} to these equations and we will obtain a unique solution -- for any stochastic process $X$ whose paths are in $H_{\nu,0}(\R;H\otimes L^2(\P))$, which is only a condition on the integrability of its paths.

Now we we are going to show a more general result. With the notation as in Theorem \ref{thm:solth}, we consider the equation
  \begin{equation}\label{eq:thm_spde_add_k}
    \overline{(\partial_{0,\nu}M(\partial_{0,\nu}^{-1})+A)}u= f+\partial_{0,\nu}^k X,
  \end{equation}
where the right-hand side is an element of $H_{\nu,-k}(\R;H\otimes L^2(\P))$, for all $k\in\N_0$. Then, the noise term is interpreted as the $k$-times distributional time derivative of the paths of the stochastic process $X$. The space $H_{\nu,-k}(\R;H\otimes L^2(\P))$ is the distribution space belonging to $\partial_{0,\nu}$ realized as an operator in $H_{\nu,0}(\R;H\otimes L^2(\P))$. The solution theory for such a class of equations is then a corollary to the general solution theory in Theorem \ref{thm:solth}.

\begin{thm}\label{thm:solth_add}
  Assume that $M$ and $A$ satisfy the conditions in Theorem \ref{thm:solth}. Suppose that $X$ is a $H$-valued stochastic process whose paths belong to $H_{\nu,0}(\R;H\otimes L^2(\P))$. Then there exists a unique solution $u$ to \eqref{eq:thm_spde_add_k} in $H_{\nu,-k}(\R;H\otimes L^2(\P))$.
 \end{thm}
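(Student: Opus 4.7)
The plan is to deduce the theorem as a direct corollary of the deterministic solution theory by transplanting Theorem~\ref{thm:solth} to the tensor-product Hilbert space $H\otimes L^2(\P)$ and then extending the resulting solution operator to the whole Sobolev chain built over $\partial_{0,\nu}$. The first move is to observe that all hypotheses of Theorem~\ref{thm:solth} survive the canonical extension of $M$ and $A$ by $1_{L^2(\P)}$ (see Remark~\ref{rem:canext}): skew-self-adjointness of $A\otimes 1_{L^2(\P)}$, and the positive-definiteness estimate \eqref{eq:PDofM} for $M(z)\otimes 1_{L^2(\P)}$, are both trivially preserved. This already yields a continuously invertible closure $L\coloneqq\overline{\partial_{0,\nu}M(\partial_{0,\nu}^{-1})+A}$ on $H_{\nu,0}(\R;H\otimes L^2(\P))$ with bounded inverse $S_\nu$.

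Second, I would appeal to the chain extension of $S_\nu$ already announced in Remark~\ref{rem:integratedPDE}(a). The key mechanism is that $\partial_{0,\nu}^{\pm 1}$ commutes with $L$: with $\partial_{0,\nu}M(\partial_{0,\nu}^{-1})$ via the functional calculus \eqref{eq:defM}, and with $A$ because the latter acts only on the spatial and probabilistic fibres of the tensor product. Since $\partial_{0,\nu}$ is a unitary from $H_{\nu,j}$ to $H_{\nu,j-1}$ for every $j\in\Z$, the conjugates $\partial_{0,\nu}^{-j}S_\nu\partial_{0,\nu}^{j}$ assemble into a bounded, bijective extension of $S_\nu$ to each $H_{\nu,j}(\R;H\otimes L^2(\P))$, whose inverse is the closure of $\partial_{0,\nu}M(\partial_{0,\nu}^{-1})+A$ in the corresponding space.

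With this in hand, the final step is purely bookkeeping. By definition of the Sobolev chain, the hypothesis that the paths of $X$ lie in $H_{\nu,0}(\R;H\otimes L^2(\P))$ forces $\partial_{0,\nu}^k X\in H_{\nu,-k}(\R;H\otimes L^2(\P))$, so the right-hand side of \eqref{eq:thm_spde_add_k} lies in $H_{\nu,-k}$, and the unique solution is
\[
u \;=\; S_\nu\bigl(f+\partial_{0,\nu}^k X\bigr),
\]
with $S_\nu$ read in the degree-$(-k)$ extension. The only point that deserves real attention is the verification that this chain extension is still inverse to the closure of $\partial_{0,\nu}M(\partial_{0,\nu}^{-1})+A$ acting as a map $H_{\nu,-k+1}\cap H_{\nu,0}(\R;H_{-1}(A+1)\otimes L^2(\P)) \to H_{\nu,-k}$; but this is precisely the content of Remark~\ref{rem:integratedPDE}(a) and can be invoked directly, so no further work beyond referring to it is required.
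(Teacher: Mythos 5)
Your proposal is correct and follows essentially the same route as the paper: both reduce the statement to the observation that the solution operator $S_\nu$ of Theorem \ref{thm:solth} (applied on the tensor-product space $H\otimes L^2(\P)$) extends, via its commutation with $\partial_{0,\nu}^{\pm1}$, to a continuous linear operator on $H_{\nu,-k}(\R;H\otimes L^2(\P))$ as recorded in Remark \ref{rem:integratedPDE}(a), after which $u=S_\nu(f+\partial_{0,\nu}^kX)$ is the unique solution. The paper's proof is just a one-line invocation of that remark; your write-up supplies the details it leaves implicit.
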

\begin{proof}  
The assertion follows once observed that $(\partial_0M(\partial_0^{-1})+A)^{-1}$ can be realized as a continuous linear operator in $H_{\nu,-k}(\R;H\otimes L^2(\P))$ with Lipschitz constant bounded above by $1/c$ (see also Remark \ref{rem:integratedPDE}).
\end{proof} 

We note the main achievement of this section. The right-hand side has to be in $H_{\nu,-k}(\R;H\otimes L^2(\P))$, only. Note that there are no stochastic integrals involved, neither did we make any assumption on the regularity of the noise term $\partial_{0,\nu}^k X$, other than that it is the $k$-th time-derivative of a stochastic process $X$. Therefore we have found a way to make sense of stochastic differential equations in Hilbert spaces where the random noise can be a very irregular object, given by the distributional derivative of any stochastic process (L\'evy, Markov etc.) with only the assumption of integrability of its paths. The solution to these equations is an element of the space of stochastic distributions (in the time argument).

\section{Examples}\label{sec:examples}
In this section, we shall give some examples for the solution theory presented above. We emphasize, that -- at least in principle -- the only thing to be taken care of is the formulation of the respective problem in an appropriate way as an operator equation in appropriate Hilbert spaces. The way how we do it is to start with the equation given formally as a stochastic differential equation and, after some algebraic manipulations, we shall give the appropriate replacement to be solved with the solution theory based on Theorem \ref{thm:spdes} or Theorem \ref{thm:solth_add}. In the whole section, we let $W$ be a $G$-valued Wiener process for some separable Hilbert space $G$ and we assume that $\sigma\colon H_0 \to L_2(G,H_0)$ is Lipschitz continuous, where $H_0$ will be clear from the context. For simplicity of the exposition, we assume that we only have a stochastic term containing $\sigma$ on the right-hand side and null initial conditions. The way how to incorporate a path-wise perturbation and/or non-zero initial conditions was shown in Remark \ref{rem:different_Lipsch}.

For the stochastic heat as well as for the stochastic wave equation, we justify our findings and put them into perspective of more classical solution concepts. For this, we note a general observation: Although the solutions constructed in this exposition live on the whole real time line, the support of the solutions is concentrated on the positive real axis provided the one of the right-hand side is. Indeed, this is a consequence of causality of the respective solution operators.

\subsection{Stochastic heat equation}\label{sec:SHE}
We consider the following SPDE in an open set $D\subseteq \Rd$
\begin{align}\label{eq:stoch_heat}
  \partial_0 u(t) - \Delta u(t) & = \sigma(u(t))\dot W(t), \notag\\
     u(0)=0, u|_{\partial D} & =0,
\end{align}
where $\Delta$ is the Laplace operator acting on the deterministic spatial variables $x\in D$ only. This equation has been studied in \cite{walsh}, see also \cite[Example 7.6]{dapratozabczyk} for a treatment in Hilbert spaces.  We establish the boundary condition in the way that $u\in H_0^1(D)$, the Sobolev space of the once weakly differentiable functions, which may be approximated in the $H^1(D)$-norm by smooth functions with compact support contained in $D$. Before we formulate the heat equation in our operator-theoretic setting, we need to introduce some differential operators.

\begin{defn}\label{def:operators}
   We define
   \begin{align*}
       \grad_c \colon C_c^\infty(D)\subseteq L^2(\lambda_D )&\to L^2(\lambda_D )^d\\
                                                   \phi&\mapsto (\partial_j\phi)_{j\in\{1,\ldots,d\}},\\
       \grad \colon H^1(D)\subseteq L^2(\lambda_D )&\to L^2(\lambda_D )^d\\
                                                   \phi&\mapsto (\partial_j\phi)_{j\in\{1,\ldots,d\}}                                                   
   \end{align*}
   and let $\div\coloneqq -\grad_c^*$, $\interior\div\coloneqq -\grad^*$ as well as $\interior\grad\coloneqq \overline{\grad}_c$.
\end{defn}

Throughout this section, we will use these operators to reformulate the SPDEs in an adequate way. The meaning of these operators is that the ones with the superscript ``$\interior{\ }$'' carry the homogeneous boundary conditions on $\partial D$: $\interior{\div}$ carries zero Neumann boundary conditions and $\interior{\grad}$ carries zero Dirichlet boundary conditions. With these operators we can rewrite the Laplacian with homogeneous Dirichlet boundary conditions as $\Delta = \div\interior{\grad}$.

We may now come back to the stochastic heat equation. We perform an algebraic manipulation to reformulate it as a system of first order SPDEs. First, we apply the operator $\partial_0^{-1}$ to equation \eqref{eq:stoch_heat}, see also Remark \ref{rem:integratedPDE}(a), and we arrive at 
\begin{equation}\label{eq:SHE1}
  u(t) - \partial_0^{-1}\Delta u(t) = \partial_0^{-1}\sigma(u(t))\dot W(t).
\end{equation}
We interpret the right-hand side as the following stochastic integral 
\[ \partial_0^{-1}\sigma(u(t))\dot W(t) := \int_0^\cdot \sigma(u)dW. \]
Observe that $\partial_0^{-1}$ and any spatial (partial differential) operator commute (see also Lemma \ref{l:dcp} below for a more precise statement). Therefore, formally, we can rewrite the second term in \eqref{eq:SHE1} as 
\[ - \partial_0^{-1}\Delta u = -\partial_0^{-1}\div\interior{\grad}u = -\div\partial_0^{-1}\interior{\grad}u. \]
Then, setting $q \coloneqq -\partial_0^{-1} \interior{\grad} u$, we arrive at the following first-order system 
\begin{equation}\label{eq:SHEsystem}
 \left(\partial_0 \begin{pmatrix}
   0 & 0 \\ 0 & 1
 \end{pmatrix} + \begin{pmatrix}
   1 & 0 \\ 0 & 0
 \end{pmatrix}+\begin{pmatrix}
   0 & \div \\ \interior{\grad} & 0
 \end{pmatrix}\right)\begin{pmatrix}
   u \\ q
 \end{pmatrix}=\begin{pmatrix}
   \int_0^{\cdot}\sigma(u)dW \\ 0
 \end{pmatrix},
\end{equation}
which we think of being an appropriate replacement for \eqref{eq:stoch_heat}. 
 
Assuming that $\sigma\colon L^2(\lambda_D )\to L_2(G,L^2(\lambda_D ))$ to be Lipschitz continuous, we can use Theorem \ref{thm:spdes} to show the existence and uniqueness of solutions to this system. The only things still to be checked are whether
 \[
  A = \begin{pmatrix}
   0 & \div \\ \interior{\grad} & 0
 \end{pmatrix}
 \]
 is skew-self-adjoint and whether 
 \[
   M(z)\coloneqq \begin{pmatrix}
   0 & 0 \\ 0 & 1
 \end{pmatrix} + z\begin{pmatrix}
   1 & 0 \\ 0 & 0
 \end{pmatrix} 
 \]
satisfies condition \eqref{eq:condition_M}, for some $r>0$. The former statement being easy to check using the definition of $\div$ and $\interior{\grad}$ as skew-adjoints of one another in Definition \ref{def:operators} and upon relying on Remark \ref{rem:canext}. In order to prove the validity of condition \eqref{eq:condition_M}, we let $(\phi,\psi)\in L^2(\lambda_D )\oplus L^2(\lambda_D )^d$ and compute using $z^{-1}=\ii t+\mu$ if $z\in B(r,r)$ for some $\mu>\frac{1}{2r}$ and $t\in \R$
\begin{multline}\label{eq:M(z)}
  \Re\Big\langle z^{-1}M(z)\begin{pmatrix}
                        \phi \\ \psi
                       \end{pmatrix}, \begin{pmatrix}
                        \phi \\ \psi
                       \end{pmatrix}\Big\rangle_{L^2(\lambda_D )^{d+1}} = \Re(\langle (it+\mu)\psi,\psi\rangle_{L^2(\lambda_D )^{d}})+\Re \langle\phi,\phi\rangle_{L^2(\lambda_D )}\\ =\mu\|\psi\|_{L^2(\lambda_D )^d}^2+\|\phi\|_{L^2(\lambda_D )}^2 \geq \min\big\{1,(2r)^{-1}\big\}\left\|\begin{pmatrix}
                        \phi \\ \psi
                       \end{pmatrix}\right\|_{L^2(\lambda_D )^{d+1}}^2,
\end{multline}
which yields \eqref{eq:condition_M}. Using Theorem \ref{thm:spdes}, we have thus proven the following. 

\begin{cor}\label{cor:SHE}
  With the notations from the beginning of this section, assume that $\sigma\colon L^2(\lambda_D )\to L_2(G,L^2(\lambda_D ))$ satisfies  
  \[ \|\sigma(u)-\sigma(v)\|_{L_2}\leq L\|u-v\|_{H} \]
  for all $u,v\in H$ and some $L\geq0$.

  Then there exists $\nu_1\geq0$ such that for all $\nu>\nu_1$, the equation 
  \begin{equation*}
 \overline{\left(\partial_0 \begin{pmatrix}
   0 & 0 \\ 0 & 1
 \end{pmatrix} + \begin{pmatrix}
   1 & 0 \\ 0 & 0
 \end{pmatrix}+\begin{pmatrix}
   0 & \div \\ \interior{\grad} & 0
 \end{pmatrix}\right)}\begin{pmatrix}
   u \\ q
 \end{pmatrix}=\begin{pmatrix}
   \int_0^{\cdot}\sigma(u)dW \\ 0
 \end{pmatrix},
 \end{equation*}
has a unique solution $(u,q)\in H_{\nu,0}(\R;P_W\otimes 1_{L^2(\lambda_D )^{d+1}})$, which is independent of $\nu$. (For a definition of $P_W$ one might recall Remark \ref{rem:filtisfilter}.)
\end{cor}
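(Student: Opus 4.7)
The plan is to apply Theorem \ref{thm:spdes} directly, after verifying that the three structural hypotheses are met: skew-self-adjointness of the spatial operator, the positive definiteness condition on the material law, and Lipschitz continuity of the effective noise coefficient on the product space.

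First, I would set $H := L^2(\lambda_D )^{d+1}$,
\[ A := \begin{pmatrix} 0 & \div \\ \interior{\grad} & 0 \end{pmatrix}, \qquad M(z) := \begin{pmatrix} 0 & 0 \\ 0 & 1 \end{pmatrix} + z\begin{pmatrix} 1 & 0 \\ 0 & 0 \end{pmatrix}, \]
with $A$ understood as the canonical extension to $H_{\nu,0}(\R;H)$ of the corresponding densely defined operator on $H$ (Remark \ref{rem:canext}). The skew-self-adjointness of $A$ follows from the duality relations $\div = -\grad_c^*$ and $\interior{\grad} = -\interior{\div}{}^*$ recorded in Definition \ref{def:operators}, together with the standard fact that a block-off-diagonal operator whose off-diagonal entries are negative adjoints of one another is skew-self-adjoint on the domain $\dom(\interior{\grad})\oplus\dom(\div)$.

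Next, the map $M$ is a polynomial in $z$ with values in $L(H)$ and hence is analytic and bounded on any $B(r,r)$. For the coercitivity condition \eqref{eq:condition_M}, the computation \eqref{eq:M(z)} already carried out in the main text gives, for every $z=(\ii t+\mu)^{-1}\in B(r,r)$ with $\mu>1/(2r)$,
\[ \Re\Big\langle z^{-1}M(z)\begin{pmatrix}\phi \\ \psi\end{pmatrix}, \begin{pmatrix}\phi \\ \psi\end{pmatrix}\Big\rangle \;\geq\; \min\bigl\{1,(2r)^{-1}\bigr\}\,\bigl\|(\phi,\psi)\bigr\|_{H}^{2}, \]
so the condition holds with $c=\min\{1,(2r)^{-1}\}>0$. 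Choosing $r>0$ sufficiently large guarantees that the hypothesis $r>\tfrac{1}{2\nu_0}$ of Theorem \ref{thm:spdes} can be fulfilled for $\nu_0$ as large as required.

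The only remaining point is the matching of the stochastic right-hand side with the vector-valued framework of Theorem \ref{thm:spdes}. To this end I would define the trivial extension
\[ \tilde\sigma \colon H \to L_2\bigl(G,H\bigr), \qquad \tilde\sigma\begin{pmatrix}u \\ q\end{pmatrix}g := \begin{pmatrix} \sigma(u)g \\ 0 \end{pmatrix}\quad (g\in G). \]
Because the Hilbert--Schmidt norm is computed componentwise, $\|\tilde\sigma(u_1,q_1)-\tilde\sigma(u_2,q_2)\|_{L_2(G,H)} = \|\sigma(u_1)-\sigma(u_2)\|_{L_2(G,L^2(\lambda_D ))}\leq L\|u_1-u_2\|_{L^2(\lambda_D )}\leq L\|(u_1,q_1)-(u_2,q_2)\|_H$, so $\tilde\sigma$ inherits the Lipschitz constant $L$ of $\sigma$. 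The stochastic integral on the right-hand side of the system in the statement of the corollary coincides with $\int_0^{\cdot}\tilde\sigma(u,q)(s)\,dW(s)$. With $H$, $A$, $M$ and $\tilde\sigma$ in hand, all hypotheses of Theorem \ref{thm:spdes} are satisfied, and it furnishes $\nu_1\geq 0$ and, for every $\nu>\nu_1$, a unique solution $(u,q)\in H_{\nu,0}(\R;P_W\otimes 1_H)$, independent of $\nu$ in the sense of Remark \ref{rem:indep_of_solop}. No part of this is expected to present a real obstacle; the only point requiring a moment of care is the formal embedding of the scalar-valued Lipschitz map $\sigma$ into the vectorial framework, which is resolved by the trivial extension above.
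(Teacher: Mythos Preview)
Your proof is correct and follows essentially the same route as the paper: verify skew-self-adjointness of $A$ via the duality built into Definition \ref{def:operators}, check the coercitivity of $M$ through the computation \eqref{eq:M(z)}, and apply Theorem \ref{thm:spdes}. One small correction: the relation you need is $\interior{\grad} = -\div^*$ (equivalently $\div = -\interior{\grad}^*$, which already follows from your first cited identity $\div=-\grad_c^*$ together with $\interior{\grad}=\overline{\grad_c}$), not $\interior{\grad} = -\interior{\div}{}^*$; your explicit trivial extension $\tilde\sigma$ is a point the paper leaves implicit.
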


The solution theory is not limited to the case of partial differential operators with constant coefficients. The following remark shows how to invoke partial differential operators with variable coefficients.

 \begin{rem}\label{r:vc}
   Starting out with a deterministic bounded measurable matrix-valued coefficient function $a\colon D\to \mathbb{C}^{d\times d}$ being pointwise self-adjoint and uniformly strictly positive, that is, $\langle a(x)\xi,\xi\rangle\geq c\langle \xi,\xi\rangle$ for all $x\in D$, $\xi\in \Rd$ and some $c>0$, we consider the stochastic heat equation
  \[
     \partial_0 u - \div a\interior{\grad} u = \sigma(u)\dot W,
  \]
with the same vanishing boundary and initial data as above. Substituting $a^{-1} q = -\interior{\grad} \partial_0^{-1}u$,
we arrive at the system
\[ (\partial_0 M(\partial_0^{-1})+A)\begin{pmatrix} u\\ q\end{pmatrix} = \begin{pmatrix}\int_0^\cdot \sigma(u)dW \\0 \end{pmatrix}, \]
with the same $A$ as in the constant coefficient case and 
\[ M(z)=\begin{pmatrix}
   0 & 0 \\ 0 & a^{-1}
 \end{pmatrix} + z\begin{pmatrix}
   1 & 0 \\ 0 & 0
 \end{pmatrix}. \]
Under the conditions on $a$, we can show the existence and uniqueness of solutions using Theorem \ref{thm:spdes}. (Note that $a(x)=a(x)^*\geq c>0$ implies 
$a(x)^{-1}\geq c/\|a(x)\|^2$ in the sense of positive definiteness.)
\end{rem}

\subsubsection*{Connection to variational solutions}\label{su:cvs}
 We will compare our solution to the one defined in \cite[Definition 2.1]{sanzvuillermot} (with $g=0$ and $\phi=0$), see also \cite{prevotroeckner,rozowskii}. We understand the following notion as a variational/weak solution to the heat equation \eqref{eq:stoch_heat}:

\begin{defn}\label{def:variational}
  A predictable stochastic process $u$ supported on $[0,\infty)$ with values in $H^1_0(D)$ is called a \emph{variational solution to the stochastic heat equation} if 
  \[ \int_0^T\|u(t)\|^2_{H^1(D)}dt < \infty \]
  for all $0\leq T<\infty$ almost surely, has at most exponential growth in $T$ almost surely (with some exponential growth bound $\nu>0$), and for all $\eta\in H_0^1(D)$ the following equation holds almost surely for all $t\geq 0$ 
	\begin{equation}\label{eq:defvarsol} 
	  \langle u(t),\eta\rangle_{L^2(\lambda_D)} +\int_{0}^t \langle \grad u(\tau),\grad\eta\rangle_{L^2(\lambda_D )^{d}}d \tau = \sum_{k\in\N}\int_0^t \sqrt{\lambda_k} \langle \sigma(u(\tau))e_k,\eta\rangle_{L^2(\lambda_D )} dW_k(\tau),
	\end{equation}
  where $(e_k)_{k\in\N}$ is an orthogonal basis of $L^2(\lambda_D )$, $(\lambda_k)_{k\in\N}\in\ell_1(\N)$ is the sequence of eigenvalues of the covariance operator of $W$, and $(W_k)_{k\in\N}$ is a sequence of independent one-dimensional Brownian motions.
\end{defn}

In the next few lines, we will show that any variational solution in the sense of Definition \ref{def:variational} is a solution constructed in Corollary \ref{cor:SHE}. In order to avoid unnecessarily cluttered notation, we shall occasionally neglect referring to the real numbers in the notation of the vector-valued spaces to be studied in the following. For instance, for $H_{\nu,0}(\mathbb{R};H_0^1(D))$ we write $H_{\nu,0}(H_0^1(D))$ instead.

\begin{prop} Let $\nu>0$, $u\in H_{\nu,0}(\mathbb{R};H_0^1(D)\otimes L^2(\P))$ a variational solution to the stochastic heat equation. Then, $(u,-\partial_{0}^{-1}\interior{\grad} u)$ solves the equation for $(u,q)$ given in Corollary \ref{cor:SHE}.
\end{prop}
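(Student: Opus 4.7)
The plan is to take $q \coloneqq -\partial_{0}^{-1}\interior{\grad} u$ as the candidate and verify the two rows of the first-order system from Corollary \ref{cor:SHE} separately. First I would check the function-space and predictability conditions. Since $u$ is a variational solution, $u$ is $P_W$-predictable with paths in $L^2_{\text{loc}}([0,\infty); H_0^1(D))$ and at most exponential growth, so $u, \interior{\grad} u \in H_{\nu,0}(\R; L^2(\lambda_D)^{d+1}\otimes L^2(\P))$ for $\nu$ large enough (and both are supported on $[0,\infty)$). Since $\partial_{0}^{-1}$ is a causal continuous linear operator on $H_{\nu,0}(\R; L^2(\lambda_D)^d)$ (Lemma \ref{lem:2.1}, Remark \ref{r:pif}), Theorem \ref{thm:causal,adap} gives $q \in H_{\nu,0}(\R; P_W \otimes 1_{L^2(\lambda_D)^d})$. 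Thus $(u,q)$ lies in the solution space of Corollary \ref{cor:SHE}.

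\textbf{Second row.} By definition, $q \in \dom(\partial_{0,\nu})$ and $\partial_{0,\nu} q = -\interior{\grad} u$, so $\partial_{0,\nu} q + \interior{\grad} u = 0$ in $H_{\nu,0}(\R; L^2(\lambda_D)^d \otimes L^2(\P))$, which is the second row of the system.

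\textbf{First row.} I rewrite the variational identity \eqref{eq:defvarsol}: for $\eta \in H_0^1(D) = \dom(\interior{\grad})$, since $\grad u(\tau) = \interior{\grad} u(\tau)$ and $\grad\eta = \interior{\grad}\eta$, a path-wise Fubini (using $\interior{\grad} u \in L^2_{\text{loc}}([0,\infty); L^2(\lambda_D)^d)$) gives
\[
\int_0^t \langle \grad u(\tau), \grad\eta\rangle_{L^2(\lambda_D)^d}\, d\tau = \langle \partial_{0}^{-1}(\interior{\grad} u)(t), \interior{\grad}\eta\rangle_{L^2(\lambda_D)^d} = -\langle q(t), \interior{\grad}\eta\rangle_{L^2(\lambda_D)^d},
\]
for all $t\ge 0$, a.s. Likewise the sum on the right of \eqref{eq:defvarsol} equals $\langle (\int_0^\cdot \sigma(u)\,dW)(t), \eta\rangle_{L^2(\lambda_D)}$. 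Thus the variational equation becomes
\[
\langle q(t), \interior{\grad}\eta\rangle_{L^2(\lambda_D)^d} = \Big\langle \int_0^t \sigma(u(s))\,dW(s) - u(t),\, \eta \Big\rangle_{L^2(\lambda_D)} \qquad (\eta \in H_0^1(D)),
\]
a.s., for each $t \geq 0$.

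\textbf{Closing argument.} For such fixed $t$ and $\omega$, the right-hand side defines a bounded linear functional of $\eta$ on $L^2(\lambda_D)$, dominated by $\bigl(\|u(t)\|_{L^2} + \|\int_0^t \sigma(u)\,dW\|_{L^2}\bigr)\|\eta\|_{L^2}$. Since $\interior{\grad}^{*} = -\div$ by the definitions in Definition \ref{def:operators} (using $\interior{\grad}=\overline{\grad}_c$ and $\div = -\grad_c^{*}$), the Riesz representation theorem therefore forces $q(t) \in \dom(\div)$ with
\[
u(t) + \div q(t) = \Big(\int_0^\cdot \sigma(u(s))\,dW(s)\Big)(t) \quad \text{in } L^2(\lambda_D), \text{ a.s., a.a.\ } t \geq 0.
\]
Multiplying by the exponential weight and integrating in time (using Fubini and the moment bounds implicit in the Lipschitz continuity of $\sigma$) promotes this to an equality in $H_{\nu,0}(\R; L^2(\lambda_D)\otimes L^2(\P))$, yielding the first row.

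\textbf{Main obstacle.} The core difficulty is the last step: the variational formulation only provides duality testing against $\interior{\grad}\eta$, whereas the first-order system demands the stronger statement $q(t) \in \dom(\div)$ in $L^2$. Upgrading the weak identity to an $L^2$-equation via $\div = -\interior{\grad}^*$ and Riesz is what makes the two formulations equivalent; once this is done, the remainder of the argument is essentially bookkeeping of function spaces and predictability.
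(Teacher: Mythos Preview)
Your proposal is correct and follows essentially the same route as the paper: identify $\int_0^t \interior{\grad}u\,d\tau$ with $\partial_0^{-1}\interior{\grad}u$, rewrite the variational identity as $\langle q,\interior{\grad}\eta\rangle = \langle \int_0^\cdot\sigma(u)\,dW - u,\eta\rangle$, and then read off $q\in\dom(\div)$ from $\div=-\interior{\grad}^*$. The paper does this directly in $H_{\nu,0}$ by testing against tensor products $v\otimes\eta$ with $v\in H_{\nu,0}(\R)$ and extending by density to all $\phi\in H_{\nu,0}(\R;H_0^1(D))$, whereas you work pointwise in $t$ first and then lift; both arrive at the same place, and the paper's version avoids the (routine, separability-based) swap of the quantifiers ``for all $\eta$, a.s.'' to ``a.s., for all $\eta$'' that your pointwise argument tacitly uses. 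One terminological quibble: what you invoke is not really the Riesz representation theorem but simply the definition of the adjoint $\interior{\grad}^*=-\div$; the representative of the functional is already given explicitly.
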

\begin{proof}
  Since $u$ is supported on $[0,\infty)$ only, we get, using Remark \ref{r:pif},
\[
   \int_0^t \interior{\grad} u(\tau) d\tau = \int_{-\infty}^t \interior{\grad} u(\tau) d\tau = \partial_{0}^{-1} \interior{\grad} u (t).
\]
Next, by Definition \ref{defn:stoch_int}, we obtain
\[
      \sum_{k\in\N}\int_0^t \sqrt{\lambda_k} \langle \sigma(u(\tau))e_k,\eta\rangle_{L^2(\lambda_D )} dW_k(\tau) = \langle \int_0^t   \sigma(u(\tau)) dW(\tau) , \eta \rangle_{L^2(\lambda_D )}.
\]
Hence, for all $v\in H_{\nu,0}(\mathbb{R}), \eta\in H_0^1(D)$, we obtain from \eqref{eq:defvarsol}
\[
   \langle u, v \eta \rangle_{H_{\nu,0}(L^2(\lambda_D ))} + \langle \partial_0^{-1} \interior{\grad} u, v \interior{\grad} \eta\rangle_{H_{\nu,0}(L^2(\lambda_D )^d)} = \langle \int_0^{(\cdot)} \sigma(u(\tau)) dW(\tau), v \eta\rangle_{H_{\nu,0}(L^2(\lambda_D ))}.
\]
In consequence, by linearity and continuity, we obtain for all $\phi\in H_{\nu,0}(\mathbb{R};H_0^1(D))$
\[
   \langle u, \phi \rangle_{H_{\nu,0}(L^2(\lambda_D ))} + \langle \partial_0^{-1} \interior{\grad} u,  \interior{\grad} \phi\rangle_{H_{\nu,0}(L^2(\lambda_D )^d)} = \langle \int_0^{(\cdot)} \sigma(u(\tau)) dW(\tau), \phi\rangle_{H_{\nu,0}(L^2(\lambda_D ))}.
\]
Substituting $q\coloneqq -\partial_0^{-1} \interior{\grad} u$, we obtain $\partial_0 q =- \interior{\grad} u$. Hence, $(u,q)$ solves the equation in Corollary \ref{cor:SHE} (even without the closure bar).
\end{proof}

For the reverse direction, we need an additional regularity assumption. Before commenting on this, we shall derive an equality, which is almost the one in \eqref{eq:defvarsol}. We need the following prerequisite of abstract nature.

\begin{lem}\label{l:dcp} Let $H_0$, $H_1$ be Hilbert spaces, $\nu>0$, $C\colon \dom(C)\subseteq H_0 \to H_1$ densely defined, closed. Then $\overline{\partial_0^{-1}C} = C\partial_0^{-1}$.
\end{lem}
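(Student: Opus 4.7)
The plan is to establish the equality by proving two inclusions: first $\overline{\partial_{0}^{-1}C}\subseteq C\partial_{0}^{-1}$ and second $C\partial_{0}^{-1}\subseteq\overline{\partial_{0}^{-1}C}$.

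For the first inclusion, the crucial step is to show that whenever $u$ lies in the domain of the canonically extended operator $C$ on $H_{\nu,0}(\mathbb{R};H_0)$ (that is, $1\otimes C$ on $H_{\nu,0}(\mathbb{R})\otimes H_0$, in the spirit of Remark \ref{rem:canext}), one has $\partial_{0}^{-1}u\in\dom(C)$ together with $C\partial_{0}^{-1}u=\partial_{0}^{-1}Cu$. This identity is immediate on simple tensors $u=\phi\otimes h$ with $\phi\in H_{\nu,0}(\mathbb{R})$ and $h\in\dom(C)$, since both sides equal $(\partial_{0}^{-1}\phi)\otimes Ch$. Simple tensors with $h\in\dom(C)$ are dense in the graph space of $1\otimes C$. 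Thus, given $u\in\dom(C)$, I would choose a sequence $(u_n)_n$ of such simple tensors with $u_n\to u$ in $H_{\nu,0}(\mathbb{R};H_0)$ and $Cu_n\to Cu$ in $H_{\nu,0}(\mathbb{R};H_1)$. By the boundedness of $\partial_{0}^{-1}$ (Lemma \ref{lem:2.1}), $\partial_{0}^{-1}u_n\to\partial_{0}^{-1}u$ and $\partial_{0}^{-1}Cu_n\to\partial_{0}^{-1}Cu$. Since $C\partial_{0}^{-1}u_n=\partial_{0}^{-1}Cu_n$ and $C$ is closed, it follows that $\partial_{0}^{-1}u\in\dom(C)$ and $C\partial_{0}^{-1}u=\partial_{0}^{-1}Cu$. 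This establishes $\partial_{0}^{-1}C\subseteq C\partial_{0}^{-1}$. Because $\partial_{0}^{-1}$ is bounded and $C$ is closed, $C\partial_{0}^{-1}$ is itself closed, whence $\overline{\partial_{0}^{-1}C}\subseteq C\partial_{0}^{-1}$.

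For the reverse inclusion, take $u\in\dom(C\partial_{0}^{-1})$, that is, $u\in H_{\nu,0}(\mathbb{R};H_0)$ with $\partial_{0}^{-1}u\in\dom(C)$. Regularize by setting $u_\varepsilon\coloneqq (1+\varepsilon C^*C)^{-1}u$ for $\varepsilon>0$. The operator $(1+\varepsilon C^*C)^{-1}$ is bounded and self-adjoint, maps into $\dom(C^*C)\subseteq\dom(C)$, and commutes with $\partial_{0}^{-1}$ because both factor through disjoint tensor factors ($1\otimes C^*C$ respectively $\partial_{0}^{-1}\otimes 1$). Hence $u_\varepsilon\in\dom(\partial_{0}^{-1}C)$, and using the first step
\[
   \partial_{0}^{-1}Cu_\varepsilon = C\partial_{0}^{-1}u_\varepsilon = C(1+\varepsilon C^*C)^{-1}\partial_{0}^{-1}u = (1+\varepsilon CC^*)^{-1}C\partial_{0}^{-1}u,
\]
where the last equality is the standard intertwining identity $C(1+\varepsilon C^*C)^{-1}=(1+\varepsilon CC^*)^{-1}C$ valid on $\dom(C)$. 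As $\varepsilon\to 0$, strong convergence of the resolvents of the self-adjoint operators $C^*C$ and $CC^*$ yields $u_\varepsilon\to u$ in $H_{\nu,0}(\mathbb{R};H_0)$ and $\partial_{0}^{-1}Cu_\varepsilon\to C\partial_{0}^{-1}u$ in $H_{\nu,0}(\mathbb{R};H_1)$. Consequently $u\in\dom(\overline{\partial_{0}^{-1}C})$ with $\overline{\partial_{0}^{-1}C}\,u=C\partial_{0}^{-1}u$, which yields the reverse inclusion.

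The main obstacle I foresee is the careful justification of the two tensor-product commutations used above: the density of simple tensors in the graph space of $1\otimes C$, and the commutation of $\partial_{0}^{-1}$ with bounded Borel functions of $C^*C$ (and $CC^*$). Both rest on the fact that the canonical extension of $C$ acts on the state-space factor $H_0$ while $\partial_{0}^{-1}$ acts only on the time factor $H_{\nu,0}(\mathbb{R})$. The cleanest way to make this precise is to note that, in the Fourier--Laplace picture, $\partial_{0}^{-1}$ becomes multiplication by the scalar function $(\ii m+\nu)^{-1}$, which trivially commutes with any pointwise-in-frequency action of a bounded Borel function of $C^*C$ on $H_0$; pulling this commutation back through $\mathcal{L}_\nu$ gives the required identity on the original space.
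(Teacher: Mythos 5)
Your proposal is correct and follows essentially the same route as the paper's proof: the first inclusion via boundedness of $\partial_0^{-1}$ and closedness of $C$ (hence closedness of $C\partial_0^{-1}$), and the reverse inclusion via the regularization $u_\eps=(1+\eps C^*C)^{-1}u$, the intertwining identity $C(1+\eps C^*C)^{-1}=(1+\eps CC^*)^{-1}C$ on $\dom(C)$, and strong resolvent convergence as $\eps\to 0$. The only difference is that you spell out the tensor-product density argument behind $\partial_0^{-1}C\subseteq C\partial_0^{-1}$, which the paper leaves implicit.
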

\begin{proof} The operator $\partial_0^{-1}$ is continuous from $H_{\nu,0}(H_0)$ into itself and the operator $C$ is closed. Hence, $\overline{\partial_0^{-1}C}\subseteq C\partial_0^{-1}$. On the other hand, note that $(1+\eps C^*C)^{-1}\to 1$ and $(1+\eps CC^*)^{-1}\to 1$ in the strong operator topology as $\eps\to 0$. Thus, for $u\in \dom(C\partial_0^{-1})$ we let $u_\eps\coloneqq (1+\eps C^*C)^{-1}u$  and get $u_\eps\in \dom(C)=\dom(\partial_0^{-1} C)$. Moreover, $C (1+\eps C^*C)^{-1}$ is a continuous operator and $(1+\eps CC^*)^{-1} C\subseteq C (1+\eps C^*C)^{-1}$. Hence, for $\eps>0$
\[
   \partial_0^{-1} C u_\eps = \partial_0^{-1} C (1+\eps C^*C)^{-1} u = C (1+\eps C^*C)^{-1}\partial_0^{-1} u =(1+\eps CC^*)^{-1}C\partial_0^{-1} u.
\]
Letting $\eps\to0$ in the latter equality, we obtain $u\in \dom(\overline{\partial_0^{-1}C})$ and $\overline{\partial_0^{-1}C} u =C\partial_0^{-1} u$, which yields the assertion.
\end{proof}

\begin{thm}\label{t:solvar} Let $(u,q)\in H_{\nu,0}(\mathbb{R};(L^2(\lambda_D )\times L^2(\lambda_D )^d)\otimes L^2(\P))$ be a predictable process solving the equation in Corollary \ref{cor:SHE}. Then $q\in \dom(\div)$ and $u\in \dom(\interior{\grad}\partial_0^{-1})$, $u=-\interior{\grad}\partial_0^{-1}q$ and 
\begin{equation}\label{eq:defvarsol2} 
	  \langle u(\cdot),\eta\rangle_{L^2(\lambda_D ))} +\langle \grad \int_{0}^{(\cdot)} u(\tau)d \tau,\grad\eta\rangle_{L^2(\lambda_D )^{d}} = \sum_{k\in\N}\int_0^{(\cdot)} \sqrt{\lambda_k} \langle \sigma(u(\tau))e_k,\eta\rangle_{L^2(\lambda_D )} dW_k(\tau),
\end{equation}almost surely.
\end{thm}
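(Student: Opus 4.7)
The plan is to unpack the first-order system in Corollary \ref{cor:SHE} into its two component equations, and then pair the first of these with a test function $\eta\in H_0^1(D)$ to obtain the variational identity. Writing the system row-by-row, the first row reads
\[
   u + \div q = \int_0^{(\cdot)} \sigma(u(\tau))\,dW(\tau) \qquad \text{in } H_{\nu,0}(\R; L^2(\lambda_D)\otimes L^2(\P)),
\]
while the second row reads $\partial_0 q + \interior{\grad}\, u = 0$ in the distribution space $H_{\nu,-1}(\R;L^2(\lambda_D)^d\otimes L^2(\P))$ (as granted by Remark \ref{rem:integratedPDE}(c)). Since $u$ and the stochastic integral both lie in $H_{\nu,0}(\R;L^2(\lambda_D)\otimes L^2(\P))$, the first row immediately yields $\div q\in H_{\nu,0}$, so $q\in \dom(\div)$. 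Applying $\partial_{0}^{-1}$ to the second row gives $q = -\partial_{0}^{-1}\interior{\grad}\, u$, and Lemma \ref{l:dcp} identifies $\overline{\partial_{0}^{-1}\interior{\grad}}=\interior{\grad}\,\partial_{0}^{-1}$; since the left-hand side $-q$ is in $H_{\nu,0}$, this forces $u\in \dom(\interior{\grad}\,\partial_{0}^{-1})$ with $q = -\interior{\grad}\,\partial_{0}^{-1}u$.

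Next I take $\eta\in H_0^1(D)=\dom(\interior{\grad})$ and pair the first-row equation with $\eta$ pointwise in $L^2(\lambda_D)$. From Definition \ref{def:operators}, $\interior{\grad}=\overline{\grad_c}$ and $\div = -\grad_c^*$, so $\interior{\grad}^* = \grad_c^* = -\div$; in other words $\div$ and $-\interior{\grad}$ are adjoints. Hence
\[
 \langle u(\cdot),\eta\rangle_{L^2(\lambda_D)} + \langle \div q(\cdot),\eta\rangle_{L^2(\lambda_D)} = \langle u(\cdot),\eta\rangle_{L^2(\lambda_D)} - \langle q(\cdot),\interior{\grad}\eta\rangle_{L^2(\lambda_D)^d},
\]
and substituting $q = -\interior{\grad}\,\partial_{0}^{-1} u$ turns the second summand into $\langle \interior{\grad}\,\partial_{0}^{-1} u(\cdot),\interior{\grad}\eta\rangle_{L^2(\lambda_D)^d}$, which equals $\langle \grad\!\int_0^{(\cdot)}u(\tau)\,d\tau,\grad\eta\rangle_{L^2(\lambda_D)^d}$ once we rewrite $\partial_{0}^{-1}u$ as an ordinary time integral.

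That rewriting is legitimate because the right-hand side of the equation in Corollary \ref{cor:SHE} is supported on $[0,\infty)$ (as $W$ vanishes on $(-\infty,0)$), whence causality of the solution operator in Theorem \ref{thm:solth} forces $(u,q)$ to be supported on $[0,\infty)$ as well, and Remark \ref{r:pif} then yields $\partial_{0}^{-1}u(t) = \int_0^t u(\tau)\,d\tau$. Finally, Definition \ref{defn:stoch_int} expands the stochastic integral as $\sum_{k\in\N}\sqrt{\lambda_k}\int_0^{(\cdot)}\sigma(u(\tau))e_k\,dW_k(\tau)$, whose pairing with $\eta$ is exactly the series on the right of \eqref{eq:defvarsol2}. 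Assembling these three ingredients gives \eqref{eq:defvarsol2} as an identity in $H_{\nu,0}(\R;\C\otimes L^2(\P))$, which is the almost sure statement required.

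The main obstacle will be careful bookkeeping of the function spaces in the second paragraph: the second-component equation a priori lives only in $H_{\nu,-1}$, so invoking Lemma \ref{l:dcp} and the adjoint relation $\div^*=-\interior{\grad}$ has to be done at the right level in the Sobolev chains before concluding that $q\in H_{\nu,0}(\dom(\div))$ and $u\in H_{\nu,0}(\dom(\interior{\grad}\,\partial_0^{-1}))$, which is precisely what makes the pairing against $\eta$ well defined in the base Hilbert space.
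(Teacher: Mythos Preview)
Your overall strategy is sound and your final paragraph correctly identifies the weak point, but the approach differs from the paper's in a way that matters. The paper does \emph{not} invoke Remark~\ref{rem:integratedPDE}(c) to split the system row-by-row in distribution spaces. Instead it uses Remark~\ref{rem:integratedPDE}(b): it sets $(u_\eps,q_\eps)\coloneqq((1+\eps\partial_0)^{-1}u,(1+\eps\partial_0)^{-1}q)$, which lies in $\dom(\partial_0)\cap\dom(A)$, so that the two rows $u_\eps+\div q_\eps=(1+\eps\partial_0)^{-1}\!\int_0^{\cdot}\sigma(u)\,dW$ and $q_\eps+\partial_0^{-1}\interior{\grad}u_\eps=0$ hold as genuine equalities in $H_{\nu,0}(\R;L^2)$. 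Letting $\eps\to0$ and using the closedness of $\div$ gives $q\in\dom(\div)$ directly; for the second row one uses Lemma~\ref{l:dcp} applied to the sequence $u_\eps\to u$, $\partial_0^{-1}\interior{\grad}u_\eps=-q_\eps\to-q$, to conclude $u\in\dom(\overline{\partial_0^{-1}\interior{\grad}})=\dom(\interior{\grad}\partial_0^{-1})$ with $\interior{\grad}\partial_0^{-1}u=-q$. The rest of your argument (pairing with $\eta$, causality, Remark~\ref{r:pif}, Definition~\ref{defn:stoch_int}) then matches the paper.

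Your route via Remark~\ref{rem:integratedPDE}(c) can be made to work, but the sentence ``the first row reads $u+\div q=\int_0^{(\cdot)}\sigma(u)\,dW$ in $H_{\nu,0}(\R;L^2(\lambda_D)\otimes L^2(\P))$'' assumes what you are trying to prove: writing $\div q$ as a separate $L^2$-valued object presupposes $q\in\dom(\div)$. From Remark~\ref{rem:integratedPDE}(c) alone, $A(u,q)$ lives in $H_{\nu,0}(\R;H_{-1}(A+1))$, and extracting its first component requires you to argue that $H_{-1}(A+1)$ splits as $H^{-1}(D)\times(\dom\div)^*$ (it does, because the graph norm of $A$ decouples into the $H_0^1$-norm and the $\dom(\div)$-norm), that the extended $A$ acts componentwise as $(\div_{\mathrm{ext}},\interior{\grad}_{\mathrm{ext}})$, and finally that $\div_{\mathrm{ext}}q\in L^2$ forces $q\in\dom(\div)$ via the adjoint characterization $\div=-\interior{\grad}^*$. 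Similarly, your appeal to Lemma~\ref{l:dcp} on the second row is misplaced as written: $\interior{\grad}u$ is only in $(\dom\div)^*$, not in the $L^2$-setting where Lemma~\ref{l:dcp} operates, so you would instead commute $\partial_0^{-1}$ with the continuous extension $\interior{\grad}_{\mathrm{ext}}$ and then argue that $\interior{\grad}_{\mathrm{ext}}\partial_0^{-1}u=-q\in L^2$ forces $\partial_0^{-1}u\in\dom(\interior{\grad})$. The paper's $\eps$-regularization bypasses all of this bookkeeping, which is precisely the obstacle you flagged but left unresolved.
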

\begin{proof}
  By causality and the fact that $W=0$ for negative times, we infer $(u,q)$ is supported on $[0,\infty)$ only. Moreover, by Remark \ref{rem:integratedPDE} ((a) and (b)), we obtain that 
  \[(u_\eps,q_\eps)\coloneqq \big((1+\eps\partial_0)^{-1}u,(1+\eps\partial_0)^{-1}q\big)\in \dom(\partial_0)\cap \dom\Big(\begin{pmatrix} 0 & \div \\ \interior{\grad} & 0 \end{pmatrix}\Big).
  \]
  Furthermore, we have
\begin{equation}\label{eq:SHEreg}
 \left(\partial_0 \begin{pmatrix}
   0 & 0 \\ 0 & 1
 \end{pmatrix} + \begin{pmatrix}
   1 & 0 \\ 0 & 0
 \end{pmatrix}+\begin{pmatrix}
   0 & \div \\ \interior{\grad} & 0
 \end{pmatrix}\right)\begin{pmatrix}
   u_\eps \\ q_\eps
 \end{pmatrix}=\begin{pmatrix}
   (1+\eps\partial_0)^{-1}\int_0^{\cdot}\sigma(u)dW \\ 0
 \end{pmatrix}.
\end{equation}     
Hence, the first line of the latter equality yields
\[
   u_\eps + \div q_\eps = (1+\eps\partial_0)^{-1}\int_0^{\cdot}\sigma(u)dW.
\]
Thus, using $(1+\eps\partial_0)^{-1}\to 1$ as $\eps\to0$ in the strong operator topology, we obtain by the closedness of $\div$ that
\begin{equation}\label{eq:diveq}
   q\in \dom(\div) \text{ and } \div q = -u+\int_0^{\cdot}\sigma(u)dW.
\end{equation}
Next, the second line of \eqref{eq:SHEreg} reads
\[
   \partial_0 q_\eps +\interior{\grad} u_\eps = 0\text{ or }  q_\eps +\partial_0^{-1}\interior{\grad} u_\eps = 0.
\]
Thus, by Lemma \ref{l:dcp}, we obtain as $\eps\to 0$,
\[
   q = -\interior{\grad}\partial_0^{-1} u.
\]
Therefore, from \eqref{eq:diveq} we read off
\[
   u- \div \interior{\grad}\partial_0^{-1} u = \int_0^{\cdot}\sigma(u)dW.
\]
Thus, testing the latter equality with $\eta\in H_0^1(D)=\dom(\interior{\grad})$, and using that 
\[
  -\langle \div \interior{\grad}\partial_0^{-1} u, \eta \rangle_{L^2(\lambda_D )} =\langle \interior{\grad}\partial_0^{-1} u, \interior{\grad}\eta \rangle_{L^2(\lambda_D )}
\]
we infer the asserted equality.
\end{proof}

\begin{cor} In the situation of Theorem \ref{t:solvar}, we additionally assume that $u\in H_{\nu,0}(H_0^1(D)\otimes L^2(\P))$. Then $u$ is a solution in the sense of Definition \ref{def:variational}. 
\end{cor}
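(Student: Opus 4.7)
The plan is to verify the four defining properties of Definition~\ref{def:variational} in order, with the variational identity \eqref{eq:defvarsol} arising from \eqref{eq:defvarsol2} after commuting the gradient past the time integral and choosing suitable continuous modifications.

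First I would address the structural prerequisites. Predictability of $u$ with values in $L^2(\lambda_D)$, and thus in $H_0^1(D)$ under the added regularity, follows from Corollary~\ref{cor:SHE} since the solution was constructed in $H_{\nu,0}(\mathbb{R}; P_W \otimes 1)$. Support on $[0,\infty)$ is a consequence of causality of the solution operator together with the fact that $W$ (and hence the forcing $\int_0^{(\cdot)} \sigma(u)\,dW$) vanishes on $(-\infty,0]$. The hypothesis $u \in H_{\nu,0}(\mathbb{R}; H_0^1(D)\otimes L^2(\P))$ gives
\[
   \E\int_\R \|u(t)\|_{H^1(D)}^2 e^{-2\nu t}\,dt < \infty,
\]
so by Fubini the path integral is finite for a.e.\ $\omega$, and the elementary estimate
\[
   \int_0^T \|u(t)\|^2_{H^1(D)}\,dt \leq e^{2\nu T}\int_0^\infty \|u(t)\|_{H^1(D)}^2 e^{-2\nu t}\,dt
\]
yields both local integrability on $[0,T]$ and the required exponential growth of rate $\nu$ almost surely.

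Next I would exploit Theorem~\ref{t:solvar}, which already supplies \eqref{eq:defvarsol2}. Under the extra regularity, Lemma~\ref{l:dcp} applied with $C = \grad$ on $H^1_0$-valued functions gives $\grad\partial_0^{-1}u = \partial_0^{-1}\grad u$; using Remark~\ref{r:pif} and the fact that $u$ is supported on $[0,\infty)$, this unravels to
\[
   \grad \int_0^t u(\tau)\,d\tau = \int_0^t \grad u(\tau)\,d\tau,
\]
valid in $H_{\nu,0}(\mathbb{R}; L^2(\lambda_D)^d \otimes L^2(\P))$. Substituting into \eqref{eq:defvarsol2} yields the asserted identity
\[
   \langle u(t),\eta\rangle_{L^2(\lambda_D)} + \int_0^t \langle \grad u(\tau), \grad \eta\rangle_{L^2(\lambda_D)^d}\,d\tau = \sum_{k\in\N} \sqrt{\lambda_k}\int_0^t \langle \sigma(u(\tau))e_k,\eta\rangle_{L^2(\lambda_D)}\,dW_k(\tau)
\]
as an equality in $H_{\nu,0}$, i.e.\ for almost every $t$, almost surely.

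The main technical step I anticipate is upgrading this ``for a.e.\ $t$, a.s.'' statement to ``for all $t \geq 0$, a.s.'' for each fixed $\eta$, as required by Definition~\ref{def:variational}. For this, fix $\eta\in H_0^1(D)$ and observe that the right-hand side is a continuous real-valued local martingale (standard property of the It\^o integral), while $t\mapsto \int_0^t \langle \grad u(\tau),\grad\eta\rangle\,d\tau$ is absolutely continuous by the local integrability established above. Hence the identity forces $t\mapsto \langle u(t),\eta\rangle_{L^2(\lambda_D)}$ to admit a continuous modification, and by selecting this modification the identity holds for \emph{all} $t\geq 0$ almost surely. The only genuine obstacle is making sure the commutation of $\grad$ and $\int_0^{(\cdot)}$ is justified at the level of $H_{\nu,0}$-valued processes — but this is precisely the content of Lemma~\ref{l:dcp}, so no additional work is needed.
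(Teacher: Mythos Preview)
The paper states this corollary without proof, evidently regarding it as an immediate consequence of Theorem~\ref{t:solvar} once the additional spatial regularity $u\in H_{\nu,0}(H_0^1(D)\otimes L^2(\P))$ is available. Your proposal correctly supplies the details the paper leaves implicit: the decisive step is precisely the one you isolate, namely that the extra regularity places $u$ in $\dom(\interior{\grad})$ so that Lemma~\ref{l:dcp} yields $\interior{\grad}\,\partial_0^{-1}u=\partial_0^{-1}\interior{\grad}u$, turning \eqref{eq:defvarsol2} into \eqref{eq:defvarsol}. Your verification of the side conditions (support via causality, exponential growth from the weighted $L^2$-bound, the continuous-modification argument to pass from ``a.e.\ $t$'' to ``all $t$'') is more thorough than anything the paper writes down, and is sound.

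One small point worth flagging: you assert predictability ``with values in $H_0^1(D)$'' by combining $L^2$-predictability from Corollary~\ref{cor:SHE} with the assumed $H_0^1$-regularity. In the paper's framework predictability means membership in the closure $H_{\nu,0}(\R;P_W\otimes 1)$ of simple predictable processes, and the closure is taken in the respective norm; so $L^2(\lambda_D)$-predictability together with $H_0^1$-integrability does not \emph{formally} yield $H_0^1$-predictability without a short density/approximation argument. This is a routine measurability issue and not a genuine obstruction, but since you explicitly list predictability among the items to check, you may want to add a sentence justifying it (e.g.\ via the continuous embedding $H_0^1(D)\hookrightarrow L^2(\lambda_D)$ and the fact that the predictable $\sigma$-algebra does not depend on the target Hilbert space).
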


\subsection{Stochastic wave equation}\label{sec:SWE}
Similarly to the treatment of the stochastic heat equation in the previous section, we show now how to reformulate the stochastic wave equation into a first order system and then prove the existence and uniqueness of solution. This equation has been treated in \cite{walsh, dalang} with a random-field approach and for instance in \cite[Example 5.8, Section 13.21]{dapratozabczyk} with a semi-group approach. Consider the following equation
\begin{align}\label{eq:stoch_wave}
   \partial_0^2u - \Delta u & = \sigma(u)\dot W, \notag\\
   u(0)=0, \partial_0u(0)=0, u|_{\partial D} & =0.
\end{align}
As in the previous section, we first apply the operator $\partial_0^{-1}$ to \eqref{eq:stoch_wave}, write $\Delta = \div\interior{\grad}$, and finally define $v\coloneqq \interior{\grad} \partial_0^{-1} u$. With these manipulations, we arrive at the following first-order system
\begin{equation}\label{eq:wave1}
   \left(\partial_0 \begin{pmatrix}
   1 & 0 \\ 0 & 1
 \end{pmatrix} -\begin{pmatrix}
   0 & \div \\ \interior{\grad} & 0
 \end{pmatrix}\right)\begin{pmatrix}
   u \\ v
 \end{pmatrix}=\begin{pmatrix}
   \int_0^{\cdot}\sigma(u)dW \\ 0
 \end{pmatrix},
\end{equation}
which we think of as the appropriate formulation for the stochastic wave equation. Now we can show, with $\sigma\colon L^2(\lambda_D )\to L_2(G,L^2(\lambda_D ))$ Lipschitz continuous, the existence and uniqueness of a solution to \eqref{eq:wave1} with the help of Theorem \ref{thm:spdes}. The only thing to be verified is that 
 \[
   M(z)\coloneqq \begin{pmatrix}
   1 & 0 \\ 0 & 1
 \end{pmatrix}
 \]
satisfies condition \eqref{eq:condition_M} for all $z\in B(r,r)$, for some $r>0$. This, however, is easy (see also the computation in \eqref{eq:M(z)}). Thus, we just obtained the following:
\begin{cor} There is $\nu_0>0$ such that for all $\nu\geq\nu_0$, there is a unique $(u,v)\in H_{\nu,0}(\mathbb{R};P_W\otimes 1_{L^2(\lambda_D )^{d+1}})$ such that
\[
  \overline{\left(\partial_0 \begin{pmatrix}
   1 & 0 \\ 0 & 1
 \end{pmatrix} -\begin{pmatrix}
   0 & \div \\ \interior{\grad} & 0
 \end{pmatrix}\right)}\begin{pmatrix}
   u \\ v
 \end{pmatrix}=\begin{pmatrix}
   \int_0^{\cdot}\sigma(u)dW \\ 0
 \end{pmatrix}.
\]The solution is independent of $\nu$.
\end{cor}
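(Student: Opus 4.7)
The plan is to verify the three hypotheses of Theorem \ref{thm:spdes} for the setup $H=L^2(\lambda_D)^{d+1}$, the operator $A=-\begin{pmatrix} 0 & \div \\ \interior{\grad} & 0\end{pmatrix}$, and the constant-valued material law $M(z)=\begin{pmatrix} 1 & 0 \\ 0 & 1\end{pmatrix}$. Since both the well-posedness statement and the $\nu$-independence then follow immediately, the argument is essentially a checklist; nothing of substance new has to be proved here.

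First I would check that $A$ is skew-self-adjoint. By Definition \ref{def:operators} we have $\interior{\grad}=\overline{\grad_c}$ and $\div=-\grad_c^*$, and hence $\interior{\grad}^*=-\div$ and $\div^*=-\interior{\grad}$. A block-matrix computation then gives
\[
A^* = -\begin{pmatrix} 0 & \interior{\grad}^* \\ \div^* & 0\end{pmatrix} = -\begin{pmatrix} 0 & -\div \\ -\interior{\grad} & 0\end{pmatrix} = -A,
\]
which is the desired skew-self-adjointness (after passing to the canonical extension on $L^2(\P)\otimes H$, cf.\ Remark \ref{rem:canext}).

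Next I would verify the positive definiteness condition \eqref{eq:condition_M}. Since $M(z)=1_H$, the map $M$ is trivially analytic and bounded on $B(r,r)$ for every $r>0$. Moreover, as $B(r,r)$ is mapped under $z\mapsto z^{-1}$ onto the half-plane $\{\ii t+\mu : t\in\R,\, \mu>1/(2r)\}$, one has for $\phi\in H$ and $z\in B(r,r)$
\[
\Re\langle z^{-1}M(z)\phi,\phi\rangle_H = \Re(z^{-1})\|\phi\|_H^2 \geq \frac{1}{2r}\|\phi\|_H^2,
\]
so \eqref{eq:condition_M} holds with $c=1/(2r)$.

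Finally, the noise coefficient acting on the product space has to be packaged into the shape demanded by Theorem \ref{thm:spdes}. Defining $\tilde\sigma\colon H\to L_2(G,H)$ by $\tilde\sigma(u,v)\coloneqq \begin{pmatrix}\sigma(u)\\ 0\end{pmatrix}$, one obtains a Lipschitz continuous map with the same Lipschitz constant as $\sigma$. With these ingredients in place, Theorem \ref{thm:spdes} yields some $\nu_0>0$ such that for every $\nu\geq\nu_0$ there is a unique $(u,v)\in H_{\nu,0}(\R;P_W\otimes 1_{L^2(\lambda_D)^{d+1}})$ solving the asserted equation, and the independence of $\nu$ is the independence statement in that same theorem (which in turn rests on Lemma \ref{rem:indep_of_evo} and the fact that cutting off in time preserves the class of predictable processes). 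No step is genuinely hard; the only place where one has to be a little careful is the identification of the adjoints of $\interior{\grad}$ and $\div$ when verifying skew-self-adjointness of $A$.
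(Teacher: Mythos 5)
Your proposal is correct and follows exactly the route the paper takes: verify the hypotheses of Theorem \ref{thm:spdes} with $H=L^2(\lambda_D)^{d+1}$, the skew-self-adjoint block operator built from $\div$ and $\interior{\grad}$, and the constant material law $M(z)=1_H$, whose positive definiteness is the computation already recorded in \eqref{eq:M(z)}. You are in fact slightly more explicit than the paper, which delegates the skew-self-adjointness check to the heat-equation section and leaves the repackaging of $\sigma$ as a map on the product space implicit.
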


We emphasize that the way of writing the stochastic wave equation into a first-order-in-time system is not unique. Indeed, a more familiar way is to set $w\coloneqq -\Delta\partial_0^{-1}u$. With this we arrive at the following system
\begin{equation}\label{eq:wave2}
   \left(\partial_0 \begin{pmatrix}
  1 & 0 \\ 0 & 1
\end{pmatrix} +\begin{pmatrix}
  0 & 1\\ \Delta & 0
\end{pmatrix}\right)\begin{pmatrix}
  u \\ w
\end{pmatrix}=\begin{pmatrix}
  \int_0^{\cdot}\sigma(u)dW \\ 0
\end{pmatrix}.     
\end{equation}
The latter system is essentially the same as the system in \eqref{eq:wave1}, see \cite[p.~16/17]{Picard2013} for the mathematically rigorous statement. However, the spatial Hilbert spaces differ from one another: in \eqref{eq:wave1} the spatial Hilbert space is $H=L^2(\lambda_D )\oplus L^2(\lambda_D )^d$, and in \eqref{eq:wave2} it coincides with $H=H_0^1(D)\oplus L^2(\lambda_D )$. The domains of the two spatial partial differential operators
\[ \begin{pmatrix}
   0 & \div \\ \interior{\grad} & 0
 \end{pmatrix}\quad\text{and}\quad\begin{pmatrix} 
   0 & 1\\ \Delta & 0
 \end{pmatrix}\]
are $\dom(\interior{\grad})\oplus \dom(\div)= H_0^1(D)\oplus \dom(\div)$ and $\dom(\Delta)\oplus H_0^1(D)$, respectively, where $\dom(\Delta)=\dom(\div\interior{\grad})$. However, the solvability of one system implies the solvability of the other one. In any case, for bounded $D$, endowing $H_0^1(D)$ with the scalar product induced by $(u,v)\mapsto \langle \grad u,\grad v\rangle$, it can be shown that 
\[
   \quad\begin{pmatrix} 
   0 & 1\\ \Delta & 0
 \end{pmatrix} \colon \dom(\div\interior{\grad})\oplus H_0^1(D)\subseteq H_0^1(D)\oplus L^2(\lambda_D )\to H_0^1(D)\oplus L^2(\lambda_D ), (u,v)\mapsto (v,\Delta u)
\]
is skew-self-adjoint. For the latter assertion, it is sufficient to note the following proposition:
\begin{lem}\label{l:ske} Assume that $D\subseteq \mathbb{R}^d$ is bounded. Let $C\colon \dom(\div\interior{\grad})\subseteq H_0^1(D) \to L^2(\lambda_D )$ with $Cu=\Delta u$. Then, for $C^*\colon \dom(C^*)\subseteq L^2(\lambda_D )\to H_0^1(D)$ we have
\[
  C^*v= - v\text{ for all } v\in \dom(C^*)=H_0^1(D),
\]
where $H_0^1(D)$ is endowed with the scalar product $(u,v)\mapsto \langle \grad u,\grad v\rangle$. 
\end{lem}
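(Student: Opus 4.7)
The plan is to verify both inclusions in $\dom(C^*) = H_0^1(D)$ and to read off the formula $C^*v = -v$ via integration by parts. The one non-trivial ingredient is the surjectivity of the Dirichlet Laplacian, which I expect to be the main (though still mild) obstacle and which is available precisely because $D$ is bounded.

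For the easy inclusion $H_0^1(D) \subseteq \dom(C^*)$ together with the formula $C^*v = -v$, I pick any $v \in H_0^1(D) = \dom(\interior{\grad})$ and $u \in \dom(\div\interior{\grad})$ and invoke $\div = -\interior{\grad}^*$ to obtain
\begin{align*}
   \langle Cu,v\rangle_{L^2(\lambda_D)}
   &= \langle \div\interior{\grad}u,v\rangle_{L^2(\lambda_D)} = -\langle \interior{\grad}u,\interior{\grad}v\rangle_{L^2(\lambda_D)^d} \\
   &= -\langle \grad u,\grad v\rangle_{L^2(\lambda_D)^d} = \langle u,-v\rangle_{H_0^1(D)},
\end{align*}
using $\interior{\grad} v = \grad v$ for $v \in H_0^1(D)$. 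This immediately yields $v \in \dom(C^*)$ and $C^*v = -v$.

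For the reverse inclusion $\dom(C^*) \subseteq H_0^1(D)$, let $v \in \dom(C^*)$ and set $w := C^*v \in H_0^1(D)$. Applying the same computation with $w$ in place of $v$, one finds $\langle u, w\rangle_{H_0^1(D)} = -\langle \Delta u, w\rangle_{L^2(\lambda_D)}$ for every $u \in \dom(\div\interior{\grad})$. Combining this with the defining identity $\langle \Delta u, v\rangle_{L^2(\lambda_D)} = \langle u, w\rangle_{H_0^1(D)}$ of the adjoint yields
\[
   \langle \Delta u, v + w\rangle_{L^2(\lambda_D)} = 0 \quad \text{for all } u \in \dom(\div\interior{\grad}).
\]

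The remaining step is to deduce $v + w = 0$ from this orthogonality, for which I use that $\Delta_D := \div\interior{\grad}$ has range equal to $L^2(\lambda_D)$. Boundedness of $D$ together with Poincaré's inequality gives $\langle -\Delta_D u, u\rangle_{L^2(\lambda_D)} = \|\interior{\grad} u\|_{L^2(\lambda_D)^d}^2 \geq c\|u\|_{L^2(\lambda_D)}^2$ for some $c>0$, so Lemma \ref{lem:pos_def} applied to the self-adjoint operator $-\Delta_D$ shows $\Delta_D$ to be continuously invertible, in particular surjective. Hence $v + w = 0$ in $L^2(\lambda_D)$, i.e.\ $v = -w \in H_0^1(D)$ and $C^*v = w = -v$. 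Everything outside this surjectivity argument is routine bookkeeping of adjoints, so no further obstacle is expected.
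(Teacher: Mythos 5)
Your proof is correct and follows essentially the same route as the paper: reduce the adjoint relation via $\div=-\interior{\grad}^*$ to the orthogonality statement $\langle \div\interior{\grad}\,\phi, v+f\rangle_{L^2(\lambda_D)}=0$ for all $\phi\in\dom(\div\interior{\grad})$, and conclude from surjectivity of the Dirichlet Laplacian. The only difference is presentational (two inclusions instead of a chain of equivalences), and you additionally spell out the justification of the surjectivity via Poincar\'e's inequality and Lemma \ref{lem:pos_def}, which the paper merely asserts.
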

\begin{proof} Let $v\in L^2(\lambda_D )$ and $f\in H_0^1(D)$. Then we compute
\begin{align*}
  v\in \dom(C^*), C^*v = f & \iff \forall \phi\in \dom(C)\colon \langle C\phi , v \rangle_{L^2(\lambda_D )}=\langle \phi,f\rangle_{H_0^1(D)} 
  \\ & \iff \forall \phi\in \dom(\div\interior{\grad})\colon \langle \div\interior{\grad}\phi , v \rangle_{L^2(\lambda_D )}=\langle \interior{\grad}\phi,\interior{\grad}f\rangle_{L^2(\lambda_D )}
  \\ & \iff \forall \phi\in \dom(\div\interior{\grad})\colon \langle \div\interior{\grad}\phi , v \rangle_{L^2(\lambda_D )}=-\langle\div \interior{\grad}\phi,f\rangle_{L^2(\lambda_D )}
  \\ & \iff \forall \phi\in \dom(\div\interior{\grad})\colon \langle \div\interior{\grad}\phi , v+f \rangle_{L^2(\lambda_D )}=0.
\end{align*}
But, $\div\interior{\grad} \colon \dom(\div\interior{\grad})\subseteq L^2(\lambda_D )\to L^2(\lambda_D )$ is continuously invertible. In particular, $\div\interior{\grad}$ is onto. Hence,
\[
   \forall \phi\in \dom(\div\interior{\grad})\colon \langle \div\interior{\grad}\phi , v+f \rangle_{L^2(\lambda_D )}=0 \iff v=-f\in H_0^1(D).
\]
The assertion follows. 
\end{proof}

Hence, with Lemma \ref{l:ske} in mind, in either formulation -- \eqref{eq:wave1} or \eqref{eq:wave2} -- our solution theory, Theorem \ref{thm:spdes}, applies. We shall also note that the functional analytic framework provided serves to treat deterministic variable coefficients $a\colon D\to \mathbb{C}^{d\times d}$ satisfying the same assumptions as in Remark \ref{r:vc} and to treat the corresponding wave equation
\[
   (\partial_0^2-\div a \interior{\grad}) u = \sigma(u)\dot W
\]
or, rather,
\[
  \overline{\left(\partial_0 \begin{pmatrix}
   1 & 0 \\ 0 & a^{-1}
 \end{pmatrix} -\begin{pmatrix}
   0 & \div \\ \interior{\grad} & 0
 \end{pmatrix}\right)}\begin{pmatrix}
   u \\ v
 \end{pmatrix}=\begin{pmatrix}
   \int_0^{\cdot}\sigma(u)dW \\ 0
 \end{pmatrix}.
\]

\subsubsection*{Connection to mild solutions}\label{su:cms}

Next, we will comment on the relationship of the solution obtained for \eqref{eq:wave2} to a more classical way of deriving the solution by means of $C_0$-semi-groups: On the bounded, open $D\subseteq \mathbb{R}^d$ consider the classical reformulation of the stochastic wave equation as a first-order system
\begin{equation}\label{eq:systemwaveDPZ}
  \left(\begin{pmatrix} \partial_0 & 0 \\ 0 & \partial_0 
\end{pmatrix} - \begin{pmatrix} 0 & 1 \\ \Delta & 0 
\end{pmatrix}\right)\begin{pmatrix} u \\ v \end{pmatrix} = 
\begin{pmatrix} 0 \\ \sigma(u)\dot W \end{pmatrix},
\end{equation}
with zero initial conditions and homogeneous Dirichlet boundary conditions, see \cite[Example 5.8]{dapratozabczyk} for this reformulation. So $\Delta=\div\interior{\grad}$ with a suitable domain. The solution to \eqref{eq:systemwaveDPZ} can be computed using the semi-group approach in \cite{dapratozabczyk} to be
\begin{align}\label{eq:SGu}
  \begin{pmatrix} u(t) \\ v(t) \end{pmatrix}
  & = \int_0^t \bfS(t-s)\begin{pmatrix} 0 \\ 
\sigma(u(s))dW(s)\end{pmatrix} \notag\\
  & = \begin{pmatrix} \int_0^t 
(-\Delta)^{-1/2}\sin((-\Delta)^{1/2}(t-s)\sigma(u(s))dW(s) \\ \int_0^t 
\cos((-\Delta)^{1/2}(t-s))\sigma(u(s))dW(s) \end{pmatrix},
\end{align}
where $\bfS(t)$ is the semi-group defined by
\[ \bfS(t) = \begin{pmatrix} \cos((-\Delta)^{1/2}t) & 
(-\Delta)^{-1/2}\sin((-\Delta)^{1/2}t) \\ 
-(-\Delta)^{1/2}\sin((-\Delta)^{1/2}t) & \cos((-\Delta)^{1/2}t) 
\end{pmatrix}. \]

However, in \eqref{eq:wave2}, we have arrived at a different reformulation as a first-order system, given by
\begin{equation}\label{eq:systemwave}
  \overline{\left(\begin{pmatrix} 
		\partial_0 & 0 \\ 0 & \partial_0 
	\end{pmatrix} 
	+ 
	\begin{pmatrix} 
		0 & 1	 \\ \Delta & 0 
	\end{pmatrix}\right)}
	\begin{pmatrix} 
		u \\ w 
	\end{pmatrix}
	= 
	\begin{pmatrix} 
		\int_0^\cdot \sigma(u(r))dW(r) \\ 0 
	\end{pmatrix}.
\end{equation}
Our aim in this section will be to establish the following result.

\begin{thm}\label{t:sgev} Let $(u,v)$ satisfy \eqref{eq:SGu}. Then $(u,w)$ solves \eqref{eq:systemwave} with 
\begin{equation}\label{eq:w}
  w = \int_0^\cdot \sigma(u(r))dW(r) - v.
\end{equation}
\end{thm}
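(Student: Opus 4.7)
The plan is to verify the two component equations of \eqref{eq:systemwave} directly, by computing the distributional time-derivatives of $u$ and $v$ from \eqref{eq:SGu} and then using the algebraic definition $w=\int_0^\cdot\sigma(u(r))dW(r)-v$ to conclude. Writing $S_1(t)\coloneqq (-\Delta)^{-1/2}\sin((-\Delta)^{1/2}t)$ and $S_0(t)\coloneqq \cos((-\Delta)^{1/2}t)$ (extended by $0$ for $t<0$), observe the key kernel identities $S_1(0)=0$, $S_1'(t)=S_0(t)$, and $\partial_t S_0(t)=\Delta\, S_1(t)$ on $(0,\infty)$. With this notation $u(t)=\int_0^t S_1(t-s)\sigma(u(s))dW(s)$ and $v(t)=\int_0^t S_0(t-s)\sigma(u(s))dW(s)$, and both processes belong to $H_{\nu,0}(\mathbb{R};L^2(\lambda_D)\otimes L^2(\P))$ thanks to the boundedness of $S_0,S_1$ on $L^2(\lambda_D)$.

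The first claim is that $\partial_{0,\nu}u=v$ in the distributional sense $H_{\nu,-1}(\mathbb{R};L^2(\lambda_D)\otimes L^2(\P))$. I would test against $\phi\in C_c^\infty(\mathbb{R})\otimes \dom(\Delta)$, apply the stochastic Fubini theorem to interchange the deterministic time integration against $\phi'(t)$ with the stochastic integration over $dW(s)$, and then integrate by parts in $t$ on the interior integral; the boundary term vanishes because $S_1(0)=0$ and $\phi$ has compact support, and the remaining term reproduces $\langle v,\phi\rangle$. The same scheme applied to $v$ gives $\partial_{0,\nu}v=\Delta u+\partial_{0,\nu}\!\int_0^{\cdot}\sigma(u(r))dW(r)$, where now $S_0(0)=1$ produces an extra boundary term which -- after another application of stochastic Fubini -- is exactly the distributional derivative of the stochastic convolution $\int_0^\cdot\sigma(u(r))dW(r)$ (interpreted as an $H_{\nu,-1}$-element), while the interior contribution collapses to $\Delta u$ via $\partial_t S_0=\Delta S_1$.

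With these two identities in hand, the rest is algebra. The first row of \eqref{eq:systemwave} follows from $\partial_{0,\nu}u+w=v+\int_0^\cdot\sigma(u(r))dW(r)-v=\int_0^\cdot\sigma(u(r))dW(r)$, while the second row follows from
\[
 \partial_{0,\nu}w+\Delta u=\partial_{0,\nu}\!\!\int_0^\cdot\sigma(u(r))dW(r)-\partial_{0,\nu}v+\Delta u=\partial_{0,\nu}\!\!\int_0^\cdot\sigma(u(r))dW(r)-\partial_{0,\nu}\!\!\int_0^\cdot\sigma(u(r))dW(r)-\Delta u+\Delta u=0,
\]
holding in $H_{\nu,-1}(\mathbb{R};L^2(\lambda_D)\otimes L^2(\P))$; by Remark \ref{rem:integratedPDE}(c) the closure bar in \eqref{eq:systemwave} can be read off in the same intersection space, so the equality of $(u,w)$ with the solution from \eqref{eq:systemwave} is established, and uniqueness (from the solution theory already obtained) identifies the two.

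The main obstacle I anticipate is making the two stochastic-Fubini/integration-by-parts steps rigorous: one has to justify the interchange for operator-valued integrands $S_0(t-s),S_1(t-s)$ and verify that the resulting boundary terms from $\phi\in C_c^\infty(\mathbb{R})$ do live in the expected distributional space. A clean way around this is to approximate $u,v$ by their mollifications $(1+\eps\partial_{0,\nu})^{-1}u,(1+\eps\partial_{0,\nu})^{-1}v$, which by Remark \ref{rem:integratedPDE}(b) lie in $\dom(\partial_{0,\nu})\cap\dom(\Delta)$, perform the classical (pathwise) computation there, and pass to the limit $\eps\to 0$ using continuity of $\partial_{0,\nu}$ and $\Delta$ between the appropriate Sobolev-chain spaces.
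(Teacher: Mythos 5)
Your verification strategy is sound and the algebra is right, but the key computational step runs in the opposite direction from the paper's. You differentiate: you establish $\partial_{0}u=v$ and $\partial_{0}v=\Delta u+\partial_{0}\int_0^{\cdot}\sigma(u(r))dW(r)$ distributionally, which forces you to give meaning to the time-derivative of the stochastic convolution as an element of $H_{\nu,-1}$ and then cancel it in the second row. The paper integrates instead: using the explicit spectral identities of Lemma \ref{l:spf} (namely $\int_r^t\cos((-\Delta)^{1/2}(s-r))\,ds=(-\Delta)^{-1/2}\sin((-\Delta)^{1/2}(t-r))$ and its companion) together with a stochastic Fubini interchange, it derives the \emph{integrated} identities $\partial_{0}^{-1}v=u$ and $w=-\Delta\partial_{0}^{-1}u$ by testing $w(t)$ against $\phi\in\dom(\Delta)$ and moving $-\Delta$ onto $\phi$. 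This keeps every object in $H_{\nu,0}$, never requires differentiating the stochastic integral, and delivers the spatial regularity statement $\partial_{0}^{-1}u\in\dom(\Delta)$ as a by-product. Your route buys a more classical ``semigroup differentiation'' flavour, but at the price of the two integration-by-parts/stochastic-Fubini lemmas you correctly identify as the crux; the paper's route needs only one Fubini interchange and the elementary scalar identities behind Lemma \ref{l:spf}.

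One caveat on your proposed ``clean way around'': invoking Remark \ref{rem:integratedPDE}(b) to place $(1+\eps\partial_{0})^{-1}u$ in $\dom(\partial_{0})\cap\dom(\Delta)$ is circular at that stage, since that remark presupposes membership of the pair in the domain of the closed operator --- which is exactly what the theorem is asserting --- and time-mollification does not create spatial regularity on its own. In the paper the mollification step comes \emph{after} the identities $w=-\Delta\partial_{0}^{-1}u$ and $\partial_{0}^{-1}v=u$ are in hand, from which $\dom(\Delta)$-membership of the mollified quantities follows by commuting $(1+\eps\partial_{0})^{-1}$ with $\Delta$. Your main line of argument (testing against $C_c^\infty(\R)\otimes\dom(\Delta)$) supplies the needed weak identities, so the proof closes; just reorder so that the mollification is the last step rather than a substitute for the Fubini argument.
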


For this, we need some elementary prerequisites:
\begin{lem}\label{l:spf} Let $r,t\in\mathbb{R}$. Then the following statements hold.

(a) For any $\zeta\in \mathbb{R}_{>0}$, we have
\[
   \int_r^t\zeta^{1/2}\sin(\zeta^{1/2}(s-r))ds=1-\cos(\zeta^{1/2}(t-r))
\]
 and 
 \[
    \int_r^t \cos(\zeta^{1/2}(s-r))ds=\zeta^{-1/2}\sin(\zeta^{1/2}(t-r)).
 \]
 
(b) For all $\phi\in \dom(\Delta)=\dom(\div\interior{\grad})$ we have
\[
  \int_r^t(-\Delta)(-\Delta)^{-1/2}\sin((-\Delta)^{1/2}(s-r))\phi ds=(I-\cos((-\Delta)^{1/2}(t-r)))\phi 
\]
and 
\[
  \int_r^t\cos((-\Delta)^{1/2}(s-r))\phi ds=(-\Delta)^{-1/2}\sin((-\Delta)^{1/2}(t-r))\phi.
\]
\end{lem}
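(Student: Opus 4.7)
The plan is to treat (a) by direct integration in one real variable, and then to lift (a) to (b) via the spectral theorem for the non-negative self-adjoint operator $-\Delta = -\div\interior{\grad}$ with homogeneous Dirichlet boundary conditions.

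For part (a), I would simply compute antiderivatives. The function $s\mapsto -\cos(\zeta^{1/2}(s-r))$ has derivative $s\mapsto \zeta^{1/2}\sin(\zeta^{1/2}(s-r))$, so by the fundamental theorem of calculus
\[
   \int_r^t \zeta^{1/2}\sin(\zeta^{1/2}(s-r))\,ds = -\cos(\zeta^{1/2}(t-r))+\cos(0) = 1-\cos(\zeta^{1/2}(t-r)).
\]
The second identity is analogous, using the antiderivative $s\mapsto \zeta^{-1/2}\sin(\zeta^{1/2}(s-r))$.

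For part (b), recall that on the bounded open set $D\subseteq\mathbb{R}^d$, the operator $-\Delta=-\div\interior{\grad}$ is self-adjoint and strictly positive on $L^2(\lambda_D)$. By the spectral theorem there is a projection-valued measure $E$ on $(0,\infty)$ such that $-\Delta = \int_0^\infty \zeta\, dE(\zeta)$, and the bounded Borel functional calculus gives $\cos((-\Delta)^{1/2}(s-r)) = \int_0^\infty \cos(\zeta^{1/2}(s-r))\,dE(\zeta)$ and similarly for the other functions appearing. For $\phi\in\dom(\Delta)$, the scalar measure $\mu_\phi(A):=\langle E(A)\phi,\phi\rangle$ satisfies $\int_0^\infty (1+\zeta)^2\,d\mu_\phi(\zeta)<\infty$, so all the integrands in sight are dominated uniformly in $s\in[r,t]$ (in the strong operator topology on $\phi$) by integrable functions. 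This justifies interchanging the Bochner integral in $s$ with the spectral integral (a Fubini argument for vector-valued integrals against a finite measure). Applying part (a) pointwise in $\zeta$ inside the spectral integral then yields
\[
  \int_r^t (-\Delta)(-\Delta)^{-1/2}\sin((-\Delta)^{1/2}(s-r))\phi\,ds = \int_0^\infty\!\!\big(1-\cos(\zeta^{1/2}(t-r))\big)\,dE(\zeta)\phi,
\]
and analogously for the second identity, which are the asserted formulas.

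The only non-routine point is the Fubini-style interchange of the Bochner integral over $[r,t]$ with the spectral resolution. I expect this to be the main (minor) obstacle: one needs a uniform domination on $[r,t]$ to pass integration through the spectral integral. The hypothesis $\phi\in\dom(\Delta)$ is precisely what supplies this; on the factor $(-\Delta)(-\Delta)^{-1/2}\sin((-\Delta)^{1/2}(s-r))\phi$ the relevant symbol $\zeta^{1/2}|\sin(\zeta^{1/2}(s-r))|\le \zeta(t-r)$ is $\mu_\phi$-integrable because $\phi\in\dom(\Delta)$ entails $\int_0^\infty \zeta^2\, d\mu_\phi(\zeta)<\infty$, and on the second integrand the symbol $|\cos(\zeta^{1/2}(s-r))|\le 1$ is $\mu_\phi$-integrable because $\mu_\phi$ is finite. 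Once this is in place the proof is a direct reduction to part (a).
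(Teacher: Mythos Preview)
Your proof is correct and follows exactly the approach of the paper: part (a) by direct antidifferentiation, and part (b) by invoking the spectral theorem for the strictly positive self-adjoint Dirichlet--Laplacian on the bounded set $D$ together with a Fubini-type interchange to reduce to (a). The paper states this in a single sentence without spelling out the domination argument, so your added justification of the Fubini step is a welcome elaboration rather than a departure.
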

\begin{proof} The equations in (a) can be verified immediately. In order to settle (b), we use the spectral theorem for the (strictly positive definite) Dirichlet--Laplace operator $-\Delta$ on the underlying open and bounded set $D$. Hence, (b) is a consequence of (a) by Fubini's theorem.
\end{proof}

Next, we proceed to a proof of the main result in this section.

\begin{proof}[Proof of Theorem \ref{t:sgev}] Using \eqref{eq:w} together with the second line of \eqref{eq:SGu}, we obtain for  $\phi\in \dom(\Delta)$
\begin{align*}
     &\langle w(t),\phi\rangle_{L^2(\lambda_D )} \\
     & =   \Big\langle \int_0^t  \bigg(I-\cos((-\Delta)^{1/2}(t-r))\bigg)\sigma(u(r)) dW(r), \phi\Big\rangle_{L^2(\lambda_D )}\\
     & =   \sum_{k\in \mathbb{N}} \lambda_k^{1/2}\int_0^t \langle \bigg(I-\cos((-\Delta)^{1/2}(t-r))\bigg)\sigma(u(r))e_k, \phi\rangle_{L^2(\lambda_D )} dW_k(r) \\
     & =   \sum_{k\in \mathbb{N}} \lambda_k^{1/2}\int_0^t \langle \sigma(u(r))e_k,\bigg(I-\cos((-\Delta)^{1/2}(t-r))\bigg) \phi\rangle_{L^2(\lambda_D )} dW_k(r).
\end{align*}     
     With Lemma \ref{l:spf}, we further obtain
     \begin{align*}
     &\langle w(t),\phi\rangle_{L^2(\lambda_D )} \\     
     & =   \sum_{k\in \mathbb{N}} \lambda_k^{1/2}\int_0^t \langle \sigma(u(r))e_k,\bigg(\int_r^t (-\Delta)(-\Delta)^{-1/2}\sin((-\Delta)^{1/2}(s-r))\phi ds\bigg) \rangle_{L^2(\lambda_D )} dW_k(r) \\
     & =   \sum_{k\in \mathbb{N}} \lambda_k^{1/2}\int_0^t \langle\bigg(\int_r^t (-\Delta)^{-1/2}\sin((-\Delta)^{1/2}(s-r)) ds\bigg) \sigma(u(r))e_k,-\Delta \phi\rangle_{L^2(\lambda_D )} dW_k(r) \\
     & =  \Big\langle \sum_{k\in \mathbb{N}} \lambda_k^{1/2}\int_0^t \bigg(\int_r^t (-\Delta)^{-1/2}\sin((-\Delta)^{1/2}(s-r)) ds\bigg) \sigma(u(r))e_k dW_k(r),-\Delta \phi\Big\rangle_{L^2(\lambda_D )} \\
     & =  \Big\langle \int_0^t \bigg(\int_r^t (-\Delta)^{-1/2}\sin((-\Delta)^{1/2}(s-r)) ds\bigg) \sigma(u(r))dW(r),-\Delta \phi\Big\rangle_{L^2(\lambda_D )} \\
     & =  \Big\langle \int_0^t \int_0^s 
(-\Delta)^{-1/2}\sin((-\Delta)^{1/2}(s-r))\sigma(u(r))dW(r)ds,-\Delta\phi\Big\rangle_{L^2(\lambda_D )} \\
     & =  \Big\langle \bigg(\partial_0^{-1} \int_0^\cdot 
(-\Delta)^{-1/2}\sin((-\Delta)^{1/2}(\cdot-r))\sigma(u(r))dW(r)\bigg)(t),-\Delta\phi\Big\rangle_{L^2(\lambda_D )}
     \\ & = \langle \partial_0^{-1}u (t), -\Delta \phi\rangle_{L^2(\lambda_D )},
\end{align*}
where in the last equality we used the first line of \eqref{eq:SGu}. We read off $\partial_0^{-1}u \in \dom(\Delta)$ and
\[
   w=-\Delta\partial_0^{-1}u
\]
Moreover, we compute with \eqref{eq:SGu} and Lemma \ref{l:spf},
\begin{align*}
   \partial_0^{-1} v(t) & = \int_0^t \int_0^s 
\cos((-\Delta)^{1/2}(s-r))\sigma(u(r))dW(r) d s
\\ & = \int_0^t \int_r^t  \cos((-\Delta)^{1/2}(s-r)) ds \sigma(u(r))dW(r)
\\ & = \int_0^t (-\Delta)^{-1/2}\sin((-\Delta)^{1/2}(t-r)) \sigma(u(r))dW(r)
\\ & = u(t).
\end{align*} Therefore, together with \eqref{eq:w}, we get
\begin{align*}
 \partial_0 u + w & = v+ w = \int_0^\cdot \sigma(u(r))dW(r),\\
 w + \Delta \partial_0^{-1} u & = 0.
\end{align*}
So, again by multiplying both these equations with $(1+\eps\partial_0)^{-1}$ and setting $u_\eps\coloneqq (1+\eps\partial_0)^{-1}u$ as well as $w_\eps\coloneqq (1+\eps\partial_0)^{-1}w$, we obtain
\[
   \left(\partial_0 \begin{pmatrix} 1 & 0 \\ 0 & 1 \end{pmatrix} + \begin{pmatrix} 0 & 1 \\ \Delta & 0 \end{pmatrix}\right)\begin{pmatrix}
                                                                                                                            u_\eps\\ w_\eps 
                                                                                                                           \end{pmatrix} = \begin{pmatrix}
                                                                                                                            (1+\eps\partial_0)^{-1}\int_0^\cdot \sigma(u(r))dW(r)\\ 0
                                                                                                                           \end{pmatrix}.
\]
Hence, by letting $\eps\to 0$, we obtain the assertion.
\end{proof}

\subsection{Stochastic Schr\"odinger equation with additive noise}
In this section we treat the stochastic Schr\"odinger equation on an open set $D\subseteq \Rd$, see for instance \cite[Chapter 2]{barchielli}. It can be formulated as
\[
   \partial_0 u -\ii\Delta u = b(u)+\partial_0 X,\quad u(0)=0,
\]
with appropriate boundary conditions such that $\Delta$ becomes a self-adjoint operator (recall that then $\ii \Delta$ is skew-selfadjoint) and
\[ b\colon H_{\nu,-1}(\R;L^2(\lambda_D )\otimes L^2(\P))\to H_{\nu,-1}(\R;L^2(\lambda_D )\otimes L^2(\P))\]
being Lipschitz continuous with Lipschitz constant less than $\nu$. We assume that $\partial_0 X$ is the derivative of a stochastic process as discussed in Section \ref{sec:additive}. Then the stochastic Schr\"odinger equation is well-posed according to Theorem \ref{thm:solth_add}.

\subsection{Stochastic Maxwell Equations}
Before discussing the stochastic Maxwell equations, we need to introduce some vector-analytic operators. In the whole section let $D\subseteq \R^3$ be open. 

\begin{defn}
We define
\begin{align*}
   \curl_c \colon C_c^\infty(D)^3\subseteq L^2(\lambda_D )^3 & \to L^2(\lambda_D )^3,\\
						\begin{pmatrix}\phi_1\\ \phi_2 \\ \phi_3\end{pmatrix} & \mapsto 
						\begin{pmatrix}0 & -\partial_3 &  \partial_2 \\
						              \partial_3 & 0 &  -\partial_1 \\ 
						              -\partial_2 & \partial_1 & 0 \end{pmatrix}\begin{pmatrix}\phi_1\\ \phi_2 \\ \phi_3\end{pmatrix},
\end{align*}
where $\partial_1,\partial_2,\partial_3$ are the partial derivatives with respect to the first, second and third spatial variable, respectively. Let $\curl\coloneqq \curl_c^*$ and $\interior\curl\coloneqq \curl^*$.   
\end{defn}

We introduce the linear operators $\epsilon,\mu,\zeta\in L(L^2(\lambda_D )^3)$, modeling the respective material coefficients dielectricity, magnetic permeability and electric conductivity, with the following additional properties
\begin{itemize}
  \item $\epsilon$ is self-adjoint and positive definite, $\epsilon^*=\epsilon\geq 0$,
  \item $\mu$ is self-adjoint, $\mu^*=\mu$, 
  \item both the operators $\mu$ and $\nu \epsilon+\Re \zeta$ are strictly positive definite if $\nu>0$ is chosen large enough.
\end{itemize}

Then Maxwell's equations can be written in the form
\[
   \left(\partial_0 \begin{pmatrix}
                \epsilon & 0 \\ 0 & \mu 
              \end{pmatrix} + \begin{pmatrix}
                \zeta & 0 \\ 0 & 0
              \end{pmatrix} + \begin{pmatrix}
                0& -\curl \\ \interior\curl & 0 
              \end{pmatrix}\right) \begin{pmatrix}
              E \\ H\end{pmatrix} = \begin{pmatrix}
              J \\ 0\end{pmatrix}.
\]
This first-order system is well-posed in solving for $(E,H)\in H_{\nu,0}(\R;L^2(\lambda_D )^6)$, where the quantity $J\in H_{\nu,0}(\R;L^2(\lambda_D )^3)$, the external currents, is a given right-hand side. Indeed, this follows from our deterministic solution theory in Theorem \ref{thm:solth}, see \cite[Section 3.1.12.4]{picardbook} for a detailed treatment.  Hence, incorporating stochastic integrals in the Maxwell equations leads to 
\[
   \left(\partial_0 \begin{pmatrix}
                \epsilon & 0 \\ 0 & \mu 
              \end{pmatrix} + \begin{pmatrix}
                \zeta & 0 \\ 0 & 0 
              \end{pmatrix} + \begin{pmatrix}
                0& -\curl \\ \interior\curl & 0 
              \end{pmatrix}\right) \begin{pmatrix}
              E \\ H\end{pmatrix} = \begin{pmatrix}
              \int_0^{(\cdot)}\sigma(E,H)dW +J\\ 0\end{pmatrix},
\]
which in turn is well-posed by Theorem \ref{thm:spdes}.

\begin{rem}
 Note that the Maxwell equations with multiplicative noise have not been -- to the best of our knowledge -- discussed yet in the literature. The above formulation of this particular reformulation is in fact a possible way to understand the `stochastic Maxwell equations with multiplicative noise'. Stochastic Maxwell equations with additive noise have, however, been discussed in the literature, see for instance \cite{CJZ16,HSY10,JLZ14}. A solution theory for this line of problem can again be found in Section \ref{sec:additive}.
\end{rem}

\subsection{SPDEs with fractional time derivatives}
Due to the generality of our ansatz with respect to the freedom in the operator coefficient $M(\partial_0^{-1})$, we may also treat stochastic partial differential equations with fractional time derivatives. As an instant, let us consider the following super-diffusion equation for $\alpha\in (0,1)$:
\begin{align*}
  \partial_0^{1+\alpha}u - \Delta u & = \sigma(u)\dot W,
\end{align*}
subject to zero initial and, for instance, homogeneous Neumann boundary conditions in an open set $D\subseteq \Rd$. Note that we can incorporate these homogeneous boundary conditions in the formulation of the abstract setting (without regards to the smoothness of the boundary of $D$) in the way that $\Delta\coloneqq \interior{\div}\grad$, where $\interior{\div}$ carries the homogeneous Neumann boundary conditions. As in the previous sections, we define an auxiliary unknown $v\coloneqq -\grad \partial_0^{-1}u$ and get the following system
\[
  \begin{pmatrix} 
     \partial_0^{\alpha} & 0 \\ 0 & \partial_0
  \end{pmatrix}
  \begin{pmatrix} 
     u \\ v
  \end{pmatrix}
  +
  \begin{pmatrix} 
     0 & \interior{\div} \\ \grad & 0
  \end{pmatrix}
  \begin{pmatrix} 
     u \\ v
  \end{pmatrix}
  =
  \begin{pmatrix} 
     \int_0^{(\cdot)}\sigma(u)dW \\ 0
  \end{pmatrix}
\]
as the appropriate formulation for the stochastic super-diffusion equation discussed above. Recall that the part with the time derivative is given by $\partial_0M(\partial_0^{-1})$, where here $M$ is given by
\[
   M(z) \coloneqq \begin{pmatrix}
                   z^{\alpha-1} & 0 \\ 0 & 1
                  \end{pmatrix}.
\]
It can be shown that this $M$ satisfies the condition of strict positive definiteness for all $z\in B(r,r)$ for all $r>0$ in \eqref{eq:condition_M}, see \cite[Lemma 2.1 or Theorem 3.5]{Drrerfrac}. Hence, Theorem \ref{thm:spdes} is applicable and well-posedness is established.

\begin{rem}
  Of course one can think of more complicated equations containing fractional (time) derivatives. For other possible equations containing fractional (time) derivatives, we refer to \cite{homfrac,Drrerfrac} and the references therein. In order to limit the extend of this exposition, we postpone a more detailed survey of fractional stochastic partial differential equation to future work.
\end{rem}

\subsection{An equation of mixed type}\label{s:mix}

In the following we demonstrate the usefulness of the approach presented by applying our main theorem to an equation of mixed type. We refer to the textbooks \cite{B64,R90} as standard references for equations of mixed type. In these references, the authors also sketch a link to real world applications such as problems in fluid or gas dynamics. Furthermore, we refer to the eddy current approximation in electrodynamics, which forms a mixed type problem changing its type from hyperbolic to parabolic on different space-time domains, see \cite{PP16}. In order to provide a simple example, we discuss the following model problem. For this, let ${D}\subseteq \mathbb{R}^d$ be an open set, ${D}_{\textnormal{e}},{D}_{\textnormal{p}},{D}_{\textnormal{h}}\subseteq {D}$ pairwise disjoint and measurable. Assume that ${D}_{\textnormal{e}}\cup {D}_{\textnormal{p}}\cup {D}_{\textnormal{h}}={D}$. On $H_{\nu,0}(\mathbb{R};L^2(\lambda_{D})\oplus L^2(\P))$ consider the equation of mixed type
\begin{equation}\label{eq:mt}
   \left(\partial_{0} \begin{pmatrix}
                 \1_{{D}_{\textnormal{h}}} & 0 \\\ 0 & \1_{{D}_{\textnormal{p}}}+\1_{{D}_{\textnormal{h}}} 
                \end{pmatrix} + \begin{pmatrix}
                 \1_{{D}_{\textnormal{p}}}+\1_{{D}_{\textnormal{e}}} & 0 \\\ 0 & \1_{{D}_{\textnormal{p}}}+\1_{{D}_{\textnormal{e}}}
                \end{pmatrix}- \begin{pmatrix}
                 0 & \div \\ \interior{\grad} & 0 \end{pmatrix}\right)\begin{pmatrix} u \\ q \end{pmatrix} =\begin{pmatrix} F \\ 0 \end{pmatrix}.
\end{equation}
If $F=\int_0^{(\cdot)}\sigma(u)d W$ for some suitable $\sigma$ and a Wiener process $W$, we are in the position of applying Theorem \ref{thm:spdes}. Indeed, note that for all $\nu>0$ the operator family
\[
   M(z) = \begin{pmatrix}
                 \1_{{D}_{\textnormal{h}}} & 0 \\\ 0 & \1_{{D}_{\textnormal{p}}}+\1_{{D}_{\textnormal{h}}} 
                \end{pmatrix} +  z\begin{pmatrix}
                 \1_{{D}_{\textnormal{p}}}+\1_{{D}_{\textnormal{e}}} & 0 \\\ 0 & \1_{{D}_{\textnormal{p}}}+\1_{{D}_{\textnormal{e}}}\end{pmatrix}\quad (z\in B(r,r), r>1/(2\nu))
\]
satisfies the positive definiteness condition of Theorem \ref{thm:spdes}. Note that Equation \eqref{eq:mt} is indeed an equation of mixed type: On ${D}_\textnormal{h}$ the equation admits the form of the stochastic wave equation as in Section \ref{sec:SWE}. On ${D}_{\textnormal{p}}$, Equation \eqref{eq:mt} admits the form of the one in Section \ref{sec:SHE}, which is the stochastic heat equation (one has to put $q=\partial_0^{-1}\interior{\grad}$). The equation under consideration in this section is of elliptic type on the set ${D}_{\textnormal{e}}$. 

We note here that it is not needed to implement transmission conditions on the interfaces $\partial{D}_{\textnormal{e}}\cap {D}$, $\partial{D}_{\textnormal{p}}\cap {D}$, and $\partial{D}_{\textnormal{h}}\cap {D}$ additionally. In fact, the condition of $(u,q)$ being in the domain of $\overline{\left(\partial_{0} \begin{pmatrix}
                 \1_{{D}_{\textnormal{h}}} & 0 \\\ 0 & \1_{{D}_{\textnormal{p}}}+\1_{{D}_{\textnormal{h}}} 
                \end{pmatrix} + \begin{pmatrix}
                 \1_{{D}_{\textnormal{p}}}+\1_{{D}_{\textnormal{e}}} & 0 \\\ 0 & \1_{{D}_{\textnormal{p}}}+\1_{{D}_{\textnormal{e}}}
                \end{pmatrix}- \begin{pmatrix}
                 0 & \div \\ \interior{\grad} & 0 \end{pmatrix}\right)}$ is the appropriate realization of transmission conditions, see also the treatment of a mixed type problem in \cite[Remark 3.2]{Waurick2016_StH}. 
                 
It remains unclear of how to solve the equation in question with classical methods. In particular, if one is to use the semi-group approach, it is unclear of how to define an appropriate semi-group. 

\section{Conclusion}\label{s:con}

We presented an attempt for a unified solution theory for a class of stochastic partial differential equations. The concept is an adaption of the deterministic solution theory developed in \cite{PicPhy} and, thus, it applies to various physical phenomena. More precisely, we perturbed the deterministic equation by a stochastic right-hand side. This right-hand side turned out to be Lipschitz continuous since the solution operator of the deterministic PDE leaves -- thanks to causality -- predictable processes invariant.

For particular cases, we demonstrated that the solutions derived coincide with `variational solutions' or `mild solutions'. However, we emphasize that -- even in the deterministic setting -- the solution concept developed is different to the semi-group approach. On the one hand, even though the solution theory in Theorem \ref{thm:solth} may be extended to closed densely defined operators $A$ satisfying $\Re\langle A\phi,\phi\rangle, \Re\langle A^*\psi,\psi\rangle\geq 0$ for all $\phi\in \dom(A)$, $\psi\in \dom(A^*)$, the solution theory given by Theorem \ref{thm:solth} does not extend to all equations which are covered by semi-group-methods as the latter may be carried over to the Banach space case. Thus, there are equations that may be solved via the semi-group method, that cannot be solved with the approach presented here. On the other hand, there are also equations that are not covered by semi-groups, which nonetheless fall into the class of evolutionary equations, see, for instance, \cite{Waurick2016_StH} or \cite{Controlleti,Controlleti2,Waurick2014MMAS_Non,PTWW}. 

The main application of the present results maybe to derive a solution concept for (S)PDEs when the semi-group approach fails and the existence of a semi-group (sufficiently regular fundamental solution) cannot be shown. In particular, if one is confronted with equations of mixed type, see Section \ref{s:mix}, the present approach may be advanced. Further applications can be found in differential-algebraic system as in control theory, see \cite{Controlleti,Controlleti2}. Moreover, the current approach may open the doors for solution concepts for SPDEs, whilst imposing rather mild (if any) conditions on the regularity of the coefficients or the boundary of the underlying spatial domain.

\bibliographystyle{abbrv}

\end{document}